\newcommand\numberthis{\addtocounter{equation}{1}\tag{\theequation}}
\newcommand{\norm}[1]{\left\lVert#1\right\rVert}
\newcommand{\mat}[1]{\underline{\mathbf{#1}}}
\newcommand\acclrvec[1]{\accentset{\,\leftrightarrow}{#1}}	
\newcommand{\blocktensor}[1]{\acclrvec{{\mathbf #1}}}	
\newcommand\blocktensorG[1]{\acclrvec{\boldsymbol #1}}	
\newcommand{\Nabla} {\vec{\nabla}}
\newcommand{\numfluxb}[1]{\hat{\mathbf{#1}} }
\newcommand{\numflux}[1]{\hat{{#1}} }
\newcommand\threeMatrix[1]{\underline{ #1}}				
\def\d{\mathrm{d}}
\newcommand{\bigpartialderiv}[2]{ \frac{\partial {#1}}{\partial {#2} } }
\newcommand\stateG[1]{\boldsymbol #1}			
\newcommand{\noncon}{\stateG{\Upsilon}}	
\newcommand{\entVar}{{\mathbf{v}}}
\newcommand\state[1]{\mathbf{#1}}
\newcommand{\supEuler}{{\mathrm{Euler}}}
\newcommand{\supMHD}{{\mathrm{MHD}}}
\newcommand{\supGLM}{{\mathrm{GLM}}}
\newcommand{\DG}{{\mathrm{DG}}}
\newcommand{\Jan}{\stateG{\Phi}}
\newcommand{\JanVec}{\blocktensorG{\Phi}}
\newcommand{\phiMHD}{\stateG{\phi}^\supMHD}
\newcommand{\phiGLM}{\blocktensorG{\phi}^\supGLM}
\newcommand{\phiGLMs}{\stateG{\phi}^\supGLM}
\newcommand{\numnonconsD}[1]{ #1^{\Diamond} }
\newcommand{\avg}[1]{\left\{\hspace*{-3pt}\left\{#1\right\}\hspace*{-3pt}\right\}}
\newcommand{\jump}[1]{\ensuremath{\left\llbracket #1 \right\rrbracket}}
\newcommand{\numnonconsDxi}[1]{ #1^{1\Diamond} }
\newcommand{\numnonconsDeta}[1]{#1^{2\Diamond} }
\newcommand{\numnonconsDzeta}[1]{ #1^{3\Diamond} }
\newcommand{\numnonconsSxi}[1]{ #1^{1\star} }
\newcommand\eqLGL{\mathrel{\stackrel{\makebox[0pt]{\mbox{\normalfont\tiny LGL}}}{=}}}
\newtheorem{lemma}{Lemma}
\newtheorem{remark}{Remark}
\begin{document}

\let\WriteBookmarks\relax
\def\floatpagepagefraction{1}
\def\textpagefraction{.001}

\shorttitle{Entropy-Stable Gauss Collocation Methods for Ideal MHD}
\shortauthors{Rueda-Ramírez et~al.}

\title [mode = title]{Entropy-Stable Gauss Collocation Methods for Ideal Magneto-Hydrodynamics}

\author[1]{Andrés M. Rueda-Ramírez}[
                        orcid=0000-0001-6557-9162]
\cormark[1]
\ead{aruedara@uni-koeln.de}

\credit{Conceptualization, Methodology, Software, Validation, Formal analysis, Data Curation, Writing - Original Draft, Visualization}

\author[2]{Florian J. Hindenlang}
[orcid=0000-0002-0439-249X]
\credit{Methodology, Software, Formal analysis, Writing - Original Draft}

\author[3]{Jesse Chan}
\credit{Conceptualization, Methodology, Formal analysis, Writing - Original Draft}

\author[1,4]{Gregor J. Gassner}
[orcid=0000-0002-1752-1158]
\credit{Conceptualization, Methodology, Validation, Formal analysis, Data Curation, Writing - Original Draft}

\address[1]{Department of Mathematics and Computer Science, University of Cologne, Weyertal 86-90, 50931 Cologne, Germany}

\address[2]{Max Planck Institute for Plasma Physics, Boltzmannstraße 2, 85748 Garching, Germany}

\address[3]{Department of Computational and Applied Mathematics, Rice University, 6100 Main St, Houston, TX, 77005}

\address[4]{Center for Data and Simulation Science, University of Cologne, 50931 Cologne, Germany}

\cortext[cor1]{Corresponding author}

\begin{abstract}
In this paper, we present an entropy-stable Gauss collocation discontinuous Galerkin (DG) method on 3D curvilinear meshes for the GLM-MHD equations: the single-fluid magneto-hydrodynamics (MHD) equations with a generalized Lagrange multiplier (GLM) divergence cleaning mechanism.
For the continuous entropy analysis to hold and to ensure Galilean invariance in the divergence cleaning technique, the GLM-MHD system requires the use of non-conservative terms.

Traditionally, entropy-stable DG discretizations have used a collocated nodal variant of the DG method, also known as the discontinuous Galerkin spectral element method (DGSEM) on Legendre-Gauss-Lobatto (LGL) points.
Recently, \citet["Efficient Entropy Stable Gauss Collocation Methods". SIAM (2019)]{chan2019efficient} presented an entropy-stable DGSEM scheme that uses Legendre-Gauss points (instead of LGL points) for conservation laws.
Our main contribution is to extend the discretization technique of \citeauthor{chan2019efficient} to the non-conservative GLM-MHD system.

We provide a numerical verification of the entropy behavior and convergence properties of our novel scheme on 3D curvilinear meshes.
Moreover, we test the robustness and accuracy of our scheme with a magneto-hydrodynamic Kelvin-Helmholtz instability problem.
The numerical experiments suggest that the entropy-stable DGSEM on Gauss points for the GLM-MHD system is more accurate than the LGL counterpart.
\end{abstract}


\begin{highlights}
\item We propose a novel entropy-stable discontinuous Galerkin (DG) method on Gauss nodes for the GLM-MHD equations.
\item The entropy-stable Gauss DG discretization shows better accuracy than the Gauss-Lobatto (LGL) discretization
\item We reformulate the existing LGL discretization of the GLM-MHD system to fit a general framework that includes the Gauss and LGL methods.
\end{highlights}

\begin{keywords}
Compressible Magnetohydrodynamics,
Entropy Stability,
Discontinuous Galerkin Spectral Element Methods,
Gauss nodes
\end{keywords}

\maketitle

\section{Introduction}

The ideal magnetohydrodynamics (MHD) equations are a set of partial differential equations (PDEs) that describe the behavior of electrically conducting compressible fluids (plasmas), which find applications in numerous fields, e.g., space physics, plasma physics, astrophysics, and geophysics, among others.
The ideal MHD system is a combination of the compressible Euler equations of fluid dynamics with Maxwell's equations of electromagnetism.
There are two relevant physical constraints to the ideal MHD equations that are not explicitly built into the partial differential equations:
\begin{enumerate}
    \item The divergence-free condition on the magnetic field, $\Nabla \cdot \vec{B} = 0$, which rules out the existence of magnetic monopoles.
    \item The second law of thermodynamics, which states that the thermodynamic entropy of a closed system can only increase or remain constant in time.
\end{enumerate}

Since these constraints are not explicitly built into the MHD equations, numerical discretizations of the MHD system must take additional considerations to guarantee their fulfillment.

There is an extensive collection of techniques in the literature to deal with the divergence-free constraint in numerical discretizations of the ideal MHD system.
In this work, we employ the divergence cleaning method proposed by \citet{Munz2000} and \citet{Dedner2002}, which expands the MHD system with a generalized Lagrange multiplier (GLM), which is advected and damped to minimize the divergence error.
The expanded system of partial differential equations is known as the GLM-MHD system.
Since the GLM technique minimizes (but does not remove completely) the divergence error, the GLM-MHD system includes non-conservative terms, which arise from Maxwell's equations when $\Nabla \cdot \vec{B} \ne 0$.
These non-conservative terms symmetrize the system of PDEs and are necessary to fulfill the second law of thermodynamics with non-vanishing divergence errors \cite{Derigs2018,Chandrashekar2016}.

Discontinuous Galerkin (DG) methods offer a straightforward way to discretize PDEs with arbitrarily high-order accuracy.
We are interested in high-order DG methods because they are robust for advection-dominated problems, readily parallelizable because of their compact stencils \cite{Hindenlang2012}, flexible for complex 3D unstructured grids \cite{Wang2013High,Cockburn2000}, and well suited to perform $h/p$ adaptation  \cite{Riviere2008,Kopriva2002,RuedaRamirez2019a,
RuedaRamirez2019}.

Following the work of \citet{Gassner2013}, \citet{Fisher2013} and \citet{Carpenter2014}, split-form DG methods that fulfill the second law of thermodynamics have been proposed, e.g., for the shallow water equations \cite{wintermeyer2017entropy}, the incompressible Navier-Stokes equations \cite{manzanero2020entropy}, the compressible Navier-Stokes equations \cite{Gassner2018}, multi-phase fluid equations \cite{Renac2019,manzanero2020entropy2}, the GLM-MHD system \cite{Bohm2018}, among others.
These entropy-stable methods rely on a nodal (collocated) variant of the DG method that fulfills the summation by parts (SBP) property, also known as the discontinuous Galerkin spectral element method (DGSEM) on Legendre-Gauss-Lobatto (LGL) points and a so-called flux-differencing representation of the fluxes and non-conservative terms. 
With a careful selection of the numerical fluxes, the LGL-DGSEM scheme becomes provably entropy stable, i.e. consistent with the second law of thermodynamics, which provides additional nonlinear stability.

Recently, entropy-stable and kinetic-energy-preserving collocation DG schemes have also been constructed on Gauss (instead of LGL) quadrature points \cite{chan2019efficient,chan2019discretely,ortleb2016kinetic,ortleb2017kinetic}.
The difference is that Gauss-based operators are not classic SBP operators, but satisfy a generalized SBP property \cite{del2019extension,hicken2016multidimensional}.
The main motivation to do so is that the Gauss quadrature provides better accuracy and robustness than the LGL quadrature.
Of particular interest is the work of \citet{chan2019efficient}, as it introduces an efficient entropy-stable DGSEM scheme that does not need an "all-to-all" coupling of the degrees of freedom for the evaluation of the numerical fluxes.

In this paper, we extend the entropy-stable Gauss collocation method of \citet{chan2019efficient} to the GLM-MHD non-conservative system.
The main contributions of this paper are: (i) we rewrite the entropy-stable Gauss-DGSEM method of \citet{chan2019efficient} in an element-local form, which facilitates its implementation, and recreate the entropy conservation/stability proof deriving the exact entropy production terms; (ii) we develop a novel provably entropy conservative/stable Gauss-DGSEM discretization of the GLM-MHD equations that retains high-order accuracy and obtains more accurate results than its LGL counterpart; (iii) we reformulate the entropy-stable LGL-DGSEM discretization of the GLM-MHD equations proposed by \citet{Bohm2018} using a single volume numerical non-conservative term;  
and (iv) we provide a numerical verification of the methods and apply them in an under-resolved MHD turbulence simulation.

The paper is organized as follows. 
In Section \ref{sec:NotationPhysics}, we briefly describe the notation and introduce the GLM-MHD system.
In Section \ref{sec:ESDGSEMs}, we provide a brief literature review on existing entropy-stable discontinuous Galerkin discretizations on LGL points for conservation laws and for the non-conservative GLM-MHD system.
Next, in Section \ref{sec:Gauss_DGSEM}, we discuss the novel entropy-stable Gauss-DGSEM discretizations. 
First, rewrite the entropy-stable Gauss-DGSEM of \citet{chan2019efficient} in an element-local form.
Next, we introduce the entropy-stable Gauss-DGSEM of the non-conservative GLM-MHD system, and show its extension to three-dimensional unstructured and non-conforming curvilinear meshes.
Finally, the numerical verification and validation of the methods is presented in Section \ref{sec:Results} and the conclusions are presented in Section \ref{sec:Conclusions}.

\section{Notation and Governing Equations} \label{sec:NotationPhysics}

\subsection{Notation} \label{sec:notation}
We adopt the notation of \cite{Bohm2018,Gassner2018,RuedaRamirez2020,Rueda-Ramirez2020} to work with vectors of different nature. 
Spatial vectors are noted with an arrow (e.g. $\vec{x}=(x,y,z) \in \mathbb{R}^3$), state vectors are noted in bold (e.g. $\state{u}=(\rho, \rho \vec{v}, \rho E, \vec{B}, \psi)^T$), and block vectors, which contain a state vector in every spatial direction, are noted as
\begin{equation}
\blocktensor{f} =
\begin{bmatrix}
\mathbf{f}_1 \\ 
\mathbf{f}_2 \\
\mathbf{f}_3
\end{bmatrix}.
\end{equation}

The gradient of a state vector is a block vector,
\begin{equation}
\Nabla \mathbf{q} =
\begin{bmatrix}
\partial_x \mathbf{q} \\
\partial_y \mathbf{q} \\
\partial_z \mathbf{q} 
\end{bmatrix},
\end{equation}
and the gradient of a spatial vector is defined as the transpose of the outer product with the nabla operator,
\begin{equation}
\Nabla \vec{v} := 
\left( \Nabla \otimes \vec{v} \right)^T = 
\left( \Nabla \vec{v}^T \right)^T = 
\mat{L} =
\begin{bmatrix}
\bigpartialderiv{v_1}{x} & \bigpartialderiv{v_1}{y} & \bigpartialderiv{v_1}{z} \\
\bigpartialderiv{v_2}{x} & \bigpartialderiv{v_2}{y} & \bigpartialderiv{v_2}{z} \\
\bigpartialderiv{v_3}{x} & \bigpartialderiv{v_3}{y} & \bigpartialderiv{v_3}{z}
\end{bmatrix},
\end{equation}
where we remark that we note general matrices with an underline.

We define the notation for the jump operator, arithmetic and logarithmic means between a left and right state, $a_L$ and $a_R$, as
\begin{equation}
\jump{a}_{(L,R)} := a_R-a_L, 
~~~~~~~~~ 
\avg{a}_{(L,R)} := \frac{1}{2}(a_L+a_R), 
~~~~~~~~ 
a^{\ln}_{(L,R)} := \jump{a}_{(L,R)}/\jump{\ln(a)}_{(L,R)}.
\label{means}
\end{equation}
A numerically stable procedure to evaluate the logarithmic mean is given in \cite{Ismail2009}.

\subsection{The Ideal GLM-MHD Equations} \label{sec:GLM-MHD}

\subsubsection{The System of Equations}

In this work, we use the variant of the ideal GLM-MHD equations that is consistent with the continuous entropy analysis of \citet{Derigs2018}.
The system of equations reads 
\begin{equation} \label{eq:GLM-MHD}
\partial_t \mathbf{u} 
+ \Nabla \cdot \blocktensor{f}^a (\mathbf{u}) 
+ \noncon(\mathbf{u}, \Nabla \mathbf{u})
= \state{0},
\end{equation}
with the state vector $\state{u} = (\rho, \rho \vec{v}, \rho E, \vec{B}, \psi)^T$, the advective flux $\blocktensor{f}^a$, 
and the non-conservative term $\noncon$.
Here, $\rho$ is the density, $\vec{v} = (v_1, v_2, v_3)^T$ is the velocity, $E$ is the specific total energy, $\vec{B} = (B_1, B_2, B_3)^T$ is the magnetic field, and $\psi$ is the so-called \textit{divergence-correcting field}, a generalized Lagrange multiplier (GLM) that is added to the original MHD system to minimize the magnetic field divergence. 
While these equations do not enforce the divergence-free condition exactly, $\Nabla \cdot \vec{B} = 0$, they evolve towards a divergence-free state \cite{Munz2000,Dedner2002,Derigs2018}.

The advective flux contains Euler, ideal MHD and GLM contributions,
\begin{equation}
\blocktensor{f}^a(\mathbf{u}) = \blocktensor{f}^{a,\supEuler} +\blocktensor{f}^{a,\supMHD}+\blocktensor{f}^{a,\supGLM}=
\begin{pmatrix} 
\rho \vec{v} \\[0.15cm]
\rho (\vec{v}\, \vec{v}^{\,T}) + p\threeMatrix{I} \\[0.15cm]
\vec{v}\left(\frac{1}{2}\rho \left\|\vec{v}\right\|^2 + \frac{\gamma p}{\gamma -1}\right)  \\[0.15cm]
\threeMatrix{0}\\ \vec{0}\\[0.15cm]
\end{pmatrix} +
\begin{pmatrix} 
\vec{0} \\[0.15cm]
\frac{1}{2 \mu_0} \|\vec{B}\|^2 \threeMatrix{I} - \frac{1}{\mu_0} \vec{B} \vec{B}^T \\[0.15cm]
\frac{1}{\mu_0} \left( \vec{v}\,\|\vec{B}\|^2 - \vec{B}\left(\vec{v}\cdot\vec{B}\right) \right) \\[0.15cm]
\vec{v}\,\vec{B}^T - \vec{B}\,\vec{v}^{\,T} \\ \vec{0}\\[0.15cm]
\end{pmatrix} +
\begin{pmatrix} 
\vec{0} \\[0.15cm]
\threeMatrix{0} \\[0.15cm]
\frac{c_h}{\mu_0} \psi \vec{B} \\[0.15cm]
c_h \psi \threeMatrix{I} \\ c_h \vec{B}\\[0.15cm]
\end{pmatrix},
\label{eq:advective_fluxes}
\end{equation}
where $p$ is the gas pressure, $\threeMatrix{I}$ is the $3\times 3$ identity matrix, $\mu_0$ is the permeability of the medium, and $c_h$ is the \textit{hyperbolic divergence cleaning speed}.

We close the system with the (GLM) calorically perfect gas assumption \cite{Derigs2018},
\begin{equation}
p = (\gamma-1)\left(\rho  E - \frac{1}{2}\rho\left\|\vec{v}\right\|^2 - \frac{1}{2 \mu_0}\|\vec{B}\|^2 - \frac{1}{2 \mu_0}\psi^2\right),
\label{eqofstate}
\end{equation}
where $\gamma$ denotes the heat capacity ratio.

The non-conservative term has two main components, $\noncon = \noncon^\supMHD + \noncon^\supGLM$, with
\begin{align}
\noncon^\supMHD &= (\Nabla \cdot \vec{B}) \phiMHD =  \left(\Nabla \cdot \vec{B}\right) 
\left( 0 \,,\, \mu_0^{-1} \vec{B} \,,\, \mu_0^{-1} \vec{v}\cdot\vec{B} \,,\,  \vec{v} \,    ,\, 0 \right)^T \,, \label{Powell}\\ 
\noncon^\supGLM &= \phiGLM \cdot \Nabla \psi \quad =   \phiGLMs_1 \,\frac{\partial \psi}{\partial x} + \phiGLMs_2 \frac{\partial \psi}{\partial y} + \phiGLMs_3 \frac{\partial \psi}{\partial z} \,,\label{NC_GLM}
\end{align}
where $\phiGLM$ is a block vector with
\begin{equation}\label{Galilean}
\phiGLMs_\ell = \mu_0^{-1} \left(0 \,,\, 0\,,\,0\,,\,0\,,\,  v_\ell \psi \,,\, 0\,,\,0\,,\,0\,,\, v_\ell \right)^T, \quad \ell = 1,2,3.
\end{equation} 
The first non-conservative term, $\noncon^\supMHD$, is the well-known Powell term \cite{Powell2001}, and the second non-conservative term, $\noncon^\supGLM$, results from Galilean invariance of the full GLM-MHD system \cite{Derigs2017}. 

We note that for a magnetic field with vanishing divergence, $\Nabla \cdot \vec{B} = 0$, equation \eqref{eq:GLM-MHD} reduces to the conservative ideal MHD equations, which describe the conservation of mass, momentum, energy, and magnetic flux.

\subsubsection{Thermodynamic Properties of the System}

Assuming positive density and pressure, $\rho,p>0$, we obtain a strictly convex mathematical entropy function for the ideal GLM-MHD equations,
\begin{equation}
S(\state{u}) = - \frac{\rho s}{\gamma-1},
\label{entropy}
\end{equation}
where $s = \ln\left(p \rho^{-\gamma}\right)$ is the thermodynamic entropy.
From the entropy function, we define the entropy variables,
\begin{equation}
\entVar = \frac{\partial S}{\partial \state{u}} = \left(\frac{\gamma-s}{\gamma-1} - \beta \norm{\vec{v}}^2,~2\beta v_1,~2\beta v_2,~2\beta v_3,~-2\beta,~2\beta B_1,~2\beta B_2,~2\beta B_3,~2\beta \psi\right)^T,
\label{entrvars}
\end{equation}
with $\beta = \frac{\rho}{2p}$, a quantity that is proportional to the inverse temperature.

\citet{Derigs2017} showed that if we contract \eqref{eq:GLM-MHD} with the entropy variables, we obtain an entropy conservation law,
\begin{equation}\label{eq:EntropyConservationLaw}
 \frac{\partial S}{\partial t} + \Nabla \cdot \vec{f}^{\,S} = 0,
\end{equation}
if the solution is smooth, and an entropy inequality,
\begin{equation}\label{eq:EntropyInequality}
 \frac{\partial S}{\partial t} + \Nabla \cdot \vec{f}^{\,S} \le 0,
\end{equation}
if the solution contains discontinuities.
Here, $\vec{f}^{\,S} = \vec{v} S$ is the so-called entropy flux.

Finally, we define the entropy flux potential to be
\begin{equation}\label{eq:entPotential}
\vec{\Psi} := \entVar^T \blocktensor{f}^a -\vec{f}^S +\theta \vec{B},
\end{equation}
where $\theta$ is the contraction of $\phiMHD$ from the Powell term \eqref{Powell} into entropy space,
\begin{equation}\label{eq:contractSource}
\theta = \entVar^T \phiMHD = 2\beta(\vec{v}\cdot\vec{B}). 
\end{equation}

\subsubsection{One-Dimensional MHD System}

To simplify the analysis of the GLM-MHD system, we write a one-dimensional version of \eqref{eq:GLM-MHD},
\begin{equation} \label{eq:GLM-MHD_1D}
\bigpartialderiv {\mathbf{u}}{t}
+ \bigpartialderiv {\state{f}^a} {x}
+ \noncon_1
= \state{0},
\end{equation}
where the state variable is $\state{u} = (\rho, \rho \vec{v}, \rho E, \vec{B}, \psi)^T$, as before, and the advective flux in $x$ is
\begin{equation}
\state{f}^a(\mathbf{u}) :=
\state{f}_1^{a,\supEuler} +\state{f}_1^{a,\supMHD}+\state{f}_1^{a,\supGLM}=
\begin{pmatrix} 
\rho v_1 \\[0.15cm]
\rho v_1^2 + p \\[0.15cm]
\rho v_1 v_2 \\[0.15cm]
\rho v_1 v_3 \\[0.15cm]
v_1 \left( \frac{1}{2} \rho \|\vec{v}\|^2 + \frac{\gamma p}{\gamma -1}\right) \\[0.15cm]
0 \\[0.15cm]
0 \\[0.15cm]
0 \\[0.15cm]
0 \\[0.15cm]
\end{pmatrix} +
\begin{pmatrix} 
0 \\[0.15cm]
\frac{1}{\mu_0} \left( \frac{1}{2} \|\vec{B}\|^2 - B_1 B_1 \right) \\[0.15cm]
- B_1 B_2 / \mu_0 \\[0.15cm]
- B_1 B_3 / \mu_0 \\[0.15cm]
\frac{1}{\mu_0} \left( v_1 \|\vec{B}\|^2 - B_1 \left(\vec{v}\cdot\vec{B}\right) \right) \\[0.15cm]
0 \\[0.15cm]
v_1 B_2 - v_2 B_1 \\[0.15cm]
v_1 B_3 - v_3 B_1 \\[0.15cm]
0 \\[0.15cm]
\end{pmatrix} +
\begin{pmatrix} 
0 \\[0.15cm]
0 \\[0.15cm]
0 \\[0.15cm]
0 \\[0.15cm]
\frac{c_h}{\mu_0} \psi B_1 \\[0.15cm]
c_h \psi  \\[0.15cm]
0 \\[0.15cm]
0 \\[0.15cm]
c_h B_1 \\[0.15cm]
\end{pmatrix}.
\label{eq:advective_fluxes_1D}
\end{equation}

In \eqref{eq:advective_fluxes_1D}, we dropped the sub-index in the conservative fluxes to simplify the notation and improve the readability, $\state{f} \leftarrow \state{f}_1$, as this change does not produce ambiguity between the 1D and 3D notations.
The non-conservative term, $\noncon_1 = \noncon^{\supMHD}_1 + \noncon^{\supGLM}_1$, consists of the following two terms
\begin{equation}
\noncon^{\supMHD}_1 = \bigpartialderiv {B_1}{x} \phiMHD = \bigpartialderiv {B_1}{x}
\begin{pmatrix}
0 \\ \mu_0^{-1} B_1 \\ \mu_0^{-1} B_2 \\ \mu_0^{-1} B_3 \\ \mu_0^{-1} \vec{v} \cdot \vec{B} \\ v_1 \\ v_2 \\ v_3 \\  0
\end{pmatrix},
\ \ \ \ \
\noncon^{\supGLM}_1 = \bigpartialderiv {\psi}{x} \phiGLMs_1 = \bigpartialderiv {\psi}{x}
\begin{pmatrix}
0 \\ 0 \\ 0 \\ 0 \\ \mu_0^{-1} v_1 \psi \\ 0 \\ 0 \\ 0 \\ \mu_0^{-1} v_1
\end{pmatrix}.
\label{eq:NonCons_1D}
\end{equation}

Finally, the entropy flux potential in 1D is defined as
\begin{equation}\label{eq:entPotential_1D}
\Psi := \entVar^T \state{f}^a - f^S +\theta B_1.
\end{equation}

\section{Entropy-Stable LGL-DGSEM Discretizations} \label{sec:ESDGSEMs}

In this section, we present a brief literature review of entropy-stable nodal DG discretizations on LGL nodes for general conservation laws and the GLM-MHD equations.
For compactness and readability, we present the one-dimensional forms of these methods. 
The three-dimensional form of the discretizations for curvilinear meshes can be found in the references listed in the following sections.


\subsection{Entropy-Stable LGL-DGSEM for Conservation Laws}

To obtain an entropy-stable LGL-DGSEM discretization of a conservation law,
\begin{equation} \label{eq:consLaw}
\bigpartialderiv {\mathbf{u}}{t}
+ \bigpartialderiv {\state{f}^a} {x}
= \state{0},
\end{equation}
the simulation domain is tessellated into elements and all variables are approximated within each element by piece-wise Lagrange interpolating polynomials of degree $N$ on Legendre-Gauss-Lobatto (LGL) nodes.
These polynomials are continuous in each element and may be discontinuous at the element interfaces.
Furthermore, \eqref{eq:consLaw} is multiplied by an arbitrary polynomial (test function) of degree $N$ and numerically integrated by parts inside each element of the mesh with an LGL quadrature rule of $N+1$ points on a reference element, $\xi \in [-1,1]$, to obtain \cite{Fisher2013,Carpenter2014,Gassner2016}
\begin{align} \label{eq:LGL_DGSEM_cons}
J \omega_j \dot{\state{u}}_j 
+ 2 \sum_{k=0}^N Q_{jk} \state{f}^{*}_{(j,k)} 
+ \delta_{j0} \left( \state{f}^a_0 - \numfluxb{f}^a_{(0,L)} \right)
- \delta_{jN} \left( \state{f}^a_N - \numfluxb{f}^a_{(N,R)}\right)
= \state{0}
\end{align}
for each degree of freedom $j$ of each element. 
In \eqref{eq:LGL_DGSEM_cons}, $\omega_j$ is the reference-space quadrature weight, $J$ is the geometry mapping Jacobian from reference space to physical space, which is constant within each element in the 1D discretization, $Q_{jk}=\omega_j D_{jk}=\omega_j \ell'_k(\xi_j)$ is the SBP derivative matrix, defined in terms of the Lagrange interpolating polynomials, $\{ \ell_k \}_{k=0}^N$, $\state{f}^{*}_{(j,k)} = \state{f}^{*}(\state{u}_j,\state{u}_k)$ is the volume numerical two-point flux, and $\numfluxb{f}^a_{(i,j)} = \numfluxb{f}^a(\state{u}_i,\state{u}_j)$ is the surface numerical flux, which accounts for the jumps of the solution across the cell interfaces.
The sub-indices $(0,L)$ and $(N,R)$ indicate that the numerical flux is computed between a boundary node and an outer state (left or right).

For compactness and ease of implementation, \eqref{eq:LGL_DGSEM_cons} can be rewritten as
\begin{align} \label{eq:LGL_DGSEM_cons2}
\boxed{
J \omega_j \dot{\state{u}}_j 
+ \sum_{k=0}^N S_{jk} \state{f}^{*}_{(j,k)} 
- \delta_{0N} \numfluxb{f}^a_{(0,L)}
+ \delta_{jN} \numfluxb{f}^a_{(N,R)}
= \state{0},
}
\end{align}
where $\mat{S}$ is a skew-symmetric matrix defined as
\begin{equation}
\mat{S} = 2 \mat{Q} - \mat{B},
\end{equation}
and 
\begin{equation} \label{eq:Bmat_LGL}
    \mat{B} := diag(-1, 0,  \ldots, 0, 1).
\end{equation}

\begin{lemma} \label{lemma:Entropy_LGL_DGSEM_cons}
The semi-discrete entropy balance of the LGL-DGSEM discretization of conservation laws, \eqref{eq:LGL_DGSEM_cons2}, integrating over an entire element, reads
\begin{equation} \label{eq:Entropy_LGL_DGSEM_cons}
\sum_{j=0}^N \omega_j J \dot{S}_j = 
\numflux{f}^S_{(0,L)} -\numflux{f}^S_{(N,R)} + \frac{1}{2} \left( \hat{r}_{(L,0)} + \hat{r}_{(N,R)} \right) + \sum_{j,k=0}^N Q_{jk} r_{(j,k)},
\end{equation}
where the numerical entropy flux is defined as
\begin{align} 
\numflux{f}^S_{(j,k)} &= 
\avg{\entVar}_{(j,k)}^T \numfluxb{f}^a_{(j,k)} 
- \avg{\Psi}_{(j,k)},
\label{eq:numEntFlux_cons} 
\end{align}
the surface and volumetric entropy production terms are defined respectively as
\begin{align} 
\hat{r}_{(j,k)} &=
\jump{\entVar}_{(j,k)}^T 
\numfluxb{f}^a_{(j,k)}
- \jump{\Psi}_{(j,k)},
\label{eq:EntProd_cons} \\
r_{(j,k)} &=
\jump{\entVar}_{(j,k)}^T 
\state{f}^*_{(j,k)}
- \jump{\Psi}_{(j,k)}.
\label{eq:EntProdVol_cons}
\end{align}
\end{lemma}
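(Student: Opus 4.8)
The plan is to contract the element-local scheme \eqref{eq:LGL_DGSEM_cons2} with the discrete entropy variables $\entVar_j$ and sum over all nodes $j = 0,\dots,N$. Because $\entVar = \partial S / \partial \state{u}$, the chain rule turns the temporal contribution into $\sum_j \omega_j J\, \entVar_j^T \dot{\state{u}}_j = \sum_j \omega_j J\, \dot{S}_j$, which is precisely the left-hand side of \eqref{eq:Entropy_LGL_DGSEM_cons}. It then remains to rewrite the contracted volume term $\sum_{j,k} \entVar_j^T S_{jk}\, \state{f}^{*}_{(j,k)}$ and the two contracted surface contributions $\entVar_0^T \numfluxb{f}^a_{(0,L)}$ and $\entVar_N^T \numfluxb{f}^a_{(N,R)}$ into the quantities on the right-hand side.

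For the volume term I would exploit the skew-symmetry of $\mat{S} = 2\mat{Q} - \mat{B}$ (which follows from the SBP property $\mat{Q} + \mat{Q}^T = \mat{B}$) together with the symmetry of the two-point volume flux, $\state{f}^{*}_{(j,k)} = \state{f}^{*}_{(k,j)}$. Writing the double sum symmetrically and relabelling the summation indices converts the contraction into $-\frac{1}{2}\sum_{j,k} S_{jk}\, \jump{\entVar}_{(j,k)}^T \state{f}^{*}_{(j,k)}$, using $\entVar_j - \entVar_k = -\jump{\entVar}_{(j,k)}$. Substituting $\mat{S} = 2\mat{Q} - \mat{B}$ and the definition \eqref{eq:EntProdVol_cons} in the form $\jump{\entVar}_{(j,k)}^T \state{f}^{*}_{(j,k)} = r_{(j,k)} + \jump{\Psi}_{(j,k)}$ produces the target term $\sum_{j,k} Q_{jk}\, r_{(j,k)}$ plus a potential contribution $-\sum_{j,k} Q_{jk}\, \jump{\Psi}_{(j,k)}$; the diagonal $\mat{B}$ pieces drop out since $\jump{\cdot}_{(j,j)} = 0$. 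The potential contribution is where the SBP structure does the work: using the consistency of the derivative $\sum_k Q_{jk} = 0$ and the relation $\sum_j Q_{jk} = B_{kk}$ (from $\mat{Q}^T = \mat{B} - \mat{Q}$), the scalar potential jumps telescope to the boundary values $\Psi_0 - \Psi_N$.

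Collecting the pieces, the balance at this stage reads $\sum_j \omega_j J\, \dot{S}_j = \sum_{j,k} Q_{jk}\, r_{(j,k)} + (\Psi_N - \Psi_0) + \entVar_0^T \numfluxb{f}^a_{(0,L)} - \entVar_N^T \numfluxb{f}^a_{(N,R)}$. The closing step is purely algebraic: at each boundary I would verify the identities $\Psi_N - \entVar_N^T \numfluxb{f}^a_{(N,R)} = -\numflux{f}^S_{(N,R)} + \frac{1}{2}\hat{r}_{(N,R)}$ and $\entVar_0^T \numfluxb{f}^a_{(0,L)} - \Psi_0 = \numflux{f}^S_{(0,L)} + \frac{1}{2}\hat{r}_{(L,0)}$, which follow directly from the definitions \eqref{eq:numEntFlux_cons} and \eqref{eq:EntProd_cons} together with the elementary cancellation $\avg{a}_{(i,j)} \pm \frac{1}{2}\jump{a}_{(i,j)}$ isolating a single state. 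Inserting these two relations recovers exactly \eqref{eq:Entropy_LGL_DGSEM_cons}.

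I expect the main obstacle to be the bookkeeping inside the volume term: carrying the sign convention of the jump operator correctly through the symmetric splitting and, above all, confirming that the $\jump{\Psi}$ terms telescope cleanly to the boundary potentials, which requires using both SBP identities $\sum_k Q_{jk} = 0$ and $\sum_j Q_{jk} = B_{kk}$ simultaneously. The surface step is routine once the average/jump identities are written out, but some care is needed because the interface flux must be single-valued (so that $\numfluxb{f}^a_{(L,0)} = \numfluxb{f}^a_{(0,L)}$) and because the jump at the left boundary is ordered $(L,0)$ whereas the numerical entropy flux is ordered $(0,L)$.
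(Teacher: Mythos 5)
Your proposal is correct and follows the same route the paper relies on (and spells out explicitly for the Gauss analogue in the proof of Lemma \ref{lemma:Entropy_Gauss_DGSEM_cons}): contraction with $\entVar_j$, skew-symmetry of $\mat{S}$ plus symmetry of $\state{f}^{*}$ to form $\jump{\entVar}^T\state{f}^{*}$, the definition of $r_{(j,k)}$ and the SBP identities to telescope the $\jump{\Psi}$ contribution to $\Psi_N-\Psi_0$, and the average/jump splitting at the interfaces to assemble $\numflux{f}^S$ and $\tfrac12\hat{r}$. The boundary identities you state check out exactly, so no gap remains.
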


\begin{proof}
The proof of entropy conservation/stability for the LGL-DGSEM discretization of conservation laws was originally presented by \citet{Gassner2013}, \citet{Fisher2013a} and \citet{Carpenter2014}. Moreover, a proof in a very similar notation to the one used in this paper can be found in \cite{Hennemann2020}.
\end{proof}

Due to Lemma \ref{lemma:Entropy_LGL_DGSEM_cons}, it is possible to control the entropy behavior of the DGSEM discretization by selecting the volume and surface numerical fluxes.
If an entropy conservative flux is used for both the volume and surface numerical fluxes, the scheme is provably entropy conservative in its advective terms.
Furthermore, if an entropy conservative flux is used for the volume numerical fluxes and an entropy-stable flux is used for the surface numerical fluxes, the resulting scheme is provably entropy stable.

\begin{remark}
The volume numerical flux of \eqref{eq:LGL_DGSEM_cons2} is consistent with the surface numerical flux because, when evaluated on a single point, both fluxes converge to the physical flux,
\begin{equation} \label{eq:consistentProp}
\numfluxb{f}^a (\state{u}_j,\state{u}_j) = 
\state{f}^{*} (\state{u}_j,\state{u}_j) =  \state{f}^a ( \state{u}_j).
\end{equation} 
Moreover, the volume numerical flux of \eqref{eq:LGL_DGSEM_cons2} is interchangeable with the surface numerical flux in the entropy conservative scheme.
Any combination of entropy conservative fluxes can be selected for the volume and surface numerical fluxes, and the resulting scheme is entropy conservative.
As a matter of fact, a common practice to obtain a provably entropy-stable scheme is to use the same entropy conservative flux in the volume and the surface, and to add a dissipation term in the surface flux.
\end{remark}


\subsection{Entropy-Stable LGL-DGSEM for the Non-Conservative GLM-MHD System}

\citet{Bohm2018} proposed an entropy-stable LGL-DGSEM discretization of the resistive GLM-MHD equations that reads as
\begin{empheq}[box=\fbox]{align} \label{eq:LGL_DGSEM_GLMMHD}
J \omega_j \dot{\state{u}}_j 
+2 &\sum_{k=0}^N Q_{jk} \state{f}^{*}_{(j,k)} 
+ \delta_{j0} \left( \state{f}^a_0 - \numfluxb{f}^a_{(0,L)}  \right)
- \delta_{jN} \left( \state{f}^a_N - \numfluxb{f}^a_{(N,R)} \right)
\nonumber \\
+  &\sum_{k=0}^N Q_{jk} \Jan^{*}_{(j,k)} 
+ \delta_{j0} \left( \Jan_0 - \numnonconsD{\Jan}_{(0,L)} \right)
- \delta_{jN} \left( \Jan_N - \numnonconsD{\Jan}_{(N,R)} \right)
= \state{0},
\end{empheq}
where $\Jan$ is a non-derivative version of the non-conservative terms,
\begin{equation}
\Jan := \phiMHD B_1 + \phiGLMs_1 \psi,
\end{equation}
$\Jan^{*}_{(j,k)}$ is the so-called volume numerical non-conservative term, and $\numnonconsD{\Jan}_{i,j}$ is the so-called surface numerical non-conservative term.
The surface numerical non-conservative term is defined as \cite{Derigs2018,Bohm2018}
\begin{align} \label{eq:DiamondFluxes}
\numnonconsD{\Jan}_{(j,j+1)} &= 
\numnonconsD{ \left( \phiMHD B_1 \right) }_{(j,j+1)} 
+
\numnonconsD{ \left( \phiGLMs_1 \psi \right) }_{(j,j+1)} 
\nonumber \\
&=
\avg{B_1}_{(j,j+1)} \phiMHD_j + \avg{\psi}_{(j,j+1)} \phiGLMs_{1,j}.
\end{align}
and the volume numerical non-conservative term is defined as \cite{Bohm2018}
\begin{align} \label{eq:volNonCons_D1}
\Jan^{*}_{(j,k)} &= \noncon^{*\supMHD}_{(j,k)} + \noncon^{*\supGLM}_{(j,k)} \nonumber \\
&= \phiMHD_j B_{1,k} + \phiGLMs_{1,j} \psi_k.
\end{align}

\begin{remark}
The volume numerical non-conservative term of \eqref{eq:LGL_DGSEM_GLMMHD} is consistent with the surface numerical non-conservative term because, when evaluated on a single point, both are identical,
\begin{equation} \label{eq:consistentProp_noncons}
\numnonconsD{\Jan} (\state{u}_j,\state{u}_j) = 
\Jan^{*} (\state{u}_j,\state{u}_j) =  
\Jan ( \state{u}_j).
\end{equation} 
However, these two terms are not interchangeable as they have a very dissimilar algebraic form.
\end{remark}

\begin{lemma} \label{lemma:Entropy_LGL_DGSEM_MHD}
The semi-discrete entropy balance of the LGL-DGSEM discretization of the non-conservative GLM-MHD system, \eqref{eq:LGL_DGSEM_GLMMHD}, integrating over an entire element, reads
\begin{equation} \label{eq:Entropy_LGL_DGSEM_MHD}
\sum_{j=0}^N \omega_j J \dot{S}_j = 
\numflux{f}^S_{(0,L)} -\numflux{f}^S_{(N,R)} + \frac{1}{2} \left( \hat{r}_{(L,0)} + \hat{r}_{(N,R)} \right) + \sum_{j,k=0}^N Q_{jk} r_{(j,k)}.
\end{equation}
where the numerical entropy flux and the entropy production on the surface and volume are defined respectively as
\begin{align}
\numflux{f}^S_{(j,k)} &= 
\avg{\entVar}_{(j,k)}^T \numfluxb{f}^a_{(j,k)} 
+ \frac{1}{2} \entVar^T_j \numnonconsD{\stateG{\Phi}}_{(j,k)}
+ \frac{1}{2} \entVar^T_{k} \numnonconsD{\stateG{\Phi}}_{(k,j)}
- \avg{\Psi}_{(j,k)},
\label{eq:numEntFlux_MHD} 
\\
\hat{r}_{(j,k)} &=
\jump{\entVar}_{(j,k)}^T 
\numfluxb{f}^a_{(j,k)}
+ \entVar^T_{k} \numnonconsD{\stateG{\Phi}}_{(k,j)}
- \entVar^T_{j}   \numnonconsD{\stateG{\Phi}}_{(j,k)}
- \jump{\Psi}_{(j,k)}.
\label{eq:EntProd_MHD} 
\\
r_{(j,k)} &=
\jump{\entVar}_{(j,k)}^T 
\state{f}^*_{(j,k)}
+ \entVar^T_{k} \numnonconsD{\stateG{\Phi}}_{(k,j)}
- \entVar^T_{j}   \numnonconsD{\stateG{\Phi}}_{(j,k)}
- \jump{\Psi}_{(j,k)}.
\label{eq:EntProdVol_MHD} 
\end{align}
\end{lemma}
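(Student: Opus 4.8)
The plan is to mirror the conservative proof of Lemma~\ref{lemma:Entropy_LGL_DGSEM_cons}: I would contract the discretization \eqref{eq:LGL_DGSEM_GLMMHD} from the left with the nodal entropy variables $\entVar_j^T$ and sum over all degrees of freedom $j$. Because $\entVar=\partial S/\partial\state{u}$, the time term collapses to $\sum_j\omega_j J\,\entVar_j^T\dot{\state{u}}_j=\sum_j\omega_j J\dot{S}_j$, the left-hand side of \eqref{eq:Entropy_LGL_DGSEM_MHD}, so the work is to show that the four contracted spatial groups (conservative volume, conservative surface, non-conservative volume, non-conservative surface) reassemble into the claimed numerical entropy fluxes and entropy productions. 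The conservative groups can be handled exactly as in Lemma~\ref{lemma:Entropy_LGL_DGSEM_cons}, so I would isolate and focus on the non-conservative ones.

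For the conservative volume term $2\sum_{j,k}Q_{jk}\entVar_j^T\state{f}^*_{(j,k)}$ I would use the symmetry $\state{f}^*_{(j,k)}=\state{f}^*_{(k,j)}$ and the SBP relation $Q_{jk}+Q_{kj}=B_{jk}$ to reduce it to the conservative part of $-\sum_{j,k}Q_{jk}r_{(j,k)}$ together with boundary terms. The potential telescopes through the column-sum identity $\sum_j Q_{jk}=B_{kk}$, so that $\sum_{j,k}Q_{jk}\jump{\Psi}_{(j,k)}=\Psi_N-\Psi_0$; inserting the definition \eqref{eq:entPotential_1D}, $\Psi=\entVar^T\state{f}^a-f^S+\theta B_1$, the boundary contributions become $-f^S_0$, $+f^S_N$ plus leftover $\pm\theta B_1$ pieces that I expect to cancel against the non-conservative term.

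The main obstacle is the non-conservative volume term $\sum_{j,k}Q_{jk}\entVar_j^T\Jan^{*}_{(j,k)}$, because $\Jan^{*}_{(j,k)}=\phiMHD_j B_{1,k}+\phiGLMs_{1,j}\psi_k$ is \emph{not} symmetric in $(j,k)$, so the symmetrization used for $\state{f}^*$ fails. I would resolve this with the algebraic identity relating the volume and surface non-conservative terms, namely $\numnonconsD{\Jan}_{(j,k)}=\tfrac12\bigl(\Jan_j+\Jan^{*}_{(j,k)}\bigr)$, i.e. $\Jan^{*}_{(j,k)}=2\numnonconsD{\Jan}_{(j,k)}-\Jan_j$. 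Substituting this and using the zero-row-sum property $\sum_k Q_{jk}=0$ annihilates the $\Jan_j$ piece, leaving $2\sum_{j,k}Q_{jk}\entVar_j^T\numnonconsD{\Jan}_{(j,k)}$; a further index swap with $Q_{kj}=B_{jk}-Q_{jk}$ then generates exactly the antisymmetric combination $\entVar_k^T\numnonconsD{\Jan}_{(k,j)}-\entVar_j^T\numnonconsD{\Jan}_{(j,k)}$ of \eqref{eq:EntProdVol_MHD}, plus a boundary term $-\entVar_0^T\Jan_0+\entVar_N^T\Jan_N$. The decisive ingredient is the entropy identity $\entVar^T\phiGLMs_1=0$ (obtained by substituting \eqref{entrvars} into \eqref{Galilean}), which gives $\entVar_j^T\Jan_j=\theta_j B_{1,j}$; these boundary terms then cancel the stray $\pm\theta B_1$ contributions of the conservative potential, and the two volume groups combine into $f^S_N-f^S_0-\sum_{j,k}Q_{jk}r_{(j,k)}$.

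Finally I would collect the surface groups $\entVar_0^T(\state{f}^a_0-\numfluxb{f}^a_{(0,L)})-\entVar_N^T(\state{f}^a_N-\numfluxb{f}^a_{(N,R)})$ and their non-conservative analogues, again using $\Psi=\entVar^T\state{f}^a-f^S+\theta B_1$ and $\entVar^T\Jan=\theta B_1$ to remove the physical fluxes and the $\theta B_1$ terms at the two element ends. What is left are the outer-state quantities $\entVar_0^T\numfluxb{f}^a_{(0,L)}+\entVar_0^T\numnonconsD{\Jan}_{(0,L)}-\Psi_0$ on the left and the mirror expression on the right. A direct computation from the definitions \eqref{eq:numEntFlux_MHD} and \eqref{eq:EntProd_MHD}, using the symmetry $\numfluxb{f}^a_{(L,0)}=\numfluxb{f}^a_{(0,L)}$ and the cancellation of the $\entVar_L$ and $\entVar_R$ terms between $\numflux{f}^S$ and $\tfrac12\hat{r}$, shows these equal $\numflux{f}^S_{(0,L)}+\tfrac12\hat{r}_{(L,0)}$ and $-\numflux{f}^S_{(N,R)}+\tfrac12\hat{r}_{(N,R)}$, respectively. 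Assembling the volume and surface pieces then gives \eqref{eq:Entropy_LGL_DGSEM_MHD}. I expect the bookkeeping of the boundary terms — in particular the $\theta B_1$ cancellation and the index conventions $(0,L)$ versus $(L,0)$ — to be the most error-prone part, with the structural content carried entirely by the two identities $\Jan^{*}=2\numnonconsD{\Jan}-\Jan_j$ and $\entVar^T\phiGLMs_1=0$.
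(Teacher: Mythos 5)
Your proposal is correct, and it supplies an argument that the paper itself does not spell out: the paper's ``proof'' of Lemma~\ref{lemma:Entropy_LGL_DGSEM_MHD} is a citation to \cite{Bohm2018} and \cite{Rueda-Ramirez2020}. Your two structural ingredients both check out. First, $\entVar^T\phiGLMs_1 = \mu_0^{-1}\left(-2\beta\, v_1\psi + 2\beta\psi\, v_1\right) = 0$, so indeed $\entVar^T\Jan = \theta B_1$, which is exactly why the potential \eqref{eq:entPotential_1D} carries only the $\theta B_1$ correction; this makes the boundary $\pm\theta B_1$ cancellation work as you describe. Second, the identity $\Jan^{*}_{(j,k)} = 2\numnonconsD{\Jan}_{(j,k)} - \Jan_j$ is precisely \eqref{eq:nonConsIdentity} from Remark~\ref{rem:newLGL=Marvins}, and using it to kill the asymmetric $\Jan^{*}$ via $\sum_k Q_{jk}=0$ and then symmetrize $\numnonconsD{\Jan}$ with $Q_{kj}=B_{jk}-Q_{jk}$ reproduces exactly the combination $\entVar_k^T\numnonconsD{\Jan}_{(k,j)} - \entVar_j^T\numnonconsD{\Jan}_{(j,k)}$ in \eqref{eq:EntProdVol_MHD} plus the node-value boundary terms $\entVar_N^T\Jan_N - \entVar_0^T\Jan_0$. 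In effect you first pass to the reformulation \eqref{eq:LGL_DGSEM_GLMMHD_cleanedUp} and then run the entropy argument on it, which is slicker than the original treatment in \cite{Bohm2018} (which manipulates $\Jan^{*}$ directly) and mirrors the structure the paper uses for the Gauss proofs of Lemmas~\ref{lemma:Entropy_Gauss_DGSEM_GLMMHD} and \ref{lemma:Entropy_Gauss_DGSEM_GLMMHD_3D}. I verified the final surface bookkeeping as well: combining the volume boundary residue with the contracted surface terms leaves $\entVar_0^T\numfluxb{f}^a_{(0,L)} + \entVar_0^T\numnonconsD{\Jan}_{(0,L)} - \Psi_0$ on the left (and its mirror on the right), and expanding \eqref{eq:numEntFlux_MHD} and \eqref{eq:EntProd_MHD} with the stated index conventions gives $\numflux{f}^S_{(0,L)} + \tfrac12\hat{r}_{(L,0)}$ and $-\numflux{f}^S_{(N,R)} + \tfrac12\hat{r}_{(N,R)}$ exactly, with no stray $f^S$ terms. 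The only blemish is a sign-convention wobble in the narrative (the $\mp f^S$ attributions flip depending on whether you track the left-hand-side operator or its contribution to $\dot S$), but the final assembled identities are consistent.
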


\begin{proof}
The proof of entropy conservation/stability for the LGL-DGSEM was originally presented by \citet{Bohm2018}, and can also be found in \cite{Rueda-Ramirez2020}, where the exact same notation of this paper is used.
\end{proof}

\section{Novel Entropy-Stable Gauss-DGSEM Discretizations} \label{sec:Gauss_DGSEM}

In this section, we present entropy-stable Gauss-DGSEM discretizations.
In the first part,  we rewrite the scheme for conservation laws proposed by \citet{chan2019efficient} in an element-local form in one spatial dimension. 
In the second part, we present the novel Gauss-DGSEM scheme for the non-conservative GLM-MHD system in one dimension and show its extension to 3D and curvilinear meshes.


\subsection{Entropy-Stable Gauss-DGSEM for Conservation Laws}

We focus on the entropy-stable Gauss-DGSEM discretization of conservation laws proposed by \citet{chan2019efficient}, which is based on the generalized summation by parts property,
\begin{equation} \label{eq:genSBP}
\mat{Q} + \mat{Q}^T = \hat{\mat{B}},
\end{equation}
where the generalized boundary matrix is defined as
\begin{equation}
\hat{\mat{B}} = 
\mat{V}_f^T \mat{B}^0 \mat{V}_f,
\end{equation}
$\mat{V}_f$ is a matrix that interpolates polynomials at Gauss nodes to values at the element boundaries, and $\mat{B}^0$ is a $2 \times 2$ boundary matrix:
\begin{equation}
\mat{V}_f = 
\begin{bmatrix}
\ell_0(-1) & \ell_1(-1) & \cdots & \ell_N(-1) \\
\ell_0(+1) & \ell_1(+1) & \cdots & \ell_N(+1)
\end{bmatrix}
, ~~~~~~
\mat{B}^0 = 
\begin{bmatrix}
-1 & 0 \\
 0 & 1
\end{bmatrix}.
\end{equation}
As a result, the components of matrix $\hat{\mat{B}}$ can be written explicitly as
\begin{equation}
    \hat{B}_{jk} = \ell_j(+1) \ell_k(+1) - \ell_j(-1) \ell_k(-1).
\end{equation}
Matrices $\mat{Q}$ and $\hat{\mat{B}}$ have the following generalized SBP properties:
\begin{equation} \label{eq:genSBPprop}
\sum_{k=0}^N Q_{jk} = 0,
~~~~~~~~~~
\sum_{k=0}^N \hat{B}_{jk} = \ell_j (+1) - \ell_j (-1),
~~~~~~~~~~
\sum_{k=0}^N \hat{B}_{jk} u_k = \ell_j (+1) u_R - \ell_j (-1) u_L,
~~~~~~~~~~
\forall j,
\end{equation}
for any polynomial $u \in \mathbb{P}^N$ with nodal values $u_k$ and boundary values $u_L$ and $u_R$.

The entropy-stable Gauss-DGSEM discretization of \citet{chan2019efficient} can be written in an element-local fashion as
\begin{empheq}[box=\fbox]{align} \label{eq:Gauss_DGSEM_cons}
J \omega_j \dot{\state{u}}_j 
+ \sum_{k=0}^N \hat{S}_{jk} \state{f}^{*}_{(j,k)} 
-& \ell_{j}(-1) \left[
   \state{f}^* \left( \state{u}_j , \tilde{\state{u}}_L \right)
 - \sum_{k=0}^N \ell_{k}(-1) \, \state{f}^* \left( \tilde{\state{u}}_L , \state{u}_k \right)
 + \numfluxb{f}^a \left( \tilde{\state{u}}_L, \tilde{\state{u}}^{+}_L \right)
 \right]
\nonumber\\
+& \ell_{j}(+1) \left[
 \state{f}^* \left( \state{u}_j , \tilde{\state{u}}_R \right)
 - \sum_{k=0}^N \ell_{k}(+1) \, \state{f}^* \left( \tilde{\state{u}}_R , \state{u}_k \right)
 + \numfluxb{f}^a \left( \tilde{\state{u}}_R, \tilde{\state{u}}^{+}_R \right)
\right]
= \state{0},
\end{empheq}
where $\hat{\mat{S}}$ is a skew-symmetric matrix defined as
\begin{equation} \label{eq:Shat_mat}
\hat{\mat{S}} = 2 \mat{Q} - \hat{\mat{B}},
\end{equation}
$\tilde{\state{u}}_L$ and $\tilde{\state{u}}_R$ denote the so-called \textit{entropy-projected} solution at the element boundaries,
\begin{equation}
\tilde{\state{u}}_L := \state{u} \left( \sum_{i=0}^N \ell_i(-1) \entVar (\state{u}_i) \right), 
~~~~~~
\tilde{\state{u}}_R := \state{u} \left( \sum_{i=0}^N \ell_i(+1) \entVar (\state{u}_i) \right),
\end{equation}
where the operator $\state{u}(\cdot)$ computes the state variables from a set of entropy variables, and the operator $\entVar(\cdot)$ computes the entropy variables from a set of state variables, and $\tilde{\state{u}}_L^+$ and $\tilde{\state{u}}_R^+$ denote the external states at the element boundaries, which can be defined by a boundary condition or a neighbor element's entropy-projected solution.

Note that \eqref{eq:genSBP} is valid for both Gauss and LGL discretizations.
Hence, \eqref{eq:Gauss_DGSEM_cons} can be easily shown to be equivalent to \eqref{eq:LGL_DGSEM_cons2} when LGL nodes are used.

\begin{lemma} \label{lemma:Entropy_Gauss_DGSEM_cons}
The semi-discrete entropy balance of the Gauss-DGSEM discretization of conservation laws, \eqref{eq:Gauss_DGSEM_cons}, integrating over an entire element, reads
\begin{align} \label{eq:Entropy_Gauss_DGSEM_cons}
\sum_{j=0}^N \omega_j J \dot{S}_j = 
\numflux{f}^S \left( \tilde{\state{u}}_L, \tilde{\state{u}}^{+}_L \right)
-\numflux{f}^S \left( \tilde{\state{u}}_R, \tilde{\state{u}}^{+}_R \right) 
&+ \frac{1}{2} \left[
	\hat{r} \left( \tilde{\state{u}}_L, \tilde{\state{u}}^{+}_L \right)
  + \hat{r} \left( \tilde{\state{u}}_R, \tilde{\state{u}}^{+}_R \right)
\right]
\nonumber \\
&+ \frac{1}{2} \sum_{j,k=0}^N \hat{S}_{jk} r_{(j,k)}
+ \sum_{j=0}^N \left( \ell_{j}(+1) \tilde{r}_{(j,R)} - \ell_{j}(-1) \tilde{r}_{(j,L)} \right),
\end{align}
where the last two sums denote the contribution of volumetric entropy production terms \eqref{eq:EntProdVol_cons}, and we use the short-hand notation $\tilde{r}_{(j,L)}$ and $\tilde{r}_{(j,R)}$ for the volumetric entropy production terms between the solution at node $j$ and the entropy-projected solution at the left and right boundaries, respectively, i.e., 
\begin{equation} \label{eq:EntProdTilde_cons}
    \tilde{r}_{(j,L)} := r(\state{u}_j,\tilde{\state{u}}_L),
    \,\,\,\,
    \tilde{r}_{(j,R)} :=
    r(\state{u}_j,\tilde{\state{u}}_R).
\end{equation}
The surface entropy production term is defined in \eqref{eq:EntProd_cons} and the numerical entropy flux defined in \eqref{eq:numEntFlux_cons}.
Recall that \eqref{eq:numEntFlux_cons} is a symmetric term, which implies entropy conservation when the volume and surface entropy production vanishes.
\end{lemma}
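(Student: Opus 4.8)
The plan is to carry out the standard discrete entropy analysis: contract the element-local scheme \eqref{eq:Gauss_DGSEM_cons} from the left with the nodal entropy variables $\entVar_j^T$ and sum over all degrees of freedom $j=0,\dots,N$. Since $\entVar = \partial S/\partial \state{u}$, the time-derivative term collapses directly to the left-hand side, $\sum_{j} \entVar_j^T J\omega_j \dot{\state{u}}_j = \sum_{j} J\omega_j \dot{S}_j$. I would move every spatial contribution to the right-hand side, so that the target identity appears as $\sum_j J\omega_j \dot{S}_j = -(\text{volume}) - (\text{left bracket}) - (\text{right bracket})$ after contraction, and then rearrange each piece into the surface fluxes, the surface production, and the two volumetric production sums appearing in \eqref{eq:Entropy_Gauss_DGSEM_cons}.

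For the volume term $\sum_{j,k}\hat{S}_{jk}\entVar_j^T \state{f}^{*}_{(j,k)}$, I would exploit the skew-symmetry of $\hat{\mat{S}}$ (which follows from the generalized SBP property \eqref{eq:genSBP} via $\hat{\mat{S}}=\mat{Q}-\mat{Q}^T$) together with the symmetry $\state{f}^{*}_{(j,k)}=\state{f}^{*}_{(k,j)}$ of the two-point volume flux to symmetrize the double sum into $-\tfrac{1}{2}\sum_{j,k}\hat{S}_{jk}\jump{\entVar}_{(j,k)}^T\state{f}^{*}_{(j,k)}$. Inserting the definition \eqref{eq:EntProdVol_cons}, i.e. $\jump{\entVar}_{(j,k)}^T\state{f}^{*}_{(j,k)}=r_{(j,k)}+\jump{\Psi}_{(j,k)}$, splits this into the desired $\tfrac{1}{2}\sum_{j,k}\hat{S}_{jk}r_{(j,k)}$ plus a stray potential contribution. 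Using the row and column sums of $\hat{\mat{S}}$, which the SBP identities \eqref{eq:genSBPprop} fix to $\sum_k \hat{S}_{jk}=\ell_j(-1)-\ell_j(+1)$, that stray contribution collapses to the nodal-interpolated potentials $\sum_j \Psi_j\bigl(\ell_j(+1)-\ell_j(-1)\bigr)$.

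The boundary brackets are where the entropy projection is essential. For the right bracket I would contract with $\ell_j(+1)\entVar_j^T$, sum over $j$, and use the defining property of the entropy-projected state, $\sum_j \ell_j(+1)\entVar_j=\entVar(\tilde{\state{u}}_{R})=:\tilde{\entVar}_{R}$, so that the inner "all-to-all" flux sum factorizes. Combining the first two flux terms and invoking the symmetry of $\state{f}^{*}$ and the partition of unity $\sum_k\ell_k(+1)=1$, these reduce, through the definition \eqref{eq:EntProdTilde_cons} of $\tilde{r}_{(j,R)}$, to $\sum_k \ell_k(+1)\tilde{r}_{(k,R)}$ plus the projected potential $\tilde{\Psi}_{R}$ and a stray $-\sum_k\ell_k(+1)\Psi_k$; the surface-flux term contributes $\tilde{\entVar}_{R}^T\numfluxb{f}^a(\tilde{\state{u}}_{R},\tilde{\state{u}}^{+}_{R})$. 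The pair $\tilde{\Psi}_{R}-\tilde{\entVar}_{R}^T\numfluxb{f}^a(\tilde{\state{u}}_{R},\tilde{\state{u}}^{+}_{R})$ is then recognized, via definitions \eqref{eq:numEntFlux_cons} and \eqref{eq:EntProd_cons}, as exactly $-\numflux{f}^S(\tilde{\state{u}}_{R},\tilde{\state{u}}^{+}_{R})+\tfrac{1}{2}\hat{r}(\tilde{\state{u}}_{R},\tilde{\state{u}}^{+}_{R})$. The left bracket is handled identically with the mirror sign conventions.

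The last and, in my view, most delicate step is the bookkeeping of the potential terms. The stray interpolated potentials $\sum_j \ell_j(+1)\Psi_j$ and $\sum_j \ell_j(-1)\Psi_j$ generated by the volume term must cancel exactly against those produced by the right and left brackets; verifying this cancellation, and being careful that it is the nodal-interpolated potential that cancels while the nonlinear projected potential $\tilde{\Psi}_{L/R}$ (which is \emph{not} the interpolant of the $\Psi_j$) survives only inside the numerical entropy flux and surface production, is the crux of the argument. Once this cancellation is established and the flux and production definitions are substituted, the surviving terms are precisely $\numflux{f}^S(\tilde{\state{u}}_{L},\tilde{\state{u}}^{+}_{L})-\numflux{f}^S(\tilde{\state{u}}_{R},\tilde{\state{u}}^{+}_{R})$, the half-sum of surface productions, the half-sum $\tfrac{1}{2}\sum_{j,k}\hat{S}_{jk}r_{(j,k)}$, and the two $\tilde{r}$ sums, which is \eqref{eq:Entropy_Gauss_DGSEM_cons}. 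The only subtlety beyond careful algebra is keeping the jump and average sign conventions consistent between the two interfaces.
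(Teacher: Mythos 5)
Your proposal is correct and follows essentially the same route as the paper's proof: contraction with the nodal entropy variables, symmetrization of the volume term via skew-symmetry of $\hat{\mat{S}}$ and the generalized SBP row sums, reduction of the boundary brackets through the entropy projection and the $\tilde{r}$ terms, and cancellation of the interpolated potentials $\Psi_L,\Psi_R$ while the projected potentials $\tilde{\Psi}_{L/R}$ survive inside $\numflux{f}^S$ and $\hat{r}$. The only cosmetic difference is that you verify the identity $\tilde{\Psi}_R - \tilde{\entVar}_R^T\numfluxb{f}^a = -\numflux{f}^S + \tfrac{1}{2}\hat{r}$ directly, whereas the paper reaches the same point by adding and subtracting the outer-state terms in \eqref{eq:outerTerms}.
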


\begin{proof}
The proof of entropy conservation/stability of \eqref{eq:Gauss_DGSEM_cons} was originally presented in \cite{chan2019efficient} using so-called \textit{block hybridized} matrix operators.
However, we recreate the one-dimensional proof in our own notation for completeness, where we explicitly derive the different surface and volume entropy production terms.

The semi-discrete entropy balance is obtained by contracting \eqref{eq:Gauss_DGSEM_cons} with the entropy variables and integrating over an element,
\begin{align}
\sum_{j=0}^N \omega_j J \dot{S}_j 
  =& - 
\underbrace{
	\sum_{j=0}^{N} \entVar^T_j 
	\left[
		\sum_{k=0}^N \hat{S}_{jk} \state{f}^{*}_{(j,k)}
	\right]
}_{(a):= \text{ volume term}} 
\nonumber\\
&+
\underbrace{
	\sum_{j=0}^N \entVar_j^T \ell_{j}(-1) 
	\left[
		\state{f}^* \left( \state{u}_j , \tilde{\state{u}}_L \right)
		- \sum_{k=0}^N \ell_{k}(-1) \, \state{f}^* \left( \tilde{\state{u}}_L , \state{u}_k \right)
	\right]
}_{(b):= \text{ new terms (left)}} 
+ \underbrace{ 
	\sum_{j=0}^N \entVar_j^T \ell_{j}(-1) \numfluxb{f}^a \left( \tilde{\state{u}}_L, \tilde{\state{u}}^{+}_L \right)
}_{(c):= \text{ boundary term (left)}} 
\nonumber\\
&-
\underbrace{
	\sum_{j=0}^N \entVar_j^T \ell_{j}(+1) 
	\left[
		\state{f}^* \left( \state{u}_j , \tilde{\state{u}}_R \right)
		- \sum_{k=0}^N \ell_{k}(+1) \, \state{f}^* \left( \tilde{\state{u}}_R, \state{u}_k  \right)
	\right]
}_{(d):= \text{ new terms (right)}} 
- \underbrace{ 
	\sum_{j=0}^N \entVar_j^T \ell_{j}(+1)  \numfluxb{f}^a \left( \tilde{\state{u}}_R, \tilde{\state{u}}^{+}_R \right)
}_{(e):= \text{ boundary term (right)}}.
\label{eq:EntBalance_Gauss_cons}
\end{align}

Let us first manipulate the volume terms:
\begin{align*} 
(a)
	=& \sum_{j=0}^{N} \entVar^T_j \sum_{k=0}^N \hat{S}_{jk} \state{f}^{*}_{(j,k)} \\
\text{(skew-symmetry of $\hat{\mat{S}}$)} \qquad 
	=& \frac{1}{2} \sum_{j,k=0}^{N} \entVar^T_j (\hat{S}_{jk} - \hat{S}_{kj}) \state{f}^{*}_{(j,k)} 
	\\ 
\text{(re-index and symmetry of $\state{f}^{*}$)} \qquad 
	=& \frac{1}{2} \sum_{j,k=0}^{N} \hat{S}_{jk} (\entVar^T_j - \entVar^T_k)^T \state{f}^{*}_{(j,k)} 
	\\ 
\text{(definition of $r$, \eqref{eq:EntProdVol_cons})} \qquad 
	=& \frac{1}{2}  
	\sum_{j,k=0}^N \hat{S}_{jk} (\Psi_j - \Psi_k - r_{(j,k)}) 
	\\
\text{(re-index and skew-symmetry of $\hat{\mat{S}}$)} \qquad 
	=&  
	\sum_{j,k=0}^N \hat{S}_{jk} \left( \Psi_j - \frac{1}{2}  r_{(j,k)}) \right) 
	\\
\text{(def. of $\hat{\mat{S}}$ and SBP properties, \eqref{eq:genSBPprop})} \qquad
	=&  
	 \sum_{j=0}^N \Psi_j \underbrace{\sum_{k=0}^N 2 Q_{jk}}_{=0}
	- \sum_{j=0}^N \Psi_j \underbrace{\sum_{k=0}^N \hat{B}_{jk}}_{= \ell_j (+1) - \ell_j (-1)}
	- \frac{1}{2} \sum_{j,k=0}^N \hat{S}_{jk} r_{(j,k)}   
	\\ 
=&  
	 {\Psi}_L - {\Psi}_R - \frac{1}{2} \sum_{j,k=0}^N \hat{S}_{jk} r_{(j,k)},
\end{align*}
where ${\Psi}_L$ and ${\Psi}_R$ denote the interpolation of the entropy potential to the left and right boundaries of the element, respectively.

Next, we analyze the \textit{new} terms that connect all degrees of freedom with the left boundary:
\begin{align*} 
(b)
	=& \sum_{j=0}^N \entVar_j^T \ell_{j}(-1) 
	\left[
		\state{f}^* \left( \state{u}_j , \tilde{\state{u}}_L \right)
		- \sum_{k=0}^N \ell_{k}(-1) \, \state{f}^* \left( \tilde{\state{u}}_L , \state{u}_k \right)
	\right]
\\
\text{(eval. $\entVar$ on the boundary)} \qquad 
	=& \sum_{j=0}^N \entVar_j^T \ell_{j}(-1) \state{f}^* \left( \state{u}_j , \tilde{\state{u}}_L \right)
	- \sum_{k=0}^N \tilde{\entVar}_L^T \ell_{k}(-1) \, \state{f}^* \left( \tilde{\state{u}}_L , \state{u}_k \right)
\\
\text{(re-index and symmetry of $\state{f}^{*}$)} \qquad 
	=& \sum_{j=0}^N \ell_{j}(-1) (\entVar_j - \tilde{\entVar}_L)^T \state{f}^* \left( \state{u}_j , \tilde{\state{u}}_L \right)
\\
\text{(definition of $\tilde{r}$, \eqref{eq:EntProdTilde_cons},\eqref{eq:EntProdVol_cons})} \qquad 
	=& \sum_{j=0}^N \ell_{j}(-1) (\Psi_j - \tilde{\Psi}_L - \tilde{r}_{(j,L)})
\\
	=& {\Psi}_L - \tilde{\Psi}_L - \sum_{j=0}^N \ell_{j}(-1) \tilde{r}_{(j,L)}
\end{align*}
where $\tilde{\entVar}_L$ is the simple interpolation of the entropy variables to the left boundary, but we write it with a tilde since it also corresponds to the \textit{entropy-projected} quantity, and we introduce the entropy-projected entropy potential,
\begin{equation} \label{eq:entropyProj_Potential}
\tilde{\Psi}_L := \Psi \left( \sum_{i=0}^N \ell_i(-1) \entVar (\state{u}_i) \right).
\end{equation}

The entropy balance of the left boundary term reads
\begin{equation}
(c) = \sum_{j=0}^N \entVar_j^T \ell_{j}(-1) \numfluxb{f}^a \left( \tilde{\state{u}}_L, \tilde{\state{u}}^{+}_L \right)
= \tilde{\entVar}_L^T \numfluxb{f}^a \left( \tilde{\state{u}}_L, \tilde{\state{u}}^{+}_L \right).
\end{equation}
Terms $(d)$ and $(e)$ at the right boundary are analyzed in the same form as terms $(b)$ and $(c)$.
Gathering all contributions we obtain
\begin{align}
\sum_{j=0}^N \omega_j J \dot{S}_j  &= -(a)+(b)+(c)-(d)-(e) 
\nonumber \\
&= 
\tilde{\entVar}_L^T \numfluxb{f}^a \left( \tilde{\state{u}}_L, \tilde{\state{u}}^{+}_L \right)
- \tilde{\Psi}_L
- \tilde{\entVar}_R^T \numfluxb{f}^a \left( \tilde{\state{u}}_R, \tilde{\state{u}}^{+}_R \right)
+ \tilde{\Psi}_R
+ \frac{1}{2} \sum_{j,k=0}^N \hat{S}_{jk} r_{(j,k)}
+ \sum_{j=0}^N \left( \ell_{j}(+1) \tilde{r}_{(j,R)} - \ell_{j}(-1) \tilde{r}_{(j,L)} \right).
\end{align}
Finally, we sum and subtract the following outer terms,
\begin{align} \label{eq:outerTerms}
\sum_{j=0}^N \omega_j J \dot{S}_j =& 
\tilde{\entVar}_L^T \numfluxb{f}^a \left( \tilde{\state{u}}_L, \tilde{\state{u}}^{+}_L \right)
- \tilde{\Psi}_L
- \tilde{\entVar}_R^T \numfluxb{f}^a \left( \tilde{\state{u}}_R, \tilde{\state{u}}^{+}_R \right)
+ \tilde{\Psi}_R
+ \frac{1}{2} \sum_{j,k=0}^N \hat{S}_{jk} r_{(j,k)}
+ \sum_{j=0}^N \left( \ell_{j}(+1) \tilde{r}_{(j,R)} - \ell_{j}(-1) \tilde{r}_{(j,L)} \right)
\nonumber \\
&
+ \frac{1}{2} \left(
(\tilde{\entVar}^+_L)^T  \numfluxb{f}^a \left( \tilde{\state{u}}_L, \tilde{\state{u}}^{+}_L \right) 
- \tilde{\Psi}^+_L 
- (\tilde{\entVar}^+_R)^T  \numfluxb{f}^a \left( \tilde{\state{u}}_R, \tilde{\state{u}}^{+}_R \right)
+ \tilde{\Psi}^+_R
\right)
\nonumber\\
&
- \frac{1}{2} \left(
(\tilde{\entVar}^+_L)^T  \numfluxb{f}^a \left( \tilde{\state{u}}_L, \tilde{\state{u}}^{+}_L \right)
- \tilde{\Psi}^+_L 
- (\tilde{\entVar}^+_R)^T  \numfluxb{f}^a \left( \tilde{\state{u}}_R, \tilde{\state{u}}^{+}_R \right)
+ \tilde{\Psi}^+_R
\right),
\end{align}
where $\tilde{\entVar}^+_L$, $\tilde{\entVar}^+_R$, $\tilde{\Psi}^+_L$ and $\tilde{\Psi}^+_R$ are the outer entropy variables and potentials.
Simplifying using the definitions of the numerical entropy flux, \eqref{eq:numEntFlux_cons}, and the surface entropy production \eqref{eq:EntProd_cons}, we obtain 
\begin{align}
\sum_{j=0}^N \omega_j J \dot{S}_j = 
\numflux{f}^S \left( \tilde{\state{u}}_L, \tilde{\state{u}}^{+}_L \right)
-\numflux{f}^S \left( \tilde{\state{u}}_R, \tilde{\state{u}}^{+}_R \right) 
&+ \frac{1}{2} \left[
	\hat{r} \left( \tilde{\state{u}}_L, \tilde{\state{u}}^{+}_L \right)
  + \hat{r} \left( \tilde{\state{u}}_R, \tilde{\state{u}}^{+}_R \right)
\right]
\nonumber \\
&+ \frac{1}{2} \sum_{j,k=0}^N \hat{S}_{jk} r_{(j,k)}
+ \sum_{j=0}^N \left( \ell_{j}(+1) \tilde{r}_{(j,R)} - \ell_{j}(-1) \tilde{r}_{(j,L)} \right),
\end{align}

\end{proof}
The proof of EC/ES for the three-dimensional curvilinear discretization can also be found in \cite{chan2019efficient}.

\subsection{Entropy-Stable Gauss-DGSEM for the Non-Conservative GLM-MHD System} \label{sec:Gauss_DGSEM_MHD}

Unfortunately, it is not trivial to carry over the LGL-DGSEM discretization of the non-conservative GLM-MHD system \eqref{eq:LGL_DGSEM_GLMMHD} to Gauss points.
The direct way to carry over \eqref{eq:LGL_DGSEM_GLMMHD} to Gauss is to simply add the surface and volume numerical non-conservative terms to \eqref{eq:Gauss_DGSEM_cons}.
However, the resulting discretization is not entropy consistent, as we show in Appendix \ref{sec:FirstAttemptGaussMHD}.
Instead, in this paper we propose the following novel Gauss-DGSEM discretization of the GLM-MHD equations:
\small
\begin{empheq}[box=\fbox]{align} \label{eq:Gauss_DGSEM_GLMMHD}
J \omega_j & \dot{\state{u}}_j 
+ \sum_{k=0}^N \hat{S}_{jk} \left( \state{f}^{*}_{(j,k)} + \numnonconsD{\Jan}_{(j,k)} \right)
\nonumber \\
-& \ell_{j}(-1) \left[
   \state{f}^* \left( \state{u}_j , \tilde{\state{u}}_L \right)
 + \numnonconsD{\Jan} \left( \state{u}_j , \tilde{\state{u}}_L \right)
 - \sum_{k=0}^N \ell_{k}(-1) \left( 
 	\state{f}^* \left( \tilde{\state{u}}_L , \state{u}_k \right)
   +\numnonconsD{\Jan} \left( \tilde{\state{u}}_L , \state{u}_k \right) 
 \right)
 + \numfluxb{f}^a \left( \tilde{\state{u}}_L, \tilde{\state{u}}^{+}_L \right)
 + \numnonconsD{\Jan} \left( \tilde{\state{u}}_L, \tilde{\state{u}}^{+}_L \right)
 \right]
\nonumber\\
+& \ell_{j}(+1) \left[
 \state{f}^* \left( \state{u}_j , \tilde{\state{u}}_R \right)
 + \numnonconsD{\Jan} \left( \state{u}_j , \tilde{\state{u}}_R \right)
 - \sum_{k=0}^N \ell_{k}(+1) \left(
 	 \state{f}^* \left( \tilde{\state{u}}_R  , \state{u}_k \right)
 	 +\numnonconsD{\Jan} \left( \tilde{\state{u}}_R , \state{u}_k \right) 
 \right)
 + \numfluxb{f}^a \left( \tilde{\state{u}}_R, \tilde{\state{u}}^{+}_R \right)
 + \numnonconsD{\Jan} \left( \tilde{\state{u}}_R, \tilde{\state{u}}^{+}_R \right)
\right]
= \state{0}.
\end{empheq}
\normalsize

\begin{remark} \label{rem:newLGL=Marvins}
Using \eqref{eq:Gauss_DGSEM_GLMMHD}, it is possible to reformulate the LGL discretization of the GLM-MHD system, \eqref{eq:LGL_DGSEM_GLMMHD}, as
\begin{empheq}{align} \label{eq:LGL_DGSEM_GLMMHD_cleanedUp} 
J \omega_j \dot{\state{u}}_j 
+ &\sum_{k=0}^N S_{jk} \left( \state{f}^{*}_{(j,k)} + \numnonconsD{\Jan}_{(j,k)} \right)
- \delta_{j0} \left( \numfluxb{f}^a_{(0,L)} + \numnonconsD{\Jan}_{(0,L)} \right)
+ \delta_{jN} \left( \numfluxb{f}^a_{(N,R)} + \numnonconsD{\Jan}_{(N,R)} \right)
= \state{0},
\end{empheq}
where the volume and surface numerical non-conservative terms are now consistent \textbf{and} interchangeable, because they are the same term, and there is no need for Bohm et al.'s \cite{Bohm2018} volume numerical non-conservative term, $\Jan^*_{(j,k)}$.

It can be shown that \eqref{eq:LGL_DGSEM_GLMMHD_cleanedUp} is algebraically equivalent to \eqref{eq:LGL_DGSEM_GLMMHD} using the identity
\begin{equation} \label{eq:nonConsIdentity}
    \Jan^*_{(j,k)} = 2\numnonconsD{\Jan}_{(j,k)} - \Jan_j,
\end{equation}
which is obtained by combining \eqref{eq:DiamondFluxes} and \eqref{eq:volNonCons_D1}.
The new volume non-conservative term then reads
\begin{align*}
    \sum_{k=0}^N S_{jk} \numnonconsD{\Jan}_{(j,k)}
    &= \sum_{k=0}^N \left( 2Q_{jk} - B_{jk} \right) \numnonconsD{\Jan}_{(j,k)} \\
    &= \sum_{k=0}^N \left( Q_{jk} (\Jan^*_{(j,k)} + \Jan_j) - B_{jk} \numnonconsD{\Jan}_{(j,k)} \right) \\
    &= \sum_{k=0}^N \left( Q_{jk} \Jan^*_{(j,k)} - B_{jk} \numnonconsD{\Jan}_{(j,k)} \right) \\
\text{(LGL boundary matrix, \eqref{eq:Bmat_LGL})} \qquad 
    &= \sum_{k=0}^N Q_{jk} \Jan^*_{(j,k)} - \delta_{jN} \Jan_N + \delta_{j0} \Jan_0,
\end{align*}
which is equivalent to the non-conservative terms in \eqref{eq:LGL_DGSEM_GLMMHD}.
\end{remark}

\begin{lemma} \label{lemma:Entropy_Gauss_DGSEM_GLMMHD}
The semi-discrete entropy balance of the Gauss-DGSEM discretization of the non-conservative GLM-MHD system, \eqref{eq:Gauss_DGSEM_GLMMHD}, integrating over an entire element, reads
\begin{align} \label{eq:Entropy_Gauss_DGSEM_GLMMHD}
\sum_{j=0}^N \omega_j J \dot{S}_j = 
\numflux{f}^S \left( \tilde{\state{u}}_L, \tilde{\state{u}}^{+}_L \right)
-\numflux{f}^S \left( \tilde{\state{u}}_R, \tilde{\state{u}}^{+}_R \right) 
&+ \frac{1}{2} \left[
	\hat{r} \left( \tilde{\state{u}}_L, \tilde{\state{u}}^{+}_L \right)
  + \hat{r} \left( \tilde{\state{u}}_R, \tilde{\state{u}}^{+}_R \right)
\right]
\nonumber \\
&+ \frac{1}{2} \sum_{j,k=0}^N \hat{S}_{jk} r_{(j,k)}
+ \sum_{j=0}^N \left( \ell_{j}(+1) \tilde{r}_{(j,R)} - \ell_{j}(-1) \tilde{r}_{(j,L)} \right),
\end{align}
where the last two sums denote the contribution of the volumetric entropy production terms \eqref{eq:EntProdVol_MHD}, $\tilde{r}_{(j,L)}$ and $\tilde{r}_{(j,R)}$ are again the volumetric entropy production terms between the solution at node $j$ and the entropy-projected solution at the left boundary and right boundary, respectively \eqref{eq:EntProdTilde_cons}, and the surface numerical entropy flux and entropy production are defined as in \eqref{eq:numEntFlux_MHD}  and \eqref{eq:EntProd_MHD}.
\end{lemma}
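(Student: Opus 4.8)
The plan is to follow the proof of Lemma \ref{lemma:Entropy_Gauss_DGSEM_cons} step for step, exploiting the fact that in \eqref{eq:Gauss_DGSEM_GLMMHD} every non-conservative term $\numnonconsD{\Jan}$ sits in exactly the same structural slot as its conservative flux partner $\state{f}^*$. First I would contract \eqref{eq:Gauss_DGSEM_GLMMHD} with the nodal entropy variables $\entVar_j$ and sum over the element, producing the same five groups $(a)$--$(e)$ as in \eqref{eq:EntBalance_Gauss_cons}, except that each $\state{f}^*$ is now accompanied by a $\numnonconsD{\Jan}$ term. Because the conservative pieces are untouched, their contributions are already known from Lemma \ref{lemma:Entropy_Gauss_DGSEM_cons}; the whole task therefore reduces to analyzing the new non-conservative pieces and verifying that, upon combination, they promote the conservation-law entropy-production terms \eqref{eq:EntProd_cons}--\eqref{eq:EntProdVol_cons} to their GLM-MHD counterparts \eqref{eq:EntProd_MHD}--\eqref{eq:EntProdVol_MHD}.

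The decisive algebraic step is the volume term $(a)$. Where the symmetry of $\state{f}^*$ allowed the skew-symmetry of $\hat{\mat{S}}$ to collapse the conservative part into entropy-potential differences, the term $\numnonconsD{\Jan}$ is \emph{not} symmetric in its arguments. I would instead write the non-conservative contribution to $(a)$, using skew-symmetry of $\hat{\mat{S}}$ together with a relabeling of the summation indices, as
\begin{equation*}
\sum_{j,k=0}^N \hat{S}_{jk}\, \entVar_j^T\, \numnonconsD{\Jan}_{(j,k)}
= \frac{1}{2}\sum_{j,k=0}^N \hat{S}_{jk}\left( \entVar_j^T\, \numnonconsD{\Jan}_{(j,k)} - \entVar_k^T\, \numnonconsD{\Jan}_{(k,j)} \right).
\end{equation*}
The parenthesized expression is exactly the negative of the additional non-conservative contribution in the GLM-MHD volumetric entropy production \eqref{eq:EntProdVol_MHD}, so it merges with the conservative part to reconstitute $-\frac{1}{2}\sum_{j,k}\hat{S}_{jk}\, r_{(j,k)}$ with $r$ now carrying the full MHD definition \eqref{eq:EntProdVol_MHD}. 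Crucially, since the MHD entropy flux potential \eqref{eq:entPotential_1D} already absorbs the Powell contraction $\theta B_1$, the interpolated potential differences $\Psi_L - \Psi_R$ emerge unchanged, and this same skew-symmetry-plus-relabeling identity is the engine driving every remaining group.

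For the boundary-coupling terms $(b)$ and $(d)$, I would evaluate $\entVar$ at the element boundary in the inner sums to produce the entropy-projected variables $\tilde{\entVar}_L$, $\tilde{\entVar}_R$, precisely as in the conservative proof, and then recognize the resulting asymmetric non-conservative combinations as the MHD corrections to the one-sided volumetric productions $\tilde{r}_{(j,L)}$, $\tilde{r}_{(j,R)}$ defined through \eqref{eq:EntProdTilde_cons} and \eqref{eq:EntProdVol_MHD}. The boundary terms $(c)$ and $(e)$ collapse, via $\sum_j \entVar_j^T \ell_j(\mp 1) = \tilde{\entVar}_{L,R}^T$, to contractions of the entropy-projected variables against the surface flux \emph{and} its non-conservative partner; the add-and-subtract symmetrization with the outer states $\tilde{\state{u}}^+_L$, $\tilde{\state{u}}^+_R$ from \eqref{eq:outerTerms} then assembles the MHD numerical entropy flux \eqref{eq:numEntFlux_MHD} and surface entropy production \eqref{eq:EntProd_MHD}, which carry the symmetric pair $\frac{1}{2}\entVar_j^T\numnonconsD{\Jan}_{(j,k)} + \frac{1}{2}\entVar_k^T\numnonconsD{\Jan}_{(k,j)}$. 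Because \eqref{eq:Gauss_DGSEM_GLMMHD} reduces to \eqref{eq:Gauss_DGSEM_cons} when $\numnonconsD{\Jan}\equiv\state{0}$, every intermediate identity specializes to the conservative case, providing a running consistency check.

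The main obstacle I anticipate is bookkeeping rather than conceptual: because $\numnonconsD{\Jan}$ carries an \emph{ordered} pair of indices and is not symmetric, one must track scrupulously which argument is the "inner" node and which is the "outer" state in each of $(a)$--$(e)$, and how the entropy-projection operator interacts with the one-sided interpolation weights $\ell_j(\pm 1)$. In particular, confirming that the asymmetric pieces reassemble into exactly the signed combination $\entVar_k^T\numnonconsD{\Jan}_{(k,j)} - \entVar_j^T\numnonconsD{\Jan}_{(j,k)}$ of \eqref{eq:EntProdVol_MHD}, rather than a transposed or mis-signed variant, is where a sign or relabeling error would most plausibly intrude.
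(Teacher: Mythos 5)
Your proposal is correct and follows essentially the same route as the paper's proof: contract \eqref{eq:Gauss_DGSEM_GLMMHD} with the entropy variables, use the skew-symmetry of $\hat{\mat{S}}$ plus index relabeling to fold the asymmetric pair $\entVar_j^T\numnonconsD{\Jan}_{(j,k)} - \entVar_k^T\numnonconsD{\Jan}_{(k,j)}$ into the MHD volumetric production \eqref{eq:EntProdVol_MHD}, collapse the boundary-coupling sums via the entropy-projected variables into the $\tilde{r}$ terms, and finish with the add-and-subtract symmetrization of \eqref{eq:outerTerms} to recover \eqref{eq:numEntFlux_MHD} and \eqref{eq:EntProd_MHD}. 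Your central identity for the volume term is exactly the manipulation the paper performs, and your observation that the potential $\Psi$ already absorbs the Powell contraction $\theta B_1$ correctly identifies why $\Psi_L - \Psi_R$ emerges unchanged.
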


\begin{proof}
The proof follows from the proof of Lemma \ref{lemma:Entropy_Gauss_DGSEM_cons}. 
We first manipulate the volume terms to obtain
\begin{align*} 
(a)
	=& \sum_{j=0}^{N} \entVar^T_j \sum_{k=0}^N \hat{S}_{jk} \left( \state{f}^{*}_{(j,k)} + \numnonconsD{\Jan}_{(j,k)} \right) \\
\text{(skew-sym. of $\hat{\mat{S}}$, sym. of $\state{f}^*$ \& re-index)} \qquad 
	=& \frac{1}{2} \sum_{j,k=0}^{N} \hat{S}_{jk} 
	\left( 
	(\entVar^T_j - \entVar^T_k)^T \state{f}^{*}_{(j,k)} 
	+ \entVar^T_j \numnonconsD{\Jan}_{(j,k)}
	- \entVar^T_k \numnonconsD{\Jan}_{(k,j)}
	\right)
	\\ 
\text{(definition of $r$, \eqref{eq:EntProdVol_MHD})} \qquad 
	=& \frac{1}{2}  
	\sum_{j,k=0}^N \hat{S}_{jk} (\Psi_j - \Psi_k - r_{(j,k)}) 
	\\
\text{(re-index, def. of $\hat{\mat{S}}$ and SBP properties, \eqref{eq:genSBPprop})} \qquad
	=&  
	 {\Psi}_L - {\Psi}_R - \frac{1}{2} \sum_{j,k=0}^N \hat{S}_{jk} r_{(j,k)},
\end{align*}
where we again use the interpolation of the entropy potential at the boundaries, ${\Psi}_L$ and ${\Psi}_R$.

Next, analyzing the \textit{new} terms that connect all degrees of freedom with the left boundary we find
\begin{align*} 
(b)
	=& \sum_{j=0}^N \entVar_j^T \ell_{j}(-1) 
	\left[
		\state{f}^* \left( \state{u}_j , \tilde{\state{u}}_L \right)
		+\numnonconsD{\Jan} \left( \state{u}_j , \tilde{\state{u}}_L \right)
		- \sum_{k=0}^N \ell_{k}(-1) 
		\left( \state{f}^* \left( \tilde{\state{u}}_L , \state{u}_k \right)
		+\numnonconsD{\Jan} \left( \tilde{\state{u}}_L , \state{u}_j \right)
		\right)
	\right]
\\
\text{(eval. $\tilde{\entVar}$ \& re-index)} \qquad 
	=& \sum_{j=0}^N \ell_{j}(-1) \left(
	(\entVar_j - \tilde{\entVar}_L)^T \state{f}^* \left( \state{u}_j , \tilde{\state{u}}_L \right)
	+ \entVar_j^T \numnonconsD{\Jan} \left( \state{u}_j , \tilde{\state{u}}_L \right)
	- \tilde{\entVar}_L^T \numnonconsD{\Jan} \left( \tilde{\state{u}}_L , \state{u}_j \right)
	\right)
\\
\text{(def. of $\tilde{r}$, \eqref{eq:EntProdTilde_cons},\eqref{eq:EntProdVol_MHD})} \qquad 
	=& {\Psi}_L - \tilde{\Psi}_L - \sum_{j=0}^N \ell_{j}(-1) \tilde{r}_{(j,L)},
\end{align*}
where $\tilde{\entVar}_L$ is again the interpolation of the entropy variables to the left boundary, and we use the entropy-projected entropy potential, \eqref{eq:entropyProj_Potential}.

The entropy balance of the left boundary term reads
\begin{equation}
(c) = \sum_{j=0}^N \entVar_j^T \ell_{j}(-1)
\left[
 \numfluxb{f}^a \left( \tilde{\state{u}}_L, \tilde{\state{u}}^{+}_L \right)
+\numnonconsD{\Jan} \left( \tilde{\state{u}}_L, \tilde{\state{u}}^{+}_L \right)
\right]
= \tilde{\entVar}_L^T 
\left[
 \numfluxb{f}^a \left( \tilde{\state{u}}_L, \tilde{\state{u}}^{+}_L \right)
+\numnonconsD{\Jan} \left( \tilde{\state{u}}_L, \tilde{\state{u}}^{+}_L \right)
\right].
\end{equation}
As in the previous proof, terms $(d)$ and $(e)$ are analyzed in the same form as terms $(b)$ and $(c)$.
Gathering all contributions we obtain
\begin{align}
\sum_{j=0}^N \omega_j J \dot{S}_j  =& -(a)+(b)+(c)-(d)-(e) 
\nonumber \\
=& 
\tilde{\entVar}_L^T 
\left[
 \numfluxb{f}^a \left( \tilde{\state{u}}_L, \tilde{\state{u}}^{+}_L \right)
+\numnonconsD{\Jan} \left( \tilde{\state{u}}_L, \tilde{\state{u}}^{+}_L \right)
\right]
- \tilde{\Psi}_L
- \tilde{\entVar}_R^T 
\left[
 \numfluxb{f}^a \left( \tilde{\state{u}}_R, \tilde{\state{u}}^{+}_R \right)
+\numnonconsD{\Jan} \left( \tilde{\state{u}}_R, \tilde{\state{u}}^{+}_R \right)
\right]
+ \tilde{\Psi}_R
\nonumber \\
&
+ \frac{1}{2} \sum_{j,k=0}^N \hat{S}_{jk} r_{(j,k)}
+ \sum_{j=0}^N \left( \ell_{j}(+1) \tilde{r}_{(j,R)} - \ell_{j}(-1) \tilde{r}_{(j,L)} \right).
\end{align}
Finally, summing and subtracting the outer terms, as in \eqref{eq:outerTerms}, and simplifying we obtain 
\begin{align}
\sum_{j=0}^N \omega_j J \dot{S}_j = 
\numflux{f}^S \left( \tilde{\state{u}}_L, \tilde{\state{u}}^{+}_L \right)
&-\numflux{f}^S \left( \tilde{\state{u}}_R, \tilde{\state{u}}^{+}_R \right) 
+ \frac{1}{2} \left[
	\hat{r} \left( \tilde{\state{u}}_L, \tilde{\state{u}}^{+}_L \right)
  + \hat{r} \left( \tilde{\state{u}}_R, \tilde{\state{u}}^{+}_R \right)
\right]
\nonumber \\
&+ \frac{1}{2} \sum_{j,k=0}^N \hat{S}_{jk} r_{(j,k)}
+ \sum_{j=0}^N \left( \ell_{j}(+1) \tilde{r}_{(j,R)} - \ell_{j}(-1) \tilde{r}_{(j,L)} \right).
\end{align}

\end{proof}

\subsubsection{Extension to 3D Curvilinear Meshes}
In one dimension, it was trivial to use the surface numerical non-conservative term, $\numnonconsD{\Jan}$, in the volume integral and in the terms that connect all nodes with the surface.
However, in three-dimensional curvilinear meshes, we require the volume numerical fluxes (and non-conservative terms) to perform \textit{metric dealiasing} \cite{Gassner2016}.
For the conservative two-point fluxes, the metric dealiasing is achieved with the simple average of the metric terms, but the metric dealiasing is not as trivial with the numerical non-conservative terms.

Fortunately, the generalized "surface" numerical non-conservative term, introduced by \citet[Appendix C.1]{Rueda-Ramirez2020} as an auxiliary variable for the proofs, provides a consistent way to do the metric dealiasing, which also guarantees entropy consistency, as we show below.

We obtain the three-dimensional Gauss-DGSEM discretization of the GLM-MHD system on curvilinear meshes by extending the one-dimensional variant using tensor-product basis expansions.
For brevity, we use the same polynomial degree in all three spatial directions, although different polynomial degrees can be used for different directions, as in \cite{RuedaRamirez2019,RuedaRamirez2019a}. The extended version of \eqref{eq:Gauss_DGSEM_GLMMHD} reads

%
\small
\begin{empheq}[box=\fbox]{align} \label{eq:Gauss_DGSEM_GLMMHD_3D}
J_{ijk} \omega_{ijk} \dot{\state{u}}_{ijk} 
&+ 
\omega_{jk} \left\lbrace
  \sum_{m=0}^N \hat{S}_{im} 
    \tilde{\state{\Gamma}}^1_{(i,m)jk}
+ \left[\ell_{i}(\xi_b) \Big(
    \tilde{\state{\Gamma}}^1\!\left( \state{u}_{ijk} , \tilde{\state{u}}_b \right)
 - \sum_{m=0}^N \ell_{m}(\xi_b) 
    \tilde{\state{\Gamma}}^1\!\left( \tilde{\state{u}}_b , \state{u}_{mjk} \right)
 + \numfluxb{\Gamma}^1\!\left( \tilde{\state{u}}_b, \tilde{\state{u}}^{+}_b \right)
 \Big)\right]_{\xi_b=-1, \, b\mapsto{Ljk}}^{\xi_b=+1, \, b \mapsto{Rjk}}
\right\rbrace
\nonumber\\
&+ 
\omega_{ik} \left\lbrace
  \sum_{m=0}^N \hat{S}_{jm} 
    \tilde{\state{\Gamma}}^2_{i(j,m)k}
+ \left[\ell_{j}(\eta_b) \Big(
    \tilde{\state{\Gamma}}^2\!\left( \state{u}_{ijk} , \tilde{\state{u}}_b \right)
 - \sum_{m=0}^N \ell_{m}(\eta_b) 
    \tilde{\state{\Gamma}}^2\!\left( \tilde{\state{u}}_b , \state{u}_{imk} \right)
 + \numfluxb{\Gamma}^2\!\left( \tilde{\state{u}}_b, \tilde{\state{u}}^{+}_b \right)
 \Big)\right]_{\eta_b=-1, \, b \mapsto{iLk}}^{\eta_b=+1, \, b \mapsto{iRk}}
\right\rbrace
\nonumber\\
&+ 
\omega_{ij} \left\lbrace
  \sum_{m=0}^N \hat{S}_{km} 
    \tilde{\state{\Gamma}}^3_{ij(k,m)}
+ \left[\ell_{k}(\zeta_b) \Big(
    \tilde{\state{\Gamma}}^3\!\left( \state{u}_{ijk} , \tilde{\state{u}}_b \right)
 - \sum_{m=0}^N \ell_{m}(\zeta_b) 
    \tilde{\state{\Gamma}}^3\!\left( \tilde{\state{u}}_b , \state{u}_{ijm} \right)
 + \numfluxb{\Gamma}^3\!\left( \tilde{\state{u}}_b, \tilde{\state{u}}^{+}_b \right)
 \Big)\right]_{\zeta_b=-1, \, b \mapsto{ijL}}^{\zeta_b=+1, \, b\mapsto{ijR}}
\right\rbrace
= \state{0}.
\end{empheq}
\normalsize

The mapping Jacobians, $J_{ijk}$, which may now be different at each degree of freedom of the element, and the contravariant basis vectors, $\vec{a}^m_{ijk}=\Nabla \xi^m$, define the mapping from reference space to physical space, $(\xi^1,\xi^2,\xi^3) \in [-1,1]^3 \rightarrow (x,y,z) \in \Omega$ \cite{Kopriva2006,kopriva2009implementing}. 

Moreover, the following new conventions are used:
\begin{itemize}
\item The two- and three-dimensional quadrature weights are defined from the one-dimensional weights as
\begin{equation}
\omega_{ij} := \omega_i \omega_j, ~~~~~ \omega_{ijk} := \omega_i \omega_j \omega_k.
\end{equation}
\item The newly introduced transformed two-point terms in the volume are defined as
\begin{equation} \label{eq:GammaTerm}
    \tilde{\state{\Gamma}}^1_{(i,m)jk}
    :=
    \tilde{\state{\Gamma}}^1  \left( \state{u}_{ijk} , \state{u}_{mjk} \right)
    :=
    \tilde{\state{f}}^{1*}_{(i,m)jk}
    +\numnonconsDxi{\tilde{\Jan}}_{(i,m)jk}\,.
\end{equation}


\item The volume numerical two-point fluxes are defined as
\begin{align}
\tilde{\state{f}}^{1*}_{(i,m)jk} &:= \blocktensor{f}^{*}(\state{u}_{ijk}, \state{u}_{mjk}) \cdot \avg{J\vec{a}^1}_{(i,m)jk}, ~~~~~
\tilde{\state{f}}^{2*}_{i(j,m)k} := \blocktensor{f}^{*}(\state{u}_{ijk}, \state{u}_{imk}) \cdot \avg{J\vec{a}^2}_{i(j,m)k}, \nonumber \\
\tilde{\state{f}}^{3*}_{ij(k,m)} &:= \blocktensor{f}^{*}(\state{u}_{ijk}, \state{u}_{ijm}) \cdot \avg{J\vec{a}^3}_{ij(k,m)},
\end{align}
where $\blocktensor{f}^{*}$ is a symmetric and consistent two-point averaging flux function, which can be selected to provide entropy conservation \cite{Gassner2016,Gassner2018,Manzanero2020}.

\item The generalized "surface" numerical non-conservative term between two nodes that are aligned in the $\xi$-direction reads \cite[Appendix C.1]{Rueda-Ramirez2020}
\begin{empheq}[box=\fbox]{align} \label{eq:numNonCons3D}
\numnonconsDxi{\tilde{\Jan}}_{(i,m)jk} := 
\frac{1}{2} \left[ (\vec{B}\cdot J\vec{a}^1)_{ijk} + \vec{B}_{mjk} \cdot \avg{J\vec{a}^1}_{(i,m)jk} \right] \phiMHD_{ijk}  +
\phiGLM_{ijk} \cdot J\vec{a}^1_{ijk} \avg{\psi}_{(i,m)jk},
\end{empheq}
and the other directions are constructed in an analogous way.
We write the term in a box since it is the most important ingredient for the three-dimensional entropy-stable Gauss-DGSEM discretization of the GLM-MHD equations on curvilinear meshes.

\item On an element boundary, again an \textit{entropy-projected} solution is used, such as 
\begin{equation}
\tilde{\state{u}}_{Ljk} := \state{u} \left( \sum_{i=0}^N \ell_i(-1) \entVar (\state{u}_{ijk}) \right), \, 
\tilde{\state{u}}_{Rjk} := \state{u} \left( \sum_{i=0}^N \ell_i(+1) \entVar (\state{u}_{ijk}) \right).
\label{eq:entropy_projection_3d}
\end{equation}

\item The new two-point term at the element interface is defined as
\begin{equation}
    \numfluxb{{\Gamma}}^1  \left(\tilde{\state{u}}_{b} , \tilde{\state{u}}^+_{b} \right)
    :=
    \numfluxb{{f}}^{1a} \left(\tilde{\state{u}}_{b} ,\tilde{\state{u}}^+_{b} \right)
    +\numnonconsDxi{\hat{\Jan}} \left(\tilde{\state{u}}_{b} ,\tilde{\state{u}}^+_{b} \right)\,,
\end{equation}
 and depends on the surface numerical flux and non-conservative surface term, which both include the transformation with the surface metric. The solution from the neighbor element is marked by a $+$.
Note that on a conforming mesh, the normal surface metric is unique at the element interface, and thus the numerical non-conservative term from \eqref{eq:numNonCons3D} evaluated at the right element interface ($\xi_b=+1$) reduces to
\begin{equation}
\numnonconsDxi{\hat{\Jan}} \left( \tilde{\state{u}}_{Rjk} , \tilde{\state{u}}^+_{Rjk} \right) = 
J\vec{a}^1_{Rjk}
\cdot 
\left[
\avg{\vec{B}}_{(R,R^+)jk}  \phiMHD_{Rjk}  +
\phiGLM_{Rjk} \avg{\psi}_{(R,R^+)jk}
\right],
\end{equation}
where $J\vec{a}^1_{Rjk}$ is the non-normalized surface normal and $\avg{\cdot}_{(R,R^+)jk}$ denotes the average operator across the element interface. The non-conservative surface term is then equivalent the one given in \cite{Bohm2018}.
\end{itemize}

\begin{remark}
If  LGL  nodes are used, the novel three-dimensional discretization, \eqref{eq:Gauss_DGSEM_GLMMHD_3D}, is equivalent to the three-dimensional LGL-DGSEM for the GLM-MHD system of \citet{Bohm2018},
\begin{align}  \label{eq:DGSEM_MHD_3D}
J_{ijk} \omega_{ijk} \dot{\state{u}}^{\DG}_{ijk} = 
&
{
 \omega_{jk} \left( - 2 \sum_{m=0}^{N} Q_{im} \tilde{\state{f}}^{1*}_{(i,m)jk} 
- \sum_{m=0}^{N} Q_{im} \tilde{\Jan}^{1*}_{(i,m)jk} 
-  
  \delta_{i0} \left[ (\blocktensor{f} + \JanVec) \cdot J\vec{a}^1 \right]_{0jk}
+ \delta_{iN} \left[ (\blocktensor{f} + \JanVec)\cdot J\vec{a}^1 \right]_{Njk} \right) 
}
\nonumber\\ 
+& \omega_{ik} \left( - 2\sum_{m=0}^{N} Q_{jm} \tilde{\state{f}}^{2*}_{i(j,m)k}  
- \sum_{m=0}^{N} Q_{jm} \tilde{\Jan}^{2*}_{i(j,m)k} 
- 
  \delta_{j0} \left[ (\blocktensor{f} + \JanVec) \cdot J\vec{a}^2 \right]_{i0k}
+ \delta_{jN} \left[ (\blocktensor{f} + \JanVec) \cdot J\vec{a}^2 \right]_{iNk}  \right) 
\nonumber\\ 
+&\omega_{ij} \left( - 2\sum_{m=0}^{N} Q_{km} \tilde{\state{f}}^{3*}_{ij(k,m)} 
- \sum_{m=0}^{N} Q_{km} \tilde{\Jan}^{3*}_{ij(k,m)} 
- 
  \delta_{k0} \left[ (\blocktensor{f} + \JanVec) \cdot J\vec{a}^3 \right]_{ij0}
+ \delta_{kN} \left[ (\blocktensor{f} + \JanVec) \cdot J\vec{a}^3 \right]_{ijN} \right)
\nonumber\\
+&
{
\omega_{jk} \left(
  \delta_{i0} \left[\numfluxb{f}^{1a}_{(0,L)jk} + \numnonconsDxi{\hat{\Jan}}_{(0,L)jk} \right]
- \delta_{iN} \left[\numfluxb{f}^{1a}_{(N,R)jk} + \numnonconsDxi{\hat{\Jan}}_{(N,R)jk} \right] \right)
}
\nonumber\\
+& \omega_{ik} \left(
  \delta_{j0} \left[\numfluxb{f}^{2a}_{i(0,L)k} + \numnonconsDeta{\hat{\Jan}}_{i(0,L)k} \right]
- \delta_{jN} \left[\numfluxb{f}^{2a}_{i(N,R)k} + \numnonconsDeta{\hat{\Jan}}_{i(N,R)k} \right] \right)
\nonumber\\
+& \omega_{ij} \left(
  \delta_{k0} \left[\numfluxb{f}^{3a}_{ij(0,L)} + \numnonconsDzeta{\hat{\Jan}}_{ij(0,L)} \right]
- \delta_{kN} \left[\numfluxb{f}^{3a}_{ij(N,R)} + \numnonconsDzeta{\hat{\Jan}}_{ij(N,R)} \right] \right).
\end{align}
It is straight-forward to reproduce the findings of Remark \ref{rem:newLGL=Marvins}, since the non-conservative terms identity holds in 3D,
\begin{equation}
    \tilde{\Jan}^{1*}_{(i,m)jk} = 2 \numnonconsDxi{\tilde{\Jan}}_{(i,m)jk} - \JanVec_{ijk} \cdot J\vec{a}^1_{ijk},
\end{equation}
where 
\begin{equation}
\JanVec := \phiMHD \vec{B} + \phiGLM \psi.
\end{equation}
\end{remark}

\begin{lemma} \label{lemma:Entropy_Gauss_DGSEM_GLMMHD_3D}
The semi-discrete entropy balance of the Gauss-DGSEM discretization of the three-dimensional non-conservative GLM-MHD system, \eqref{eq:Gauss_DGSEM_GLMMHD_3D}, integrating over an entire curvilinear element, reads
\begin{align} \label{eq:Entropy_Gauss_DGSEM_GLMMHD_3D}
\!\!\sum_{i,j,k=0}^N\!\!\!\omega_{ijk} J_{ijk} \dot{S}_{ijk} 
=&
	+\!\! \sum_{j,k=0}^N\!\! \omega_{jk} \left[
	 \numflux{f}^{1S}\!\left( \tilde{\state{u}}_{Ljk}, \tilde{\state{u}}^{+}_{Ljk} \right)
	-\numflux{f}^{1S}\!\left( \tilde{\state{u}}_{Rjk}, \tilde{\state{u}}^{+}_{Rjk} \right)
	+ \frac{1}{2} \left( 
	\hat{r}^1\!\left( \tilde{\state{u}}_{Ljk}, \tilde{\state{u}}^{+}_{Ljk} \right)
	+ \hat{r}^1\!\left( \tilde{\state{u}}_{Rjk}, \tilde{\state{u}}^{+}_{Rjk} \right) \right) 
	\right]
\nonumber\\&
	+\!\! \sum_{i,k=0}^N\!\! \omega_{ik}  \left[
	 \numflux{f}^{2S}\!\left( \tilde{\state{u}}_{iLk}, \tilde{\state{u}}^{+}_{iLk} \right)
	-\numflux{f}^{2S}\!\left( \tilde{\state{u}}_{iRk}, \tilde{\state{u}}^{+}_{iRk} \right)
	+ \frac{1}{2} \left( 
	  \hat{r}^2\!\left( \tilde{\state{u}}_{iLk}, \tilde{\state{u}}^{+}_{iLk} \right)
	+ \hat{r}^2\! \left( \tilde{\state{u}}_{iRk}, \tilde{\state{u}}^{+}_{iRk} \right) \right) 
	\right]
\nonumber\\&
	+\!\! \sum_{i,j=0}^N\!\! \omega_{ij}  \left[
	 \numflux{f}^{3S}\!\left( \tilde{\state{u}}_{ijL}, \tilde{\state{u}}^{+}_{ijL} \right)
	-\numflux{f}^{3S}\!\left( \tilde{\state{u}}_{ijR}, \tilde{\state{u}}^{+}_{ijR} \right)
	+ \frac{1}{2} \left( 
	  \hat{r}^3\!\left( \tilde{\state{u}}_{ijL}, \tilde{\state{u}}^{+}_{ijL} \right)
	+ \hat{r}^3\!\left( \tilde{\state{u}}_{ijR}, \tilde{\state{u}}^{+}_{ijR} \right) \right) 
	\right]
\nonumber\\&
	+ \frac{1}{2} \sum_{i,j,k,m=0}^N \!\!\!\omega_{ijk}
	\left(
	\hat{S}_{im}  r^1_{(i,m)jk} + \hat{S}_{jm} r^2_{i(j,m)k} + \hat{S}_{km} r^3_{ij(k,m)}
	\right)
\nonumber\\&
    +\!\! \sum_{i,j,k=0}^N \!\!\omega_{jk} \left( \ell_{i}(+1) \tilde{r}^1_{(i,R)jk} - \ell_{i}(-1) \tilde{r}^1_{(i,L)jk} \right)
\nonumber\\&
    +\!\! \sum_{i,j,k=0}^N\!\! \omega_{ik} \left( \ell_{j}(+1) \tilde{r}^2_{i(j,R)k} - \ell_{j}(-1) \tilde{r}^2_{i(j,L)k} \right)
\nonumber\\&
    +\!\! \sum_{i,j,k=0}^N\!\! \omega_{ij} \left( \ell_{k}(+1) \tilde{r}^3_{ij(k,R)} - \ell_{k}(-1) \tilde{r}^3_{ij(k,L)} \right).
\end{align}
where the first three lines denote the entropy flux and production across boundaries of the element, the fourth line is the entropy production from the volume integral, and the last three lines denote the volumetric entropy production between each node and the entropy-projected solution at the boundaries along lines of the element.
Moreover, the numerical entropy flux and production terms on the surface and volume are defined respectively as
\begin{align} \label{eq:numEntFlux_3D} 
    \numflux{f}^{1S}_{(R,R^+)jk} &= 
    \avg{\entVar}_{(R,R^+)jk}^T \numfluxb{f}^{1a}_{(R,R^+)jk} 
    + \frac{1}{2} \entVar^T_{Rjk} \numnonconsDxi{\hat{\Jan}}_{(R,R^+)jk}
    + \frac{1}{2} \entVar^T_{R^+jk} \numnonconsDxi{\hat{\Jan}}_{(R^+,R)jk}
    - J\vec{a}^1_{Rjk} \cdot \avg{\vec{\Psi}}_{(R,R^+)jk},
\\ \label{eq:EntProd_MHD_3D}
\hat{r}^1_{(R,R^+)jk} &=
\jump{\entVar}_{(R,R^+)jk}^T 
\numfluxb{f}^{1a}_{(R,R^+)jk}
+ \entVar^T_{R^+jk} \numnonconsDxi{\hat{\Jan}}_{(R^+,R)jk}
- \entVar^T_{Rjk} \numnonconsDxi{\hat{\Jan}}_{(R,R^+)jk}
- J\vec{a}^1_{Rjk} \cdot \jump{\vec{\Psi}}_{(R,R^+)jk},
\\ \label{eq:EntProdVol_MHD_3D}
r^1_{(i,m)jk} &=
\jump{\entVar}_{(i,m)jk}^T 
\state{f}^{1*}_{(i,m)jk}
+ \entVar^T_{mjk} \numnonconsDxi{\tilde{\Jan}}_{(m,i)jk}
- \entVar^T_{ijk} \numnonconsDxi{\tilde{\Jan}}_{(i,m)jk}
- \avg{J\vec{a}^1}_{(i,m)jk} \cdot \jump{\vec{\Psi}}_{(i,m)jk},
\end{align}
for two nodes aligned in the $\xi$ direction in the volume and the right boundary. The other directions and the left boundary are defined analogously.
\end{lemma}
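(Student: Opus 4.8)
The plan is to mirror the proof of Lemma~\ref{lemma:Entropy_Gauss_DGSEM_GLMMHD}, carried out one reference direction at a time. First I would contract \eqref{eq:Gauss_DGSEM_GLMMHD_3D} with the entropy variables $\entVar_{ijk}^T$ and sum over all nodes $i,j,k$. The tensor-product construction makes the three bracketed blocks decouple, leaving three independent sums weighted by $\omega_{jk}$, $\omega_{ik}$ and $\omega_{ij}$. Along a fixed $\xi$-line, the first block is algebraically identical to the one-dimensional scheme \eqref{eq:Gauss_DGSEM_GLMMHD}, with the transformed two-point term $\tilde{\state{\Gamma}}^1 = \tilde{\state{f}}^{1*} + \numnonconsDxi{\tilde{\Jan}}$ replacing $\state{f}^* + \numnonconsD{\Jan}$ and the transformed, nodal entropy flux potential $J\vec{a}^1\cdot\vec{\Psi}$ replacing the scalar $\Psi$.

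For each direction I would then replay the one-dimensional manipulations. The volume term is reduced using the skew-symmetry of $\hat{\mat{S}}$, the symmetry of $\tilde{\state{f}}^{1*}$ (inherited from $\blocktensor{f}^{*}$ and from the symmetric averaged metric $\avg{J\vec{a}^1}$), the generalized SBP properties \eqref{eq:genSBPprop}, and the definition \eqref{eq:EntProdVol_MHD_3D} of $r^1$; the terms coupling each node to the entropy-projected boundary states \eqref{eq:entropy_projection_3d} play the role of terms $(b)$ and $(d)$, generating $\tilde{r}^1_{(i,L)jk}$, $\tilde{r}^1_{(i,R)jk}$ and turning the interpolated potential into the entropy-projected one, while the boundary terms give the raw surface contributions. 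Summing the three directions and applying the add-and-subtract outer-term trick of \eqref{eq:outerTerms} at each face then assembles the surface entropy fluxes \eqref{eq:numEntFlux_3D} and surface productions \eqref{eq:EntProd_MHD_3D}. Much of this is pure bookkeeping: rearranging \eqref{eq:EntProdVol_MHD_3D} shows that the symmetrized summand $(\entVar_{ijk}-\entVar_{mjk})^T\tilde{\state{f}}^{1*}_{(i,m)jk} + \entVar_{ijk}^T\numnonconsDxi{\tilde{\Jan}}_{(i,m)jk} - \entVar_{mjk}^T\numnonconsDxi{\tilde{\Jan}}_{(m,i)jk}$ equals $-r^1_{(i,m)jk} - \avg{J\vec{a}^1}_{(i,m)jk}\cdot\jump{\vec{\Psi}}_{(i,m)jk}$, so the non-conservative contribution is absorbed into $r^1$ by construction, which is precisely what the metric-dealiased boxed term \eqref{eq:numNonCons3D} is engineered to deliver.

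The hard part will be the telescoping of the transformed entropy potential, the only place where curvilinearity genuinely bites. Unlike the affine one-dimensional case, $\avg{J\vec{a}^1}_{(i,m)jk}\cdot\jump{\vec{\Psi}}_{(i,m)jk}$ does not collapse to a clean difference of the nodal quantity $J\vec{a}^1\cdot\vec{\Psi}$: expanding the averaged metric and invoking \eqref{eq:genSBPprop} leaves, besides the wanted boundary interpolations, a spurious volume term proportional to $\vec{\Psi}_{ijk}\cdot(\partial_\xi(J\vec{a}^1))_{ijk}$. The resolution is that the $\eta$- and $\zeta$-blocks leave the companions $\vec{\Psi}\cdot\partial_\eta(J\vec{a}^2)$ and $\vec{\Psi}\cdot\partial_\zeta(J\vec{a}^3)$, so their sum produces $\vec{\Psi}_{ijk}\cdot[\partial_\xi(J\vec{a}^1)+\partial_\eta(J\vec{a}^2)+\partial_\zeta(J\vec{a}^3)]_{ijk}$, which vanishes node-wise by the discrete metric identities built into the curvilinear mapping \cite{Kopriva2006,kopriva2009implementing}. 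I would therefore carry these metric-aliasing terms explicitly through the per-direction algebra and cancel them only after all three directions have been combined; the residual boundary aliasing between interpolated and entropy-projected potentials is then reconciled exactly as in the one-dimensional proof, producing the stated balance \eqref{eq:Entropy_Gauss_DGSEM_GLMMHD_3D}.
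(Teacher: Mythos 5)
Your proposal is correct and follows essentially the same route as the paper's proof: a per-direction replay of the one-dimensional argument with $\tilde{\state{\Gamma}}^1$ and the contravariant potential $J\vec{a}^1\cdot\vec{\Psi}$ in place of their 1D counterparts, followed by the add-and-subtract surface manipulation. You also correctly identify the one genuinely new ingredient — the residual volume term $\sum_{i,m}Q_{im}\,J\vec{a}^1_{mjk}\cdot\vec{\Psi}_{ijk}$ left over from splitting the averaged metric, which cancels only after summing all three directions via the discrete metric identities \eqref{eq:DiscMetricIdentities} — exactly as the paper does.
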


\begin{proof}
Following the same strategy as in previous proofs, we first contract the volume terms in the $\xi$ direction with the entropy variables, and compute the integral along the $\xi$ direction.
We scale with the quantity $1/{\omega_{jk}}$ for convenience to obtain
\begin{align} 
\frac{(a)^{\xi}_{jk}}{\omega_{jk}}
	=& \sum_{i=0}^{N} \entVar^T_{ijk}
  \sum_{m=0}^N \hat{S}_{im} 
    \tilde{\state{\Gamma}}^1_{(i,m)jk}
	\nonumber\\
\text{(definition of $\tilde{\state{\Gamma}}^1$, \eqref{eq:GammaTerm})} \quad 
	=& \sum_{i=0}^{N} \entVar^T_{ijk}
  \sum_{m=0}^N \hat{S}_{im} \left(
  \tilde{\state{f}}^{1*}_{(i,m)jk}
    +\numnonconsDxi{\tilde{\Jan}}_{(i,m)jk} \right)
    \nonumber	\\
\text{(skew-sym. of $\hat{\mat{S}}$, sym. of $\state{f}^{1*}$ \& re-index)} \quad 
	=& \frac{1}{2} \sum_{i,m=0}^{N} \hat{S}_{im} 
	\left( 
	(\entVar^T_{ijk} - \entVar^T_{mjk})^T \tilde{\state{f}}^{1*}_{(i,m)jk} 
	+ \entVar^T_{ijk} \numnonconsDxi{\tilde{\Jan}}_{(i,m)jk}
	- \entVar^T_{mjk} \numnonconsDxi{\tilde{\Jan}}_{(m,i)jk}
	\right)
	\nonumber\\ 
\text{(definition of $r^1$, \eqref{eq:EntProd_MHD_3D})} \quad 
	=& \frac{1}{2}  
	\sum_{i,m=0}^N \hat{S}_{im} \left( \avg{J\vec{a}^1}_{(i,m)jk} \cdot \left( \vec{\Psi}_{ijk} - \vec{\Psi}_{mjk} \right) - r^1_{(i,m)jk} \right) 
	\nonumber\\ 
\text{(re-index, sym. of $\avg{\cdot}$ \& skew-sym of $\hat{\mat{S}}$)} \quad 
	=& 
	\sum_{i,m=0}^N \hat{S}_{im} \left( \avg{J\vec{a}^1}_{(i,m)jk} \cdot \vec{\Psi}_{ijk} - \frac{1}{2}   r^1_{(i,m)jk} \right) 
	\nonumber\\ 
\text{(definition of $\hat{\mat{S}}$)} \quad 
	=& 
	\sum_{i,m=0}^N 2 Q_{im} \avg{J\vec{a}^1}_{(i,m)jk} \cdot \vec{\Psi}_{ijk}
	\nonumber\\
	&
	- \underbrace{\sum_{i,m=0}^N \hat{B}_{im} \avg{J\vec{a}^1}_{(i,m)jk} \cdot \vec{\Psi}_{ijk}}_{:= \bar{\Psi}^1_{Rjk} - \bar{\Psi}^1_{Ljk}}
	- \frac{1}{2} \sum_{i,m=0}^N \hat{S}_{im} r^1_{(i,m)jk}
	\label{eq:psiBar}\\
\text{(def. of $\avg{\cdot}$ \& SBP properties, \eqref{eq:genSBPprop})} \quad 
	=& 
	\sum_{i=0}^N  J\vec{a}^1_{ijk} \cdot \vec{\Psi}_{ijk} \underbrace{\sum_{m=0}^N Q_{im}}_{:=0}
	+
	\sum_{i,m=0}^N Q_{im}  J\vec{a}^1_{mjk} \cdot \vec{\Psi}_{ijk} 
	\nonumber\\
	&+ \bar{\Psi}^1_{Ljk} - \bar{\Psi}^1_{Rjk}
	- \frac{1}{2} \sum_{i,m=0}^N \hat{S}_{im} r^1_{(i,m)jk}
	\nonumber\\
	=&  
	 \sum_{i,m=0}^N Q_{im} J\vec{a}^1_{mjk} \cdot \vec{\Psi}_{ijk} 
	 + \bar{\Psi}^1_{Ljk} - \bar{\Psi}^1_{Rjk}
	- \frac{1}{2} \sum_{i,m=0}^N \hat{S}_{im} r^1_{(i,m)jk}.
\end{align}

The quantities introduced in \eqref{eq:psiBar} denote the contravariant entropy potentials at the boundaries, $\bar{\Psi}^1_{Ljk}$ and $\bar{\Psi}^1_{Rjk}$, which are derived using the generalized SBP properties \eqref{eq:genSBPprop},
\begin{align*}
    \sum_{i,m=0}^N \hat{B}_{im} \avg{J\vec{a}^1}_{(i,m)jk} \cdot \vec{\Psi}_{ijk}
    &=
    \sum_{i=0}^N  \vec{\Psi}_{ijk} \cdot \sum_{m=0}^N  \hat{B}_{im} \avg{J\vec{a}^1}_{(i,m)jk}
    \\
    &=
    \underbrace{\sum_{i=0}^N  \ell_i(+1)
    \vec{\Psi}_{ijk} \cdot \avg{J\vec{a}^1}_{(i,R)jk} }_{:\bar{\Psi}^1_{Rjk}}
    - \underbrace{\sum_{i=0}^N \ell_i(-1)
    \vec{\Psi}_{ijk} \cdot \avg{J\vec{a}^1}_{(i,L)jk}
    }_{:=\bar{\Psi}^1_{Ljk}}.
\end{align*}

We now obtain the entropy production by the \textit{new} terms in $\xi$ that connect all degrees of freedom with the left boundary.
Again, we scale with the two-dimensional weight, $\omega_{jk}$, to obtain
\begin{align*} 
\frac{(b)^{\xi}_{jk}}{\omega_{jk}}
	=& \sum_{i=0}^N \entVar_{ijk}^T \ell_{i}(-1) 
	\left[
		 \tilde{\state{\Gamma}}^1  \left( \state{u}_{ijk} , \tilde{\state{u}}_{Ljk} \right)
		- \sum_{m=0}^N \ell_{m}(-1) 
		\tilde{\state{\Gamma}}^1  \left( \tilde{\state{u}}_{Ljk} , \state{u}_{mjk} \right)
	\right]
\\
\text{(eval. $\tilde{\entVar}$ \& re-index)} \quad 
	=& \sum_{i=0}^N \ell_{i}(-1) \left[
	 \entVar_{ijk}^T \tilde{\state{\Gamma}}^1  \left( \state{u}_{ijk} , \tilde{\state{u}}_{Ljk} \right)
	- \tilde{\entVar}_{Ljk}^T \tilde{\state{\Gamma}}^1  \left( \tilde{\state{u}}_{Ljk} , \state{u}_{ijk} \right)
	\right]
\\
\text{(definition of $\tilde{\state{\Gamma}}^1$)} \quad 
=& \sum_{i=0}^N \ell_{i}(-1) \left[
    (\entVar_{ijk} - \tilde{\entVar}_{Ljk})^T \tilde{\state{f}}^{1*} \left( \state{u}_{ijk} , \tilde{\state{u}}_{Ljk} \right) \bigg. \right. 
    \\
    & ~~~~~~~~~~~~~~~~~~~~
    \left. + \entVar_{ijk}^T \numnonconsDxi{\tilde{\Jan}}  \left( \state{u}_{ijk} , \tilde{\state{u}}_{Ljk} \right)
	- \tilde{\entVar}_{Ljk}^T \numnonconsDxi{\tilde{\Jan}}  \left( \tilde{\state{u}}_{Ljk} , \state{u}_{ijk} \right)
	\right]
\\
\text{(definition of $\tilde{r}^1$, \eqref{eq:EntProdTilde_cons},\eqref{eq:EntProdVol_MHD_3D})} \quad 
	=& \sum_{i=0}^N \ell_{i}(-1) \left( \avg{J\vec{a}^1}_{(i,L)jk} \cdot \left( \vec{\Psi}_{ijk} - \vec{\tilde{\Psi}}_{Ljk} \right) - \tilde{r}^1_{(i,L)jk} \right)
\\
\text{(def. of $\bar{\Psi}^1_{Ljk}$ \& boundary metrics)} \quad 
	=& \bar{\Psi}^1_{Ljk} -  \left(\vec{\tilde{\Psi}}_{Ljk} \cdot J\vec{a}^1_{Ljk} \right)
	- \sum_{i=0}^N \ell_{i}(-1) \tilde{r}^1_{(i,L)jk} 
\end{align*}
where $\tilde{\entVar}_{Ljk}$ is again the interpolation of the entropy variables to the left boundary, and we use the entropy-projected entropy potential,
\begin{equation} \label{eq:entropyProj_Potential3D}
    \vec{\tilde{\Psi}}_{Ljk} := \vec{\Psi} \left( \sum_{i=0}^N \ell_i(-1) \entVar (\state{u}_{ijk}) \right).
\end{equation}

The integral of the left $\xi$ surface terms along the $\xi$ direction reads
\begin{align*}
(c)_{jk}^{\xi} =& 
\sum_{i=0}^N \entVar_{ijk}^T \omega_{jk} \ell_{i}(-1)\left[
  \numfluxb{f}^{1a}\! \left( \tilde{\state{u}}_{Ljk}, \tilde{\state{u}}^+_{Ljk} \right) + \numnonconsDxi{\hat{\Jan}} \left( \tilde{\state{u}}_{Ljk}, \tilde{\state{u}}^+_{Ljk} \right) 
  \right] 
  \\
	=& \omega_{jk} 
  \tilde{\entVar}_{Ljk}^T \left[ 
  \numfluxb{f}^{1a}\! \left( \tilde{\state{u}}_{Ljk}, \tilde{\state{u}}^+_{Ljk} \right) + \numnonconsDxi{\hat{\Jan}} \left( \tilde{\state{u}}_{Ljk}, \tilde{\state{u}}^+_{Ljk} \right) 
  \right]
\\
\text{(Sum zero)} \quad
	=& \omega_{jk} 
  \tilde{\entVar}_{Ljk}^T \left[ 
  \numfluxb{f}^{1a}\! \left( \tilde{\state{u}}_{Ljk}, \tilde{\state{u}}^+_{Ljk} \right) + \numnonconsDxi{\hat{\Jan}} \left( \tilde{\state{u}}_{Ljk}, \tilde{\state{u}}^+_{Ljk} \right) 
  \right]	
	\\
	&
	+ \frac{\omega_{jk}}{2} 
	\left(
	(\tilde{\entVar}^+_{Ljk})^T  \numfluxb{f}^{1a}\! \left( \tilde{\state{u}}^+_{Ljk} , \tilde{\state{u}}_{Ljk} \right)
	+ (\tilde{\entVar}^+_{Ljk})^T  \numnonconsDxi{\hat{\Jan}} \left( \tilde{\state{u}}^+_{Ljk} , \tilde{\state{u}}_{Ljk} \right)
	- \left(\vec{\tilde{\Psi}}^+_{Ljk} \cdot J\vec{a}^1_{Ljk} \right)
	\right)
	\\
	&
	- \frac{\omega_{jk}}{2} \left(
	(\tilde{\entVar}^+_{Ljk})^T  \numfluxb{f}^{1a}\! \left( \tilde{\state{u}}^+_{Ljk} , \tilde{\state{u}}_{Ljk} \right)
	+ (\tilde{\entVar}^+_{Ljk})^T \numnonconsDxi{\hat{\Jan}} \left( \tilde{\state{u}}^+_{Ljk} , \tilde{\state{u}}_{Ljk} \right)
	- \left(\vec{\tilde{\Psi}}^+_{Ljk} \cdot J\vec{a}^1_{Ljk} \right) \right)
\\
\text{(replace \eqref{eq:numEntFlux_3D},\eqref{eq:EntProd_MHD_3D})} \quad
	=& 
	\omega_{jk} \left(
	\numflux{f}^{1S}\!  \left( \tilde{\state{u}}_{ijL}, \tilde{\state{u}}^{+}_{ijL} \right)
	+ \frac{1}{2}
	\hat{r}^1\! \left( \tilde{\state{u}}_{ijL}, \tilde{\state{u}}^{+}_{ijL} \right)
	+ \left(\vec{\Psi}_{Ljk} \cdot J\vec{a}^1_{Ljk} \right) \right).
\numberthis \label{eq:EntropyDGSEM_Surf3D}
\end{align*}

Terms $(d)$ and $(e)$ are again analyzed in the same form as terms $(b)$ and $(c)$.
Gathering all contributions in the $\xi$ coordinate direction we obtain
\begin{align*}
 -(a)^{\xi}_{jk}+&(b)^{\xi}_{jk}+(c)^{\xi}_{jk}-(d)^{\xi}_{jk}-(e)^{\xi}_{jk} 
\nonumber \\
=&
\omega_{jk} \left[
	\numflux{f}^{1S}\! \left( \tilde{\state{u}}_{Ljk}, \tilde{\state{u}}^{+}_{Ljk} \right)
	-\numflux{f}^{1S}\! \left( \tilde{\state{u}}_{Rjk}, \tilde{\state{u}}^{+}_{Rjk} \right)
	+ \frac{1}{2} \left( 
	\hat{r}^1\! \left( \tilde{\state{u}}_{Ljk}, \tilde{\state{u}}^{+}_{Ljk} \right)
	+ \hat{r}^1\! \left( \tilde{\state{u}}_{Rjk}, \tilde{\state{u}}^{+}_{Rjk} \right) \right) \right.
	\\
	& ~~~~~~~~
	\left.
	+ \frac{1}{2} \sum_{i,m=0}^N \hat{S}_{im} r^1_{(i,m)jk} - \sum_{i,m=0}^N Q_{im} J\vec{a}^1_{mjk} \cdot \vec{\Psi}_{ijk}
	+ \sum_{i=0}^N \left( \ell_{i}(+1) \tilde{r}^1_{(i,R)jk} - \ell_{i}(-1) \tilde{r}^1_{(i,L)jk} \right)
	\right].
\numberthis \label{eq:EntropyDGSEM_Xi_3D}
\end{align*}

Summing \eqref{eq:EntropyDGSEM_Xi_3D} over $j$ and $k$, and adding the contributions of the terms in directions $\eta$ and $\zeta$ we obtain the desired result,
\begin{align*}
\!\!\sum_{i,j,k=0}^N\!\!\!\omega_{ijk} J_{ijk} \dot{S}_{ijk} 
=&
	+\!\!\sum_{j,k=0}^N \!\!\omega_{ik}  \left[
	 \numflux{f}^{1S}\!\left(\tilde{\state{u}}_{Ljk},\tilde{\state{u}}^{+}_{Ljk}\right)
	-\numflux{f}^{1S}\!\left(\tilde{\state{u}}_{Rjk},\tilde{\state{u}}^{+}_{Rjk}\right)
	+ \frac{1}{2} \left( 
	  \hat{r}^1\!\left(\tilde{\state{u}}_{Ljk},\tilde{\state{u}}^{+}_{Ljk}\right)
	+ \hat{r}^1\!\left(\tilde{\state{u}}_{Rjk},\tilde{\state{u}}^{+}_{Rjk}\right)\right) 
	\right]
\nonumber\\&
	+ \!\!\sum_{i,k=0}^N \!\!\omega_{ik}  \left[
	 \numflux{f}^{2S}\!\left(\tilde{\state{u}}_{iLk}, \tilde{\state{u}}^{+}_{iLk} \right)
	-\numflux{f}^{2S}\!\left(\tilde{\state{u}}_{iRk}, \tilde{\state{u}}^{+}_{iRk} \right)
	+ \frac{1}{2} \left( 
	  \hat{r}^2\!\left(\tilde{\state{u}}_{iLk}, \tilde{\state{u}}^{+}_{iLk} \right)
	+ \hat{r}^2\!\left( \tilde{\state{u}}_{iRk}, \tilde{\state{u}}^{+}_{iRk} \right) \right) 
	\right]
\nonumber\\&
	+ \!\!\sum_{i,j=0}^N\!\! \omega_{ij}  \left[
	 \numflux{f}^{3S}\!\left(\tilde{\state{u}}_{ijL}, \tilde{\state{u}}^{+}_{ijL} \right)
	-\numflux{f}^{3S}\!\left(\tilde{\state{u}}_{ijR}, \tilde{\state{u}}^{+}_{ijR} \right)
	+ \frac{1}{2} \left( 
	  \hat{r}^3\! \left( \tilde{\state{u}}_{ijL}, \tilde{\state{u}}^{+}_{ijL} \right)
	+ \hat{r}^3\!  \left( \tilde{\state{u}}_{ijR}, \tilde{\state{u}}^{+}_{ijR} \right) \right) 
	\right]
\nonumber\\&
	+ \frac{1}{2} \sum_{i,j,k,m=0}^N \omega_{ijk}
	\left(
	\hat{S}_{im}  r^1_{(i,m)jk} + \hat{S}_{jm} r^2_{i(j,m)k} + \hat{S}_{km} r^3_{ij(k,m)}
	\right)
\nonumber\\&
    + \!\!\sum_{i,j,k=0}^N\!\!\! \omega_{jk} \left( \ell_{i}(+1) \tilde{r}^1_{(i,R)jk} - \ell_{i}(-1) \tilde{r}^1_{(i,L)jk} \right)
\nonumber\\&
    + \!\!\sum_{i,j,k=0}^N\!\!\! \omega_{ik} \left( \ell_{j}(+1) \tilde{r}^2_{i(j,R)k} - \ell_{j}(-1) \tilde{r}^2_{i(j,L)k} \right)
\nonumber\\&
    + \!\!\sum_{i,j,k=0}^N\!\!\! \omega_{ij} \left( \ell_{k}(+1) \tilde{r}^3_{ij(k,R)} - \ell_{k}(-1) \tilde{r}^3_{ij(k,L)} \right).
\nonumber \\
	&
	- \!\!\sum_{i,j,k=0}^N\!\!\! \omega_{ijk} 
	\vec{\Psi}_{ijk} \cdot
	\underbrace{
	\sum_{m=0}^N
	\left(
	D_{im} J\vec{a}^1_{mjk} + D_{jm} J\vec{a}^2_{imk} + D_{km} J\vec{a}^3_{ijm}
	\right)
	}_{=0 },
\end{align*}
where the last term is equal to zero if the discrete metric identities,
\begin{equation} \label{eq:DiscMetricIdentities}
\left(
\sum_{l=1}^3 \bigpartialderiv{}{\xi^l} (J{a}^l_d)
\right)_{ijk} = 0, ~~~~~~ d \in \{ 1,2,3 \},
\end{equation}
hold for all the nodes of the element, $i,j,k \in \{ 0, \ldots, N \}$.
For example, they can be computed from a discrete curl \cite{Kopriva2006}.

\end{proof}

\begin{remark}
It is possible to replace the volume non-conservative two-point term \eqref{eq:numNonCons3D},
\begin{empheq}{align}
\numnonconsDxi{\tilde{\Jan}}_{(i,m)jk} := 
\frac{1}{2} \left[ (\vec{B}\cdot J\vec{a}^1)_{ijk} + \vec{B}_{mjk} \cdot \avg{J\vec{a}^1}_{(i,m)jk} \right] \phiMHD_{ijk}  +
\phiGLM_{ijk} \cdot J\vec{a}^1_{ijk} \avg{\psi}_{(i,m)jk},
\end{empheq}
with the alternative term
\begin{empheq}[box=\fbox]{align} \label{eq:numNonCons3D_other}
\numnonconsSxi{\tilde{\Jan}}_{(i,m)jk} := 
 \avg{\vec{B}}_{(i,m)jk} \cdot \avg{J\vec{a}^1}_{(i,m)jk}  \phiMHD_{ijk}  +
\phiGLM_{ijk} \cdot J\vec{a}^1_{ijk} \avg{\psi}_{(i,m)jk}.
\end{empheq}
Although both terms differ, i.e.,
\begin{equation}
    \numnonconsSxi{\tilde{\Jan}}_{(i,m)jk} = \numnonconsDxi{\tilde{\Jan}}_{(i,m)jk} 
    + \frac{1}{4} \phiMHD_{ijk} \vec{B}_{ijk} \cdot \jump{J\vec{a}^1}_{(i,m)jk},
\end{equation}
their evaluation in \eqref{eq:Gauss_DGSEM_GLMMHD_3D} is algebraically equivalent for generalized SBP operators if the metric identities \eqref{eq:DiscMetricIdentities} hold.
\end{remark}

\section{Numerical Results} \label{sec:Results}

In this section, we test the numerical accuracy and entropy consistency of our entropy-stable Gauss-DGSEM discretization of the GLM-MHD equations, and compare the results with other methods in the MHD literature.
In all cases, the time integration was performed with the explicit fourth-order five-stages Runge-Kutta scheme of \citet{carpenter1994fourth}.
The time-step size is computed as in \cite{Krais2019}, 
\begin{equation} \label{eq:TimeStepExp}
\Delta t = 
\frac{\text{CFL} \, \beta^a(N) \Delta x}{\lambda^a_{\max} (2N+1)},
\end{equation}
where CFL is the CFL number, $\lambda^a_{\max}$ is the largest eigenvalue, $\Delta x$ is the element size, and $\beta^a$ is a proportionality coefficient derived for the RK method from numerical experiments such that CFL$ \le 1$ must hold to obtain a (linear) CFL-stable time step for all polynomial degrees.
As a "conservative" approach, we use $\text{CFL}=0.5$.
The hyperbolic divergence cleaning speed, $c_h$, is adjusted at every time step as the maximum value that retains CFL-stability, and we use $\mu_0=1$ as the magnetic permeability of the medium.

All simulations presented in this section were computed {in parallel} with the 3D open-source code FLUXO (\url{www.github.com/project-fluxo/fluxo})
on the High Performance Computing (HPC) system ODIN of the Regional Computing Center of the University of Cologne (RRZK).
Each node of ODIN has two sockets, each with an Intel(R) Xeon(R) CPU E5-2670 0 @ 2.60GHz of eight cores.
The number of nodes for each simulation was selected depending on its computational cost.
The 2D simulations were computed with 2D extruded meshes with one element in the $z$-direction.

\subsection{Numerical Verification of the Schemes}

\subsubsection{Convergence Test with the Manufactured Solutions Method} \label{sec:mansol}

To test the accuracy of the entropy-stable Gauss DGSEM discretization of the GLM-MHD equations on three-dimensional curvilinear meshes, we run a convergence test with the method of manufactured solutions.
As in \cite{Bohm2018,Rueda-Ramirez2020}, we assume an exact solution to the ideal GLM-MHD system of the form
\begin{equation}\label{eq:ManSol}
\state{u}^{\mathrm{exact}} = \left[h,h,h,0,2h^2+h,\frac{1}{2} h,- \frac{1}{4} h,- \frac{1}{4} h,0\right]^T \text{ with } h = h(x,y,z,t) = 0.5 \sin(2\pi(x+y+z-t))+2,
\end{equation}
and a heat capacity ratio of $\gamma = 2$.

To obtain the exact solution \eqref{eq:ManSol}, we equip the ideal GLM-MHD system with a source term:
\begin{equation}
\partial_t \mathbf{u}  + \vec{\nabla} \cdot \blocktensor{f}^a(\state{u}) + \noncon(\mathbf{u}, \Nabla \mathbf{u}) =
\begin{pmatrix} 
h_x \\ 	
h_x + 4hh_x \\ 
h_x + 4hh_x \\ 
4hh_x \\ 
h_x + 12hh_x \\ 
\frac{1}{2} h_x \\ 
-\frac{1}{4} h_x \\ 
-\frac{1}{4} h_x \\ 0 \end{pmatrix}.
\label{residual}
\end{equation} 
We carry out the computations until the final time $t=1$ on the unit cube, $\Omega = [-1,1]^3$, with $2^3$, $4^3$, $8^3$, $16^3$, and $32^3$ hexahedral elements and the polynomial degrees $N=2,3,4,5$.
All boundaries are set to periodic.
In addition, we use the entropy-conservative flux of \citet{Derigs2018} for the volume and surface numerical fluxes, and supplement the surface fluxes with the nine-waves entropy-stable dissipation operator of \citet{Derigs2018}.

A  heavily-warped curvilinear mesh is obtained by applying a transformation function (adapted from \cite{chan2019efficient}) to all nodes of the mesh,
\begin{equation}
X(\xi,\eta,\zeta) = (x,y,z): \Omega \rightarrow f(\Omega)
\end{equation}
such that
\begin{align} \label{eq:warp}
y &= \eta + 
\alpha L_x
\cos \left(  3 \pi \left( \frac{\xi}{L_x} - \tilde{L}  \right) \right) 
\cos \left( \pi \left( \frac{\eta}{L_y} - \tilde{L} \right) \right) 
\cos \left( \pi \left( \frac{\zeta}{L_z} - \tilde{L} \right) \right) , \nonumber \\
x &= \xi + 
\alpha L_z
\cos \left( \pi \left( \frac{\xi}{L_x} - \tilde{L} \right) \right) 
\sin \left( 4 \pi \left( \frac{y}{L_y} - \tilde{L} \right) \right) 
\cos \left( \pi \left( \frac{\zeta}{L_z} - \tilde{L} \right) \right) , \nonumber \\
z &= \zeta + 
\alpha L_y
\cos \left( \pi \left( \frac{x}{L_x} - \tilde{L} \right) \right) 
\cos \left( 2 \pi \left( \frac{y}{L_y}  - \tilde{L} \right) \right) 
\cos \left( \pi \left( \frac{\zeta}{L_z} - \tilde{L} \right) \right) ,
\end{align}
with a warping factor $\alpha = 0.075$, the domain lengths $L_x = L_y = L_z = 2$, and the shift parameter $\tilde{L} = 0$.
The mesh was generated with the HOPR package \cite{hindenlang2015mesh} with a geometry mapping degree $N_{\mathrm{geo}}=2$.

Figure \ref{fig:EOCs} summarizes the results of the convergence analysis for the entropy-stable Gauss- and LGL-DGSEM.
We show the $\mathbb{L}_2$ norm of the discretization error for each variable,
\begin{equation} \label{eq:L2error}
\norm{\epsilon_i}_{2} = 
\left( \frac{\int^N_{\Omega} (u_i - u_i^{\mathrm{exact}})^2 \d \vec{x}}{\int^N_{\Omega} \d \vec{x}} \right)^{\frac{1}{2}}, ~~ \forall i \in \{1, \ldots, 9\},
\end{equation}
as a function of the number of degrees of freedom (DOFs).
In \eqref{eq:L2error}, the superscript $N$ denotes the numerical integration with a Gauss/LGL quadrature of $N+1$ points.
The entropy-stable Gauss collocation scheme is always more accurate than the LGL scheme.
In many cases, it is as accurate as the LGL scheme with one polynomial degree higher.
However, we note that the LGL-DGSEM allows for larger time-step sizes \cite{gassner2011comparison}.

\begin{figure}[htb]
\centering

\includegraphics[trim=150 60 520 195,clip,width=0.32\linewidth]{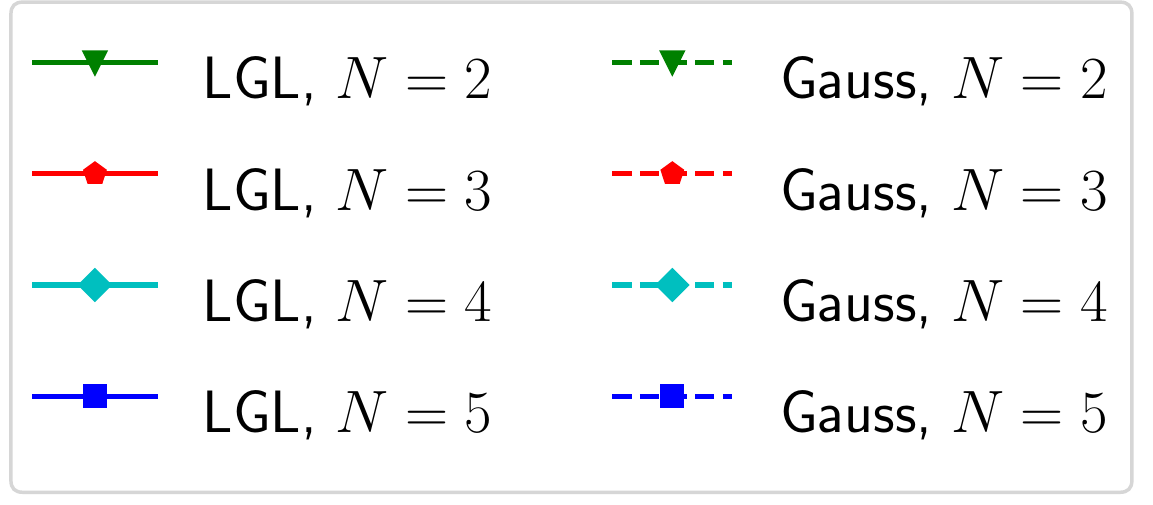}

\includegraphics[trim=0 0 0 0,clip,width=0.32\linewidth]{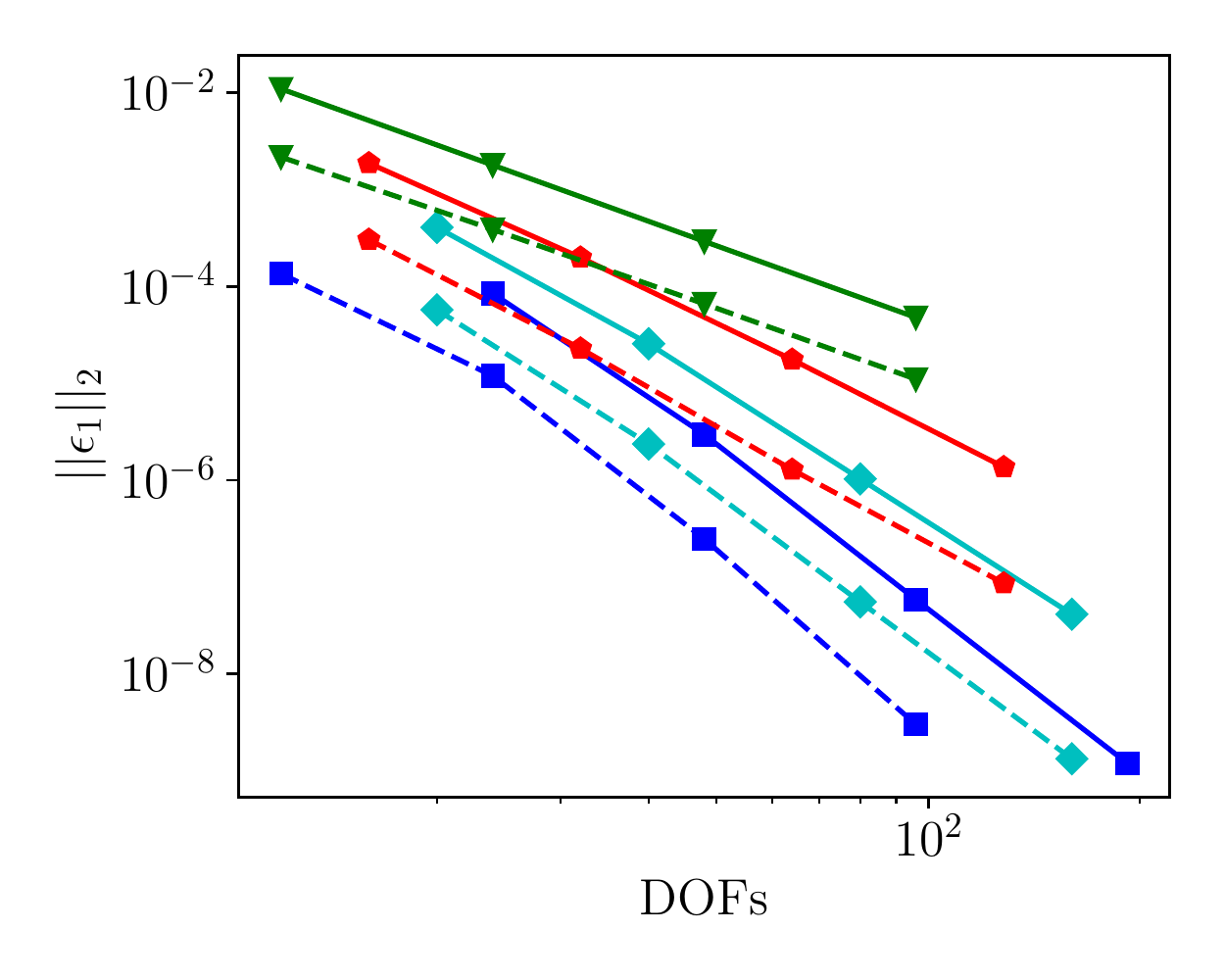}
\includegraphics[trim=0 0 0 0,clip,width=0.32\linewidth]{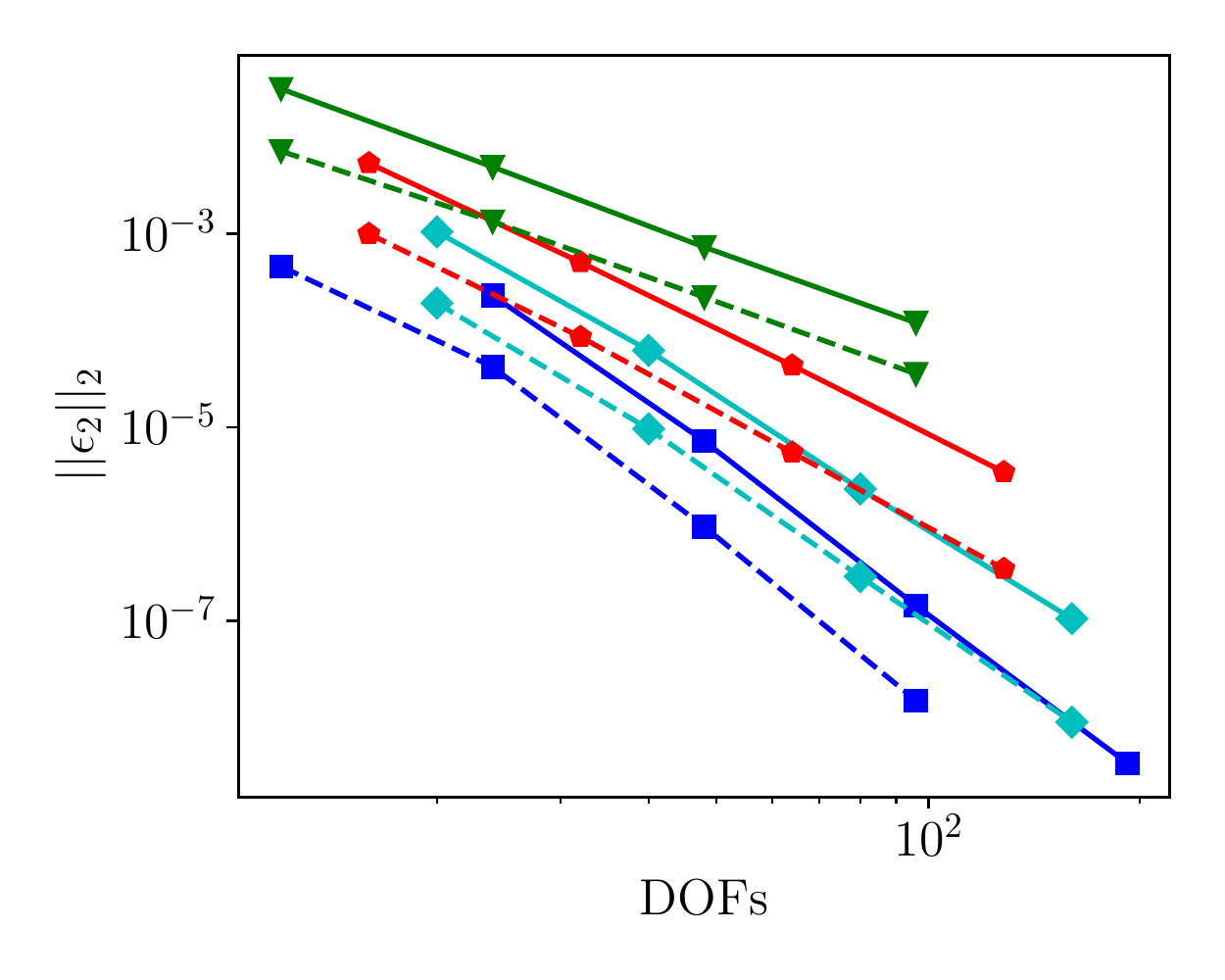}
\includegraphics[trim=0 0 0 0,clip,width=0.32\linewidth]{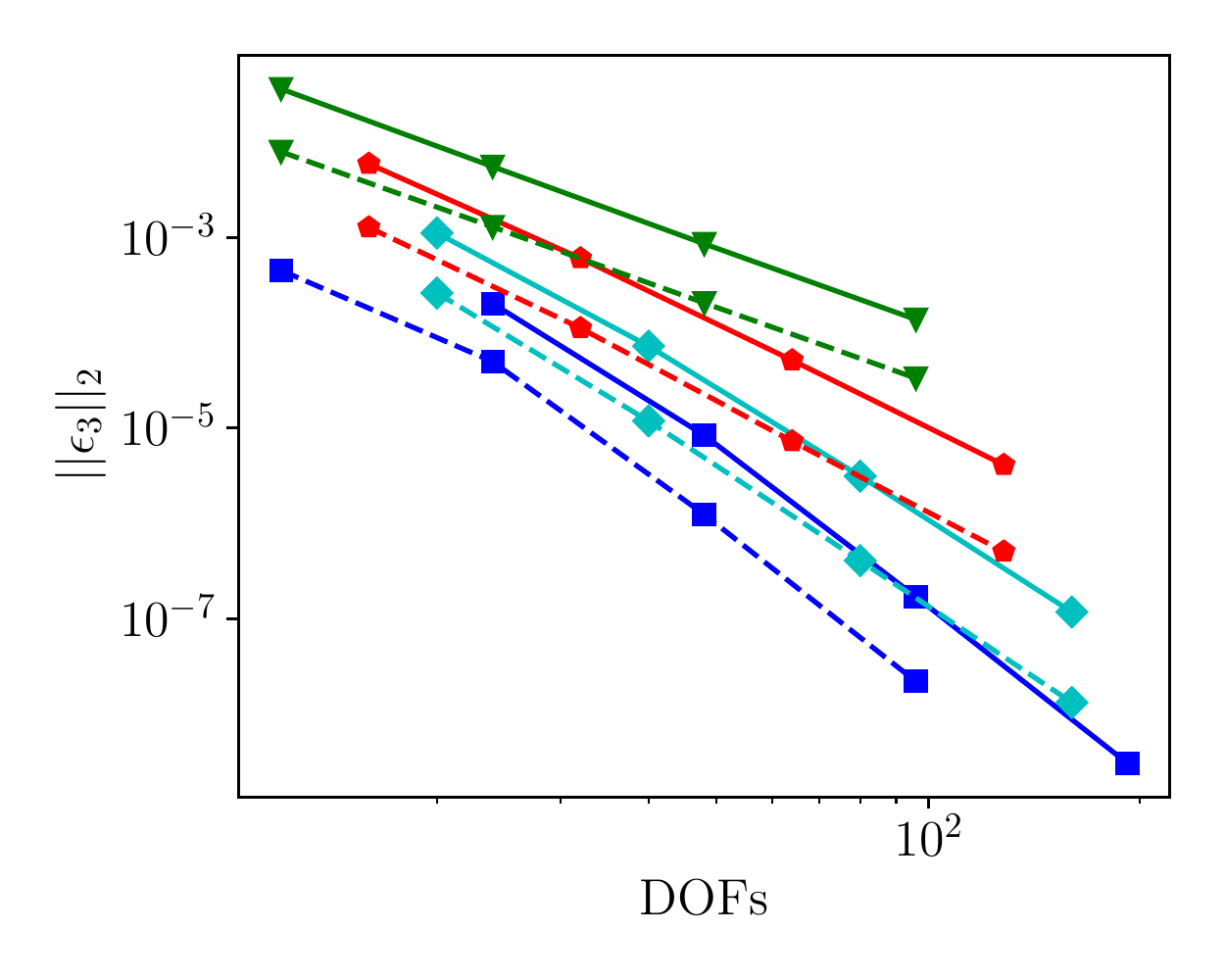}
\includegraphics[trim=0 0 0 0,clip,width=0.32\linewidth]{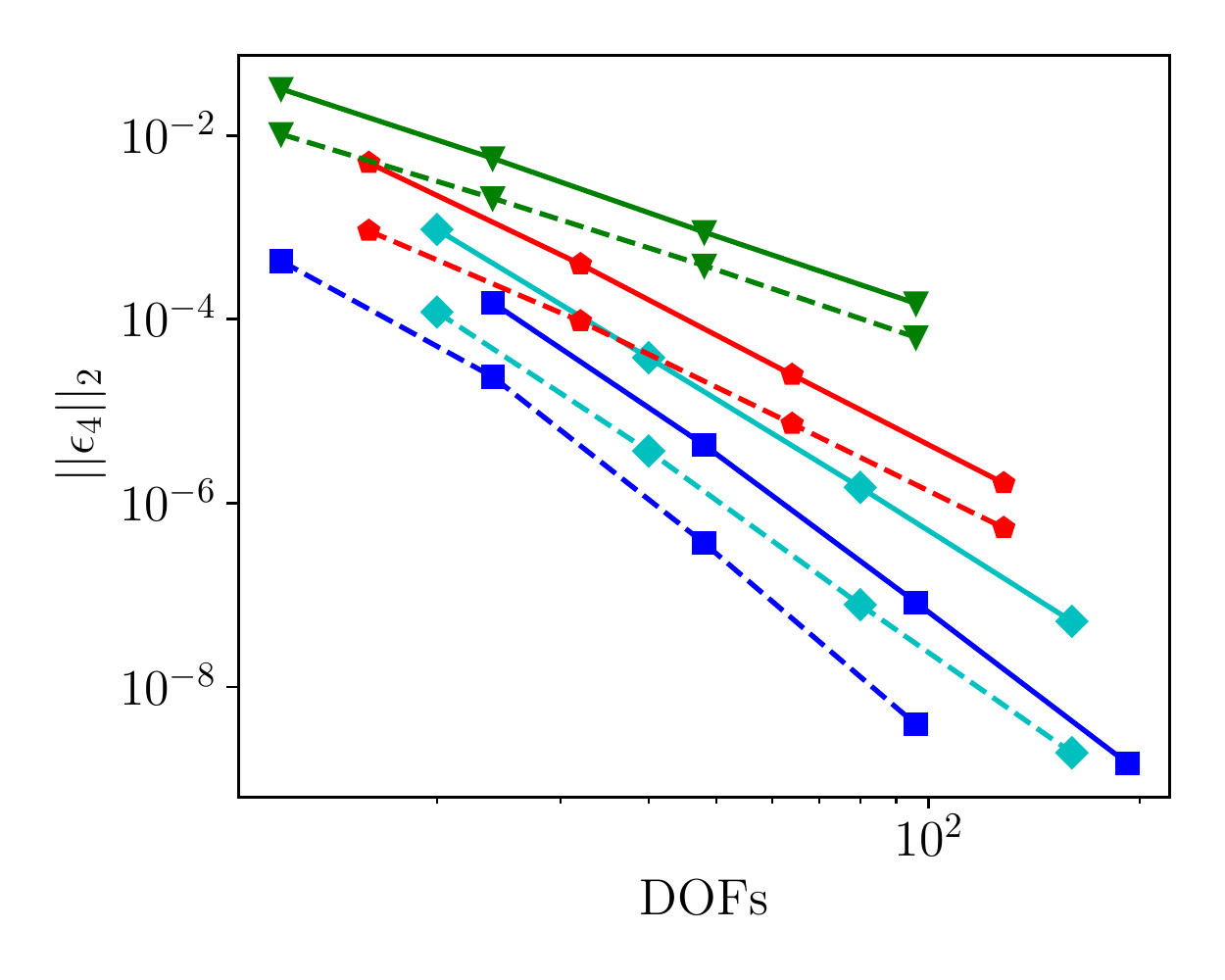}
\includegraphics[trim=0 0 0 0,clip,width=0.32\linewidth]{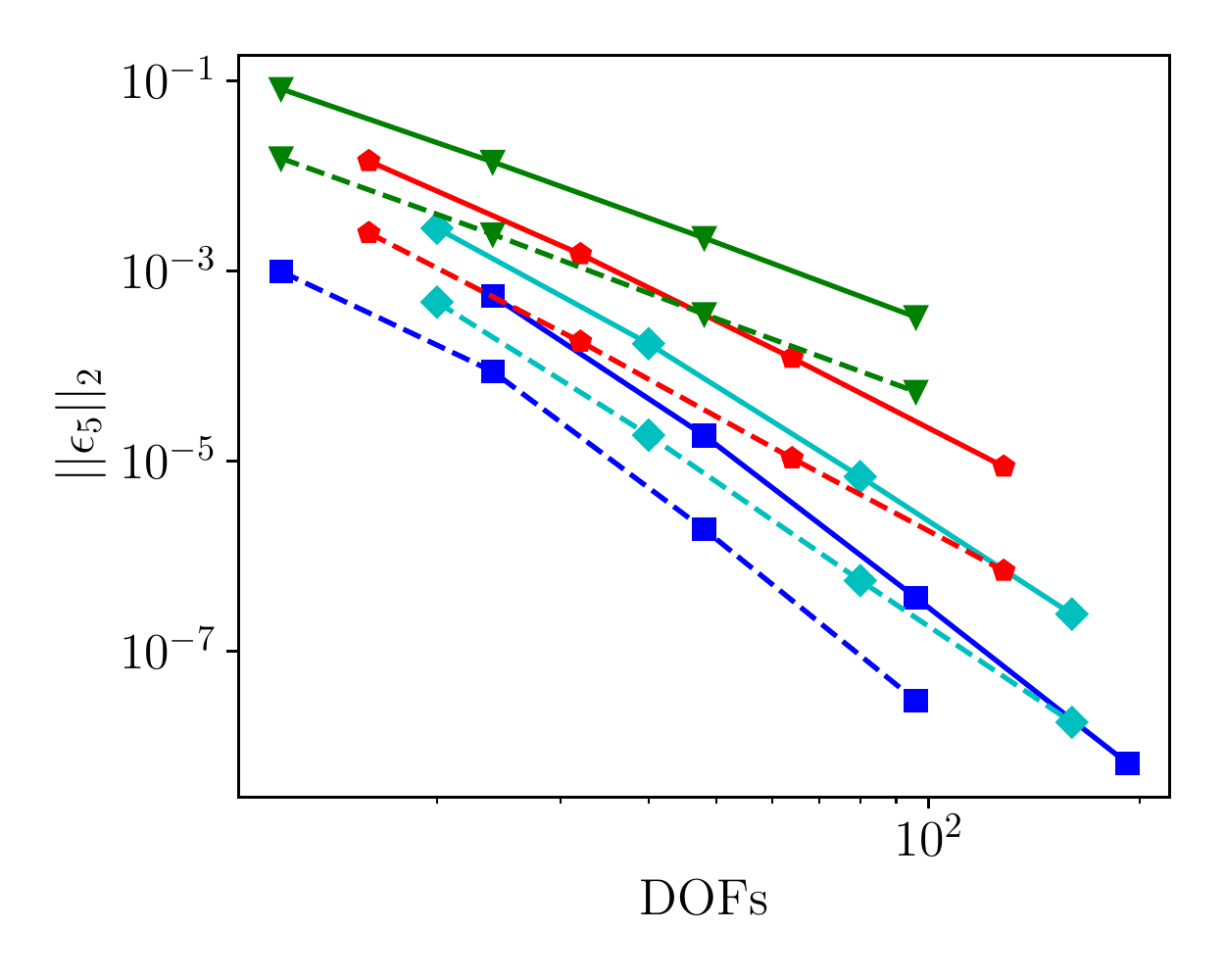}
\includegraphics[trim=0 0 0 0,clip,width=0.32\linewidth]{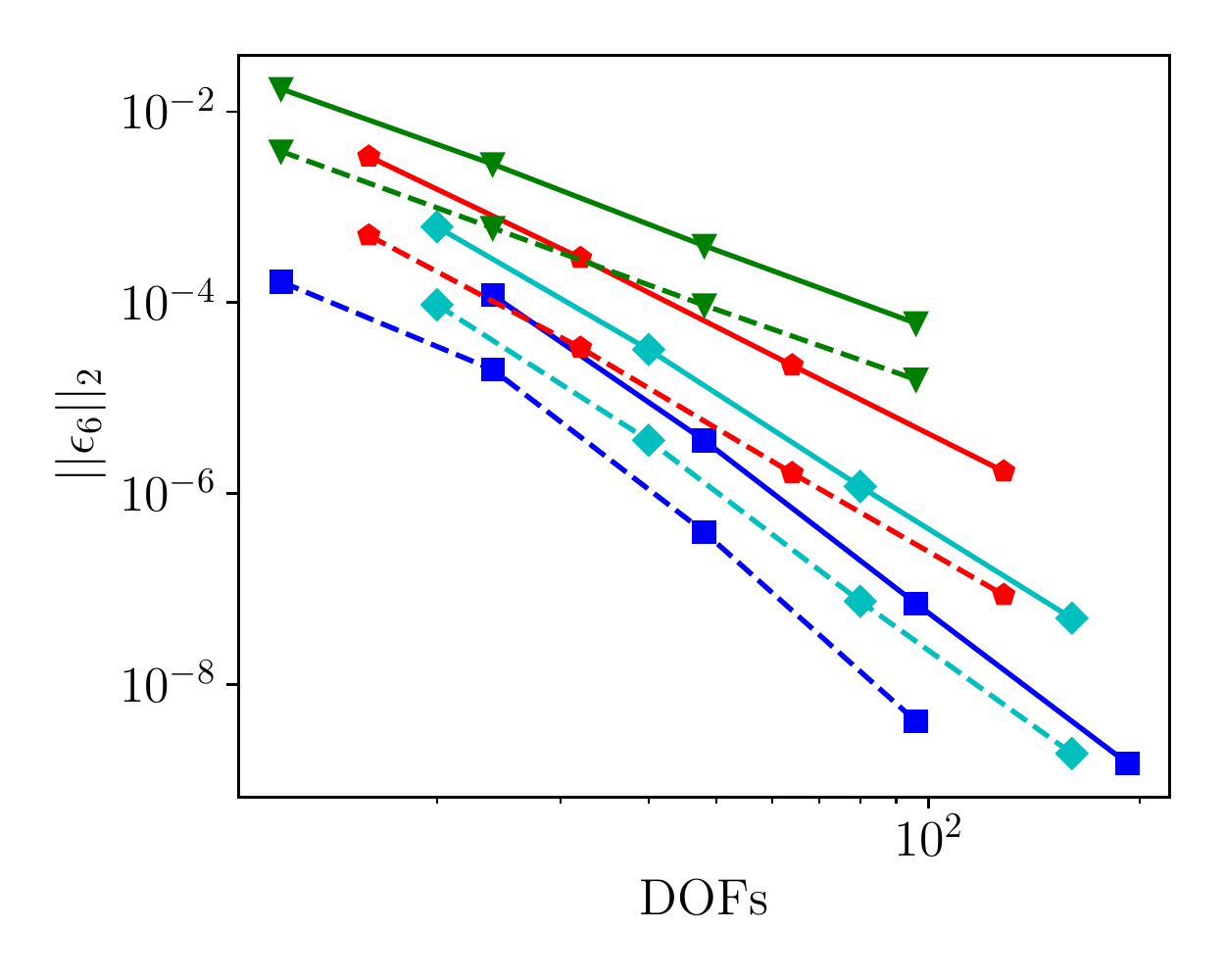}
\includegraphics[trim=0 0 0 0,clip,width=0.32\linewidth]{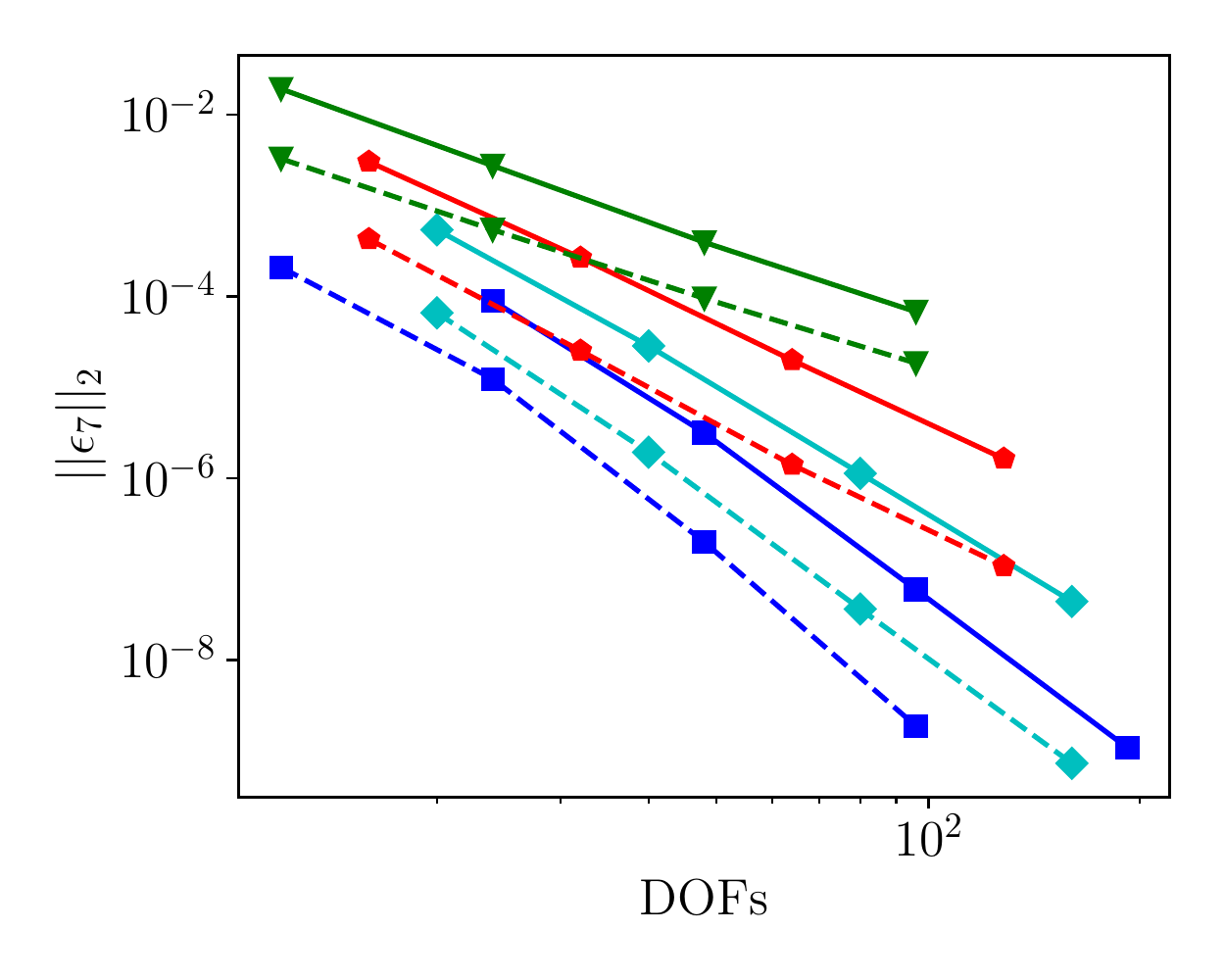}
\includegraphics[trim=0 0 0 0,clip,width=0.32\linewidth]{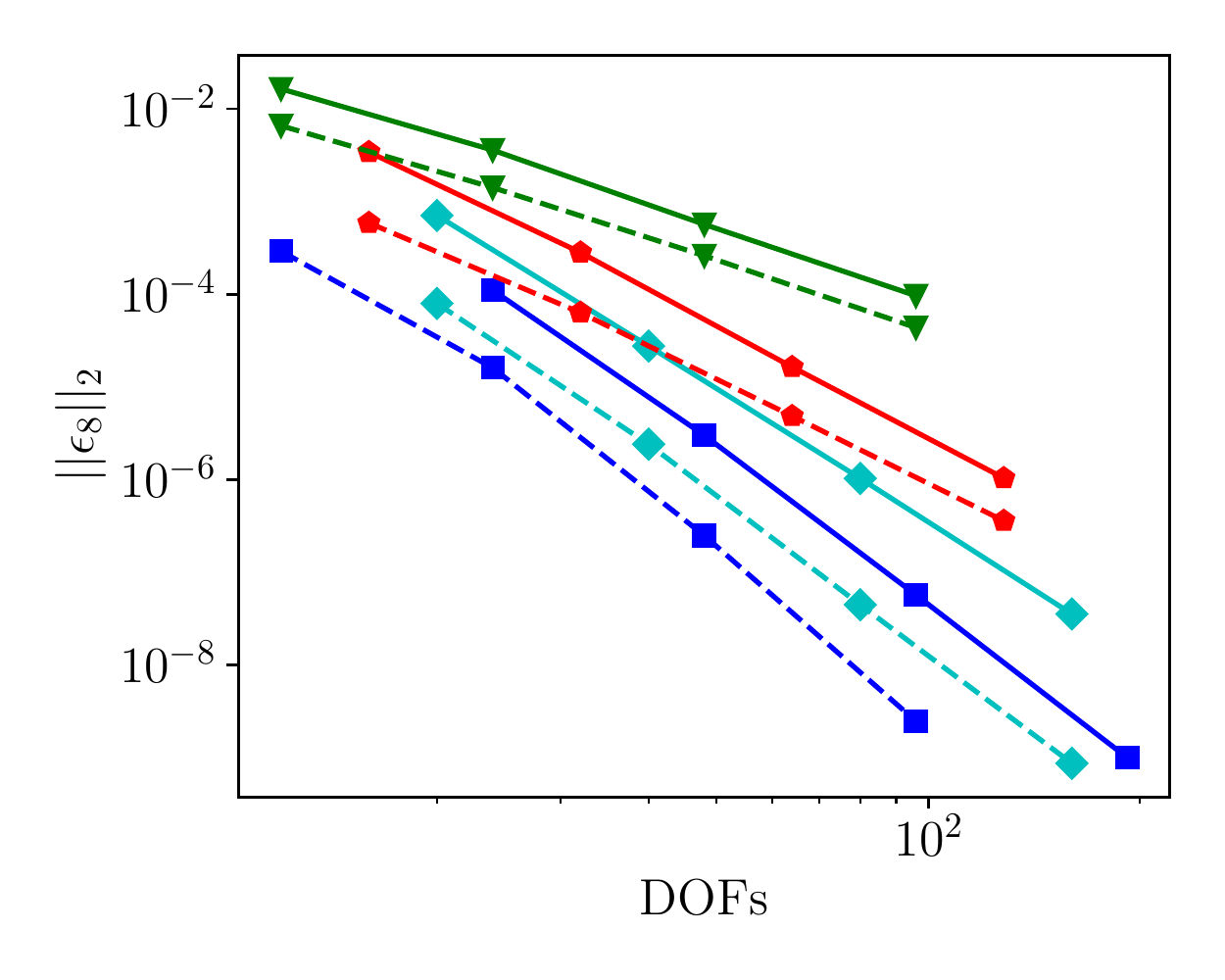}
\includegraphics[trim=0 0 0 0,clip,width=0.32\linewidth]{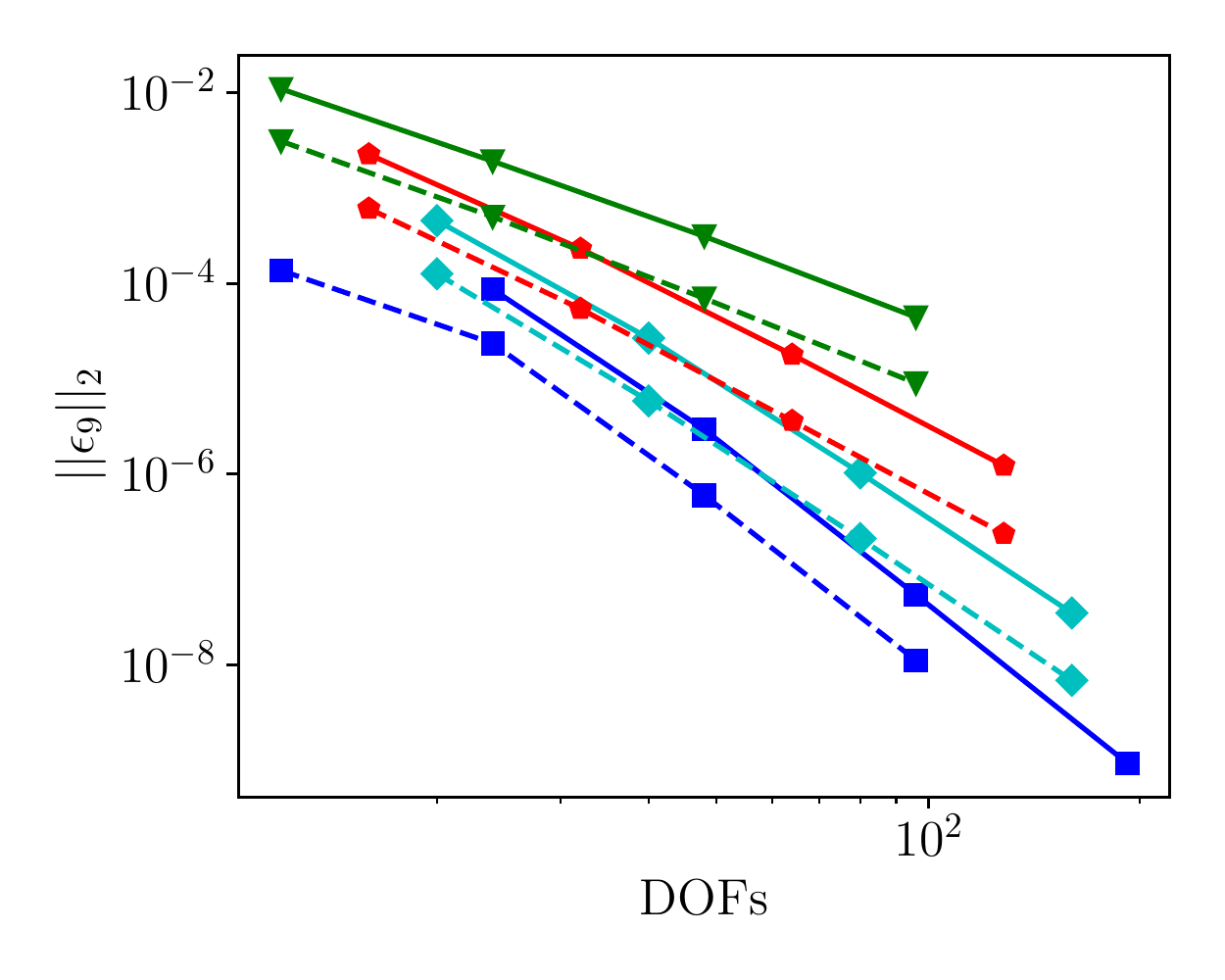}

\caption{Convergence results for each variable using the ES Gauss- and LGL-DGSEM to solve the 3D manufactured solutions test on a heavily-warped mesh.
}
\label{fig:EOCs}
\end{figure}

Tables~\ref{tab:EOC_LGL} and \ref{tab:EOC_Gauss} show the average experimental order of convergence (EOC) for each of the state quantities of the LGL and Gauss scheme, respectively.
The average EOC is computed with the error of the three simulations with the highest number of degrees of freedom from Figure \ref{fig:EOCs}.
Both the LGL- and Gauss-DSEM exhibit a formal EOC of around $N+1$, with small deviations due to the heavily warped mesh. 

\begin{table}[]
    \centering
    \begin{tabular}{c|ccccccccc}
        $N$ & $\rho$ & $\rho v_1$ & $\rho v_2$ & $\rho v_3$ & $\rho E$ & $B_1$  & $B_2$  & $B_3$ & $\psi$ \\
        \hline
         $2$ & $2.62$ & $2.68$ & $2.66$ & $2.62$ & $2.71$ & $2.77$ & $2.67$ & $2.61$ & $2.72$ \\
$3$ & $3.60$ & $3.60$ & $3.60$ & $3.96$ & $3.71$ & $3.71$ & $3.67$ & $4.04$ & $3.78$ \\
$4$ & $4.64$ & $4.61$ & $4.63$ & $4.76$ & $4.73$ & $4.67$ & $4.67$ & $4.80$ & $4.79$ \\
$5$ & $5.64$ & $5.53$ & $5.71$ & $5.75$ & $5.73$ & $5.62$ & $5.76$ & $5.78$ & $5.82$ \\
    \end{tabular}
    \caption{Average experimental order of convergence for the state quantities in the $\mathbb{L}_2$ norm for the manufactured solution test using the LGL-DGSEM scheme on a heavily warped mesh.
    The average is computed with the error of the three simulations with the highest number of degrees of freedom.}
    \label{tab:EOC_LGL}
    \centering
    \begin{tabular}{c|ccccccccc}
        $N$ & $\rho$ & $\rho v_1$ & $\rho v_2$ & $\rho v_3$ & $\rho E$ & $B_1$  & $B_2$  & $B_3$ & $\psi$ \\
        \hline
         $2$ & $2.57$ & $2.62$ & $2.64$ & $2.51$ & $2.75$ & $2.64$ & $2.44$ & $2.51$ & $2.90$ \\
$3$ & $4.03$ & $3.98$ & $3.89$ & $3.73$ & $4.01$ & $4.30$ & $3.93$ & $3.73$ & $3.92$ \\
$4$ & $5.40$ & $5.04$ & $4.91$ & $5.45$ & $5.02$ & $5.45$ & $5.68$ & $5.72$ & $4.87$ \\
$5$ & $5.98$ & $5.72$ & $5.57$ & $6.28$ & $5.76$ & $6.12$ & $6.34$ & $6.33$ & $5.52$ \\
    \end{tabular}
    \caption{Average experimental order of convergence for the state quantities in the $\mathbb{L}_2$ norm for the manufactured solution test using the Gauss-DGSEM scheme on a heavily warped mesh.
    The average is computed with the error of the three simulations with the highest number of degrees of freedom.}
    \label{tab:EOC_Gauss}
\end{table}

\pagebreak 

\subsubsection{Convergence for Different Magneto-Hydrodynamic Waves}

We now solve the three-dimensional test proposed by \citet{Stone2008}, which measures convergence of errors in the propagation of different linear amplitude magneto-hydrodynamic wave families. 
This is a simple but sensitive test, as each wave is checked separately.

The aim of this problem is to test the accuracy of our entropy-stable Gauss and LGL schemes for each wave family and to compare the results with the ones obtained with the MHD code Athena \cite{Stone2008}.
For this test, we use Athena's constrained-transport second-order finite volume method and the in-built Rusanov flux.
FLUXO was run with the two-point entropy conserving flux of \citet{hindenlang2019new} for the volume numerical fluxes, and an entropy stable version of the Rusanov flux \cite{Winters2016} for the surface numerical fluxes (to be able to compare with Athena results).

The initial condition is given by
\begin{equation}
    \state{u}(x,y,z,t=0) = \state{u}_0 + \tilde{\state{u}},
\end{equation}
where $\state{u}_0$ is a uniform medium with 
\begin{align}
    \rho_0 &= 1, & p_0 &= 3/5, & \vec{v}_0 &= \vec{0}, & \textrm{and} \,\,   \vec{B} &= \left( 1,\sqrt{2},1/2 \right)^T.
\end{align}
The quantity $\tilde{\state{u}}$ is a sinusoidal perturbation with amplitude $A=10^{-6}$ at an oblique angle (to test the 3D capabilities of the code) multiplied by the exact eigenfunctions of the fast, slow, Alfvén, and contact waves.

We carry out the computations in the domain $x \in [0,3]$, $y,z \in [0,1.5]$ with polynomial degrees $N=2,3,4,5$ and $2N_d \times N_d \times N_d$ degrees of freedom. 
Athena computations are performed on a Cartesian mesh, while FLUXO simulations are carried out on Cartesian and moderately-warped meshes.
The moderately-warped meshes are obtained from the unit cube, $\Omega = [-1,1]^3$, using the transformation \eqref{eq:warp} with a warping factor $\alpha = 1/16$, the domain lengths $L_x =3$, $ L_y = L_z = 1.5$, and the shift parameter $\tilde{L} = 0.5$.

Figure~\ref{fig:CartesianAthena} shows convergence results obtained with FLUXO for each wave family using different polynomial degrees on Cartesian meshes, and a comparison with the astrophysics code Athena \cite{Stone2008}.
We show the $\mathbb{L}_1$ norm of the discretization error
\begin{equation}
\norm{\epsilon}_{1} = \max_{i \in [1,\ldots,9]} 
\left( \frac{\int^N_{\Omega} |u_i - u_i^{\mathrm{exact}}| \d \vec{x}}{\int^N_{\Omega} \d \vec{x}} \right), ~~ \forall i \in \{1, \ldots, 9\},
\end{equation}
because that is the quantity reported for the Athena code \cite{Stone2008}.

It is clear that our DG methods exhibit high-order convergence for all wave families and that the entropy-stable Gauss scheme is always more accurate than the LGL scheme (down to machine precision) for the same number of degrees of freedom and polynomial degree.

Interestingly, the fifth-order ($N=4$) Gauss-DGSEM method exhibits superconvergence in this particular case, being more accurate than the sixth-order Gauss and LGL methods.

The convergence results on the moderately-warped meshes are reported in Figure~\ref{fig:WarpedAthena}.
We retain high-order convergence on the warped meshes, but the accuracy of most DG approximations deteriorates slightly due to the mesh distortion.
However, both the Gauss and LGL schemes now show superconvergence for $N=4$.

\begin{figure}[htb]
\centering
\includegraphics[trim=70 180 50 50, clip, width=0.35\linewidth] {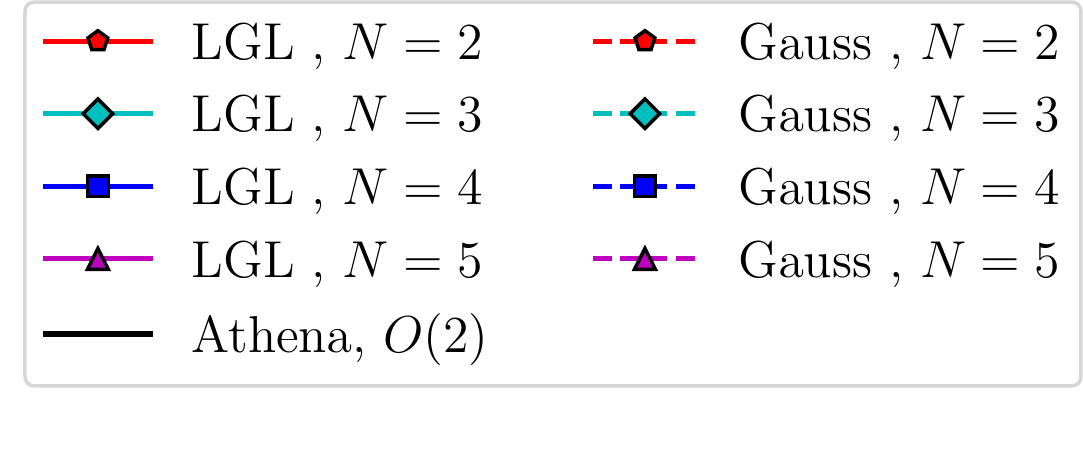}

\begin{subfigure}[b]{0.38\linewidth}
\includegraphics[trim=0 0 0 0, clip, width=\linewidth] {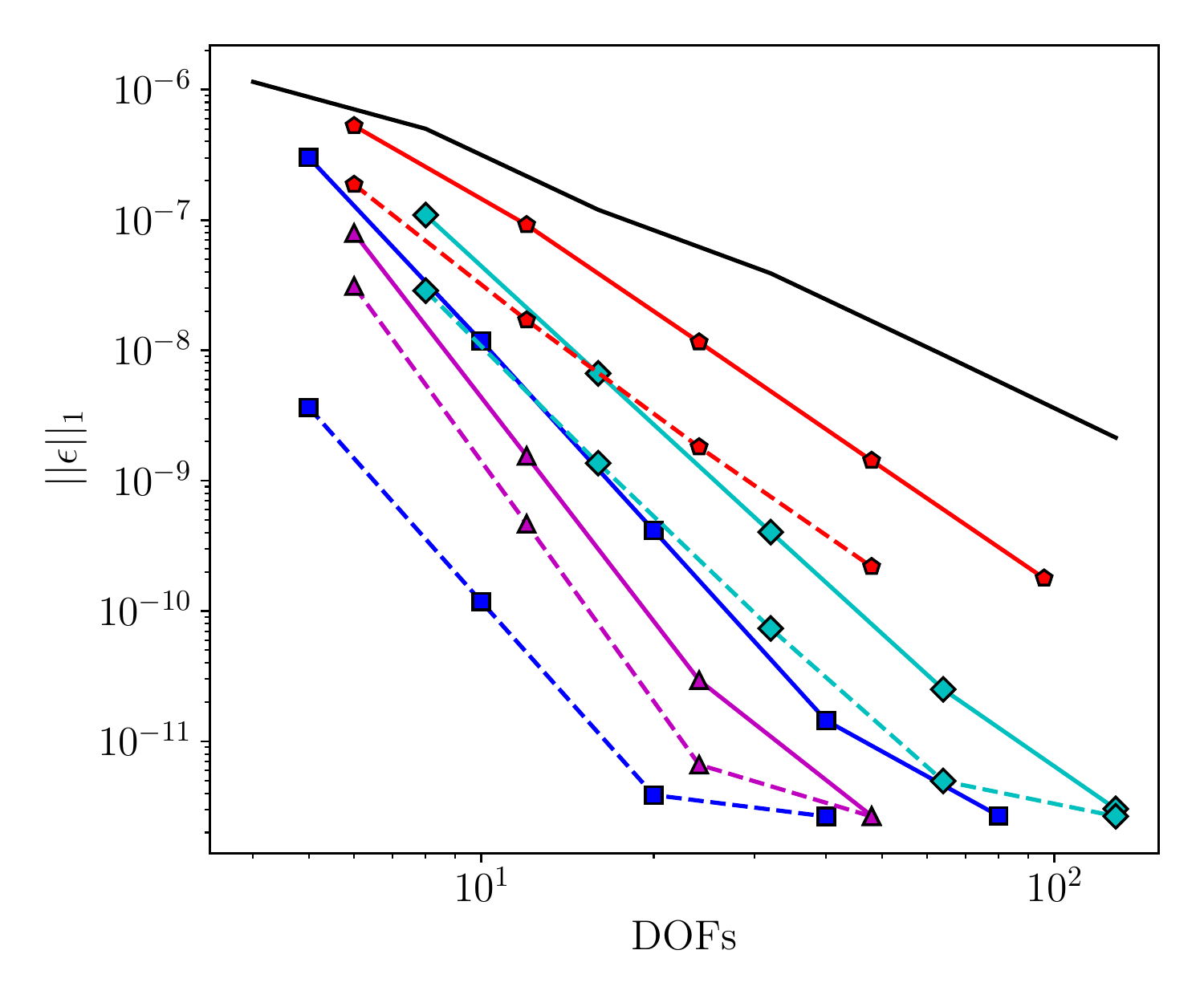}
\caption{Fast magnetosonic wave}
\label{fig:EOC2_cart_fast}
\end{subfigure}
\begin{subfigure}[b]{0.38\linewidth}
\includegraphics[trim=0 0 0 0, clip, width=\linewidth] {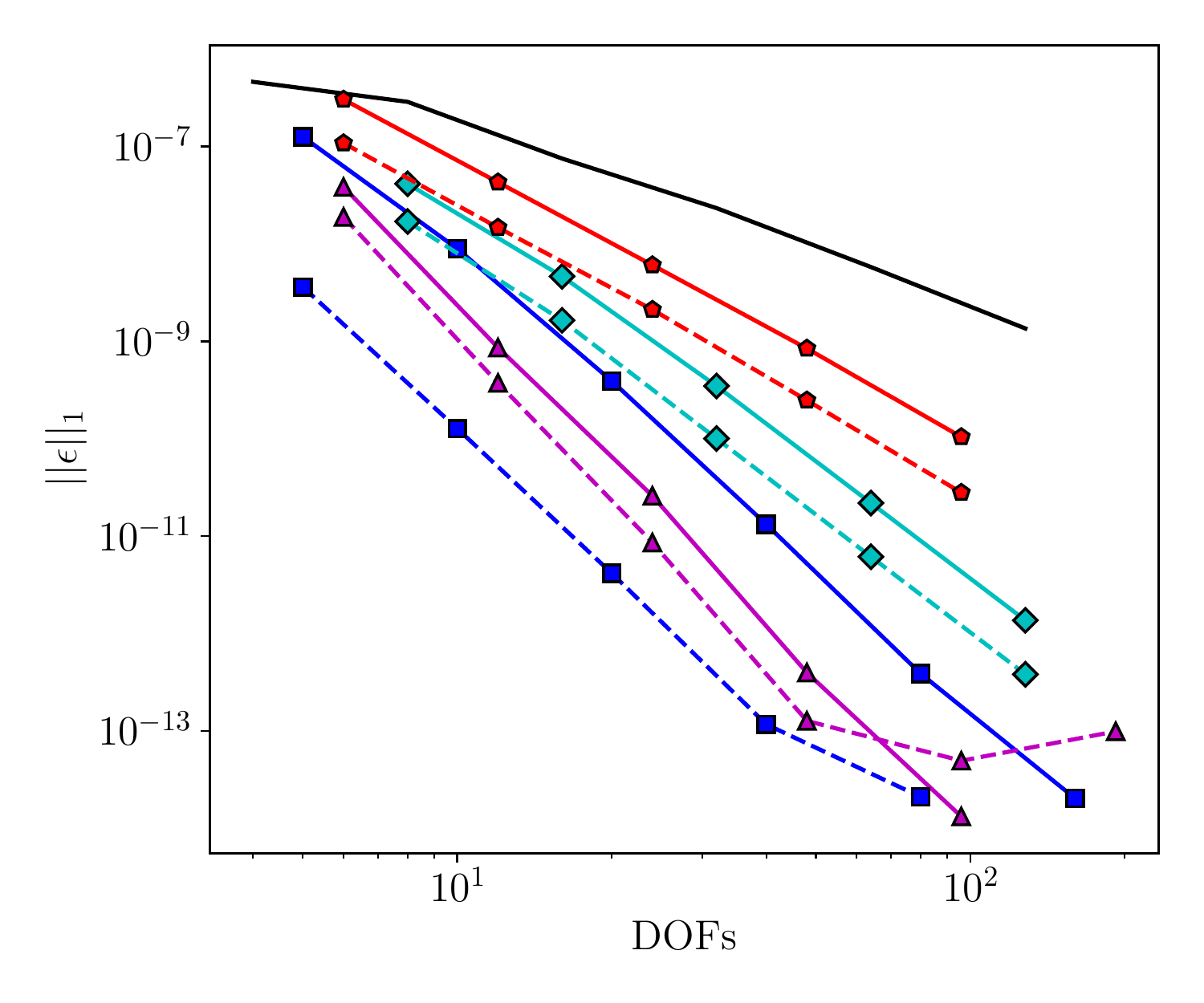}
\caption{Alfvén magnetosonic wave}
\label{fig:EOC2_cart_alfven}
\end{subfigure}
\begin{subfigure}[b]{0.38\linewidth}
\includegraphics[trim=0 0 0 0, clip, width=\linewidth] {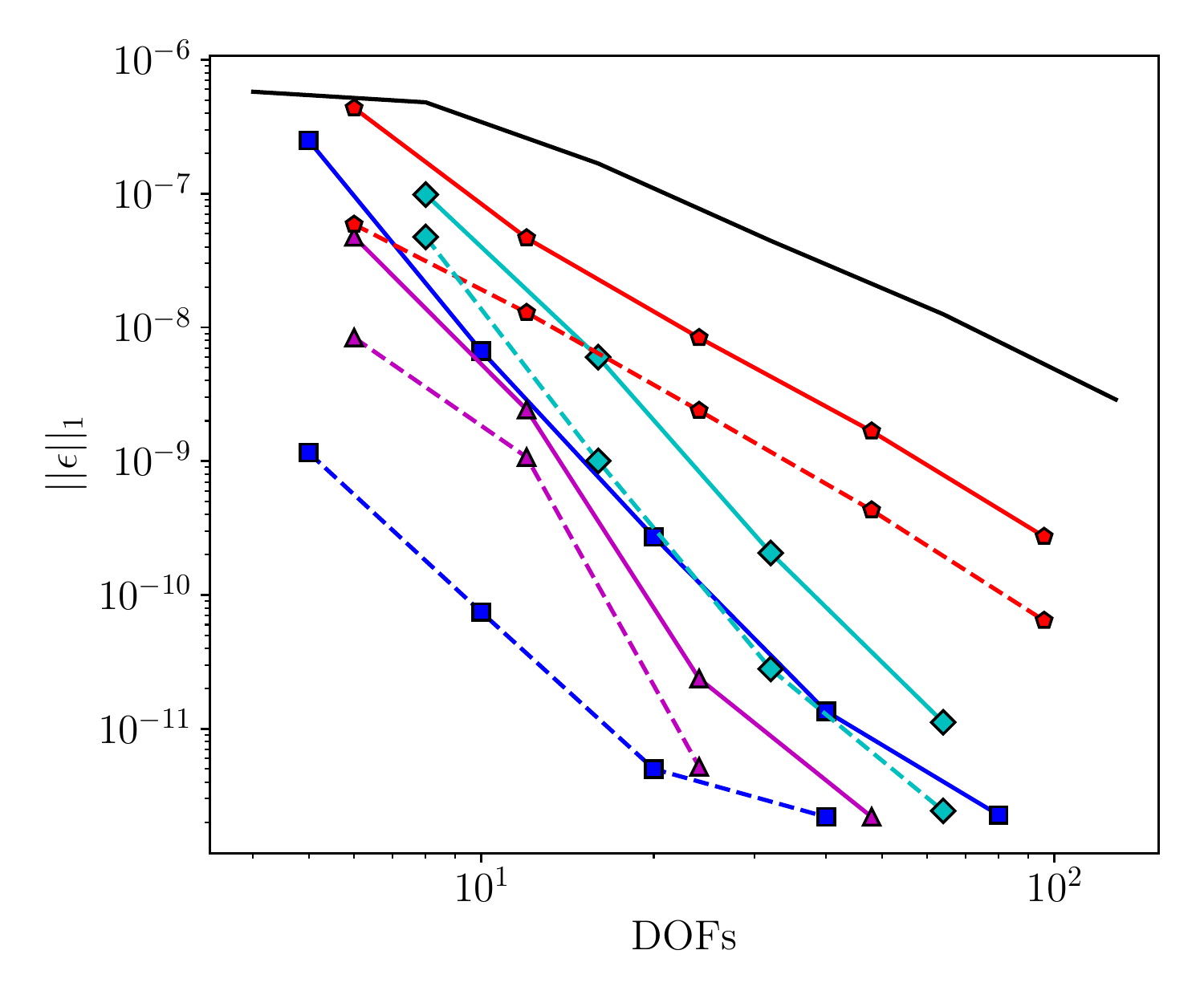}
\caption{Slow magnetosonic wave}
\label{fig:EOC2_cart_slow}
\end{subfigure}
\begin{subfigure}[b]{0.38\linewidth}
\includegraphics[trim=0 0 0 0, clip, width=\linewidth] {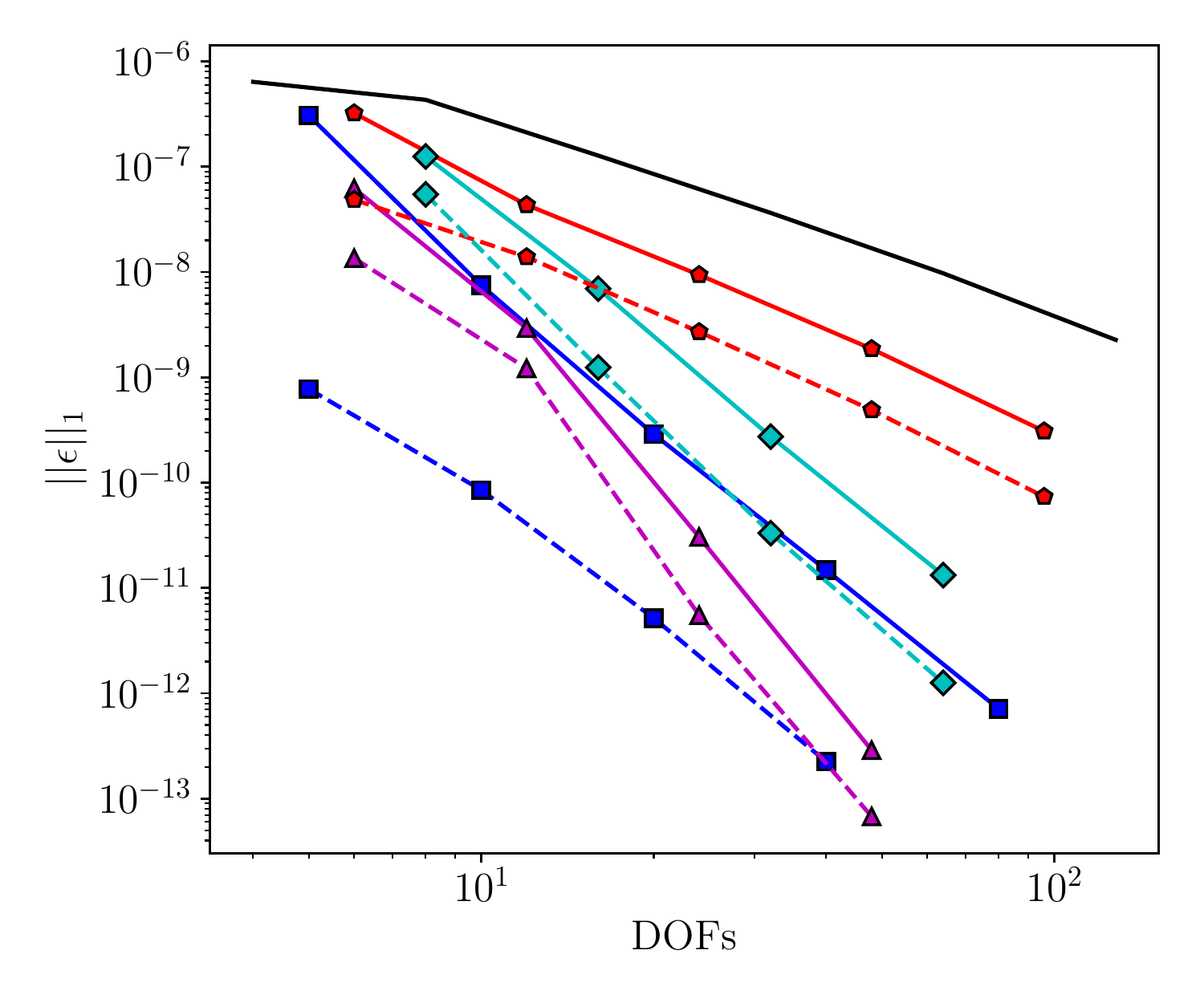}
\caption{Contact wave}
\label{fig:EOC2_cart_contact}
\end{subfigure}
\caption{Convergence results on Cartesian meshes and comparison with Athena (fast, Alfvén, slow, and contact waves). 
Solid lines are LGL and dashed lines are Gauss.}
\label{fig:CartesianAthena}
\end{figure}

\begin{figure}[htb]
\centering
\includegraphics[trim=70 180 50 50, clip, width=0.35\linewidth] {figs/EOC_Athena/Convergence_legend.pdf}

\begin{subfigure}[b]{0.38\linewidth}
\includegraphics[trim=0 0 0 0, clip, width=\linewidth] {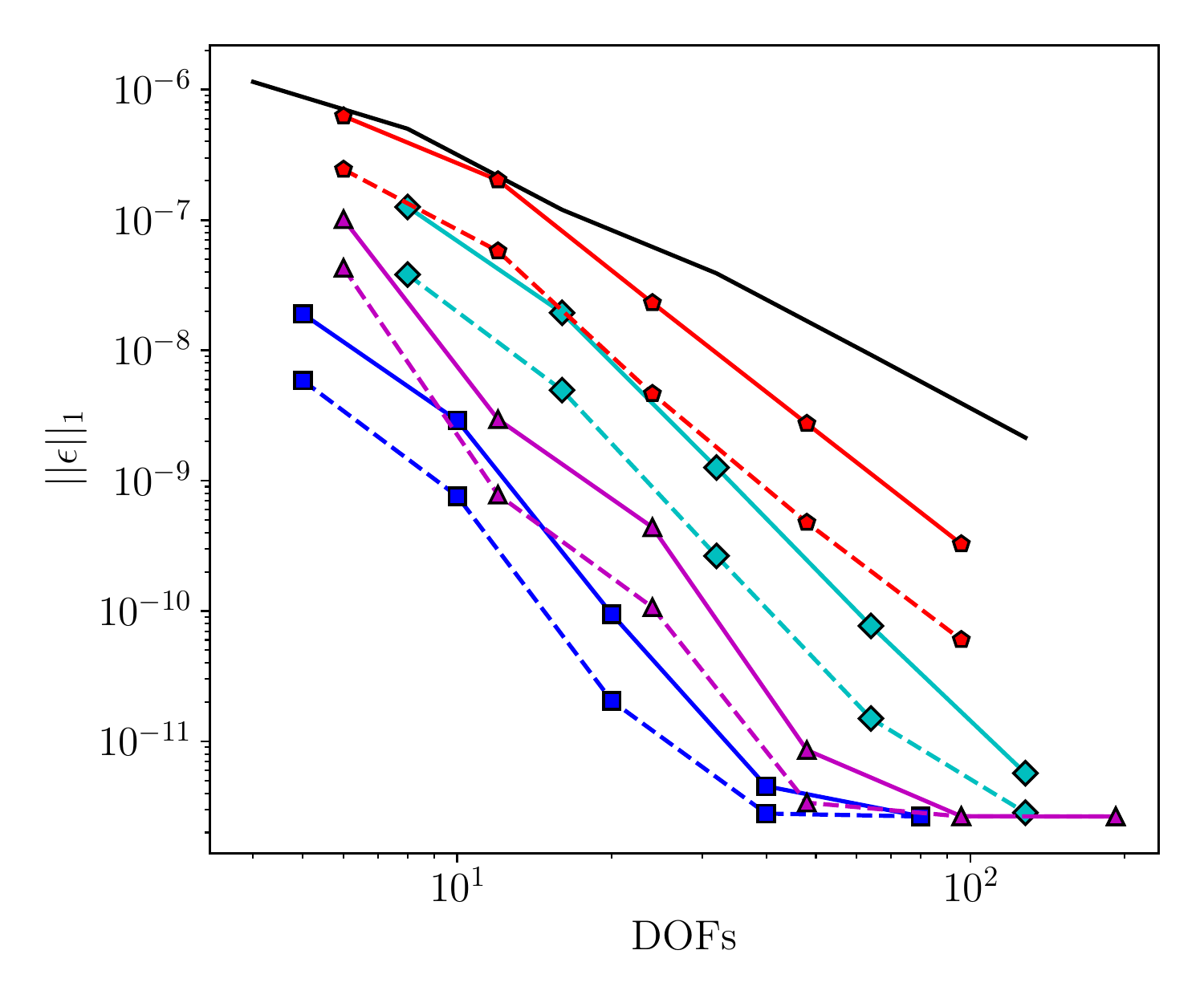}
\caption{Fast magnetosonic wave}
\label{fig:EOC2_warped_fast}
\end{subfigure}
\begin{subfigure}[b]{0.38\linewidth}
\includegraphics[trim=0 0 0 0, clip, width=\linewidth] {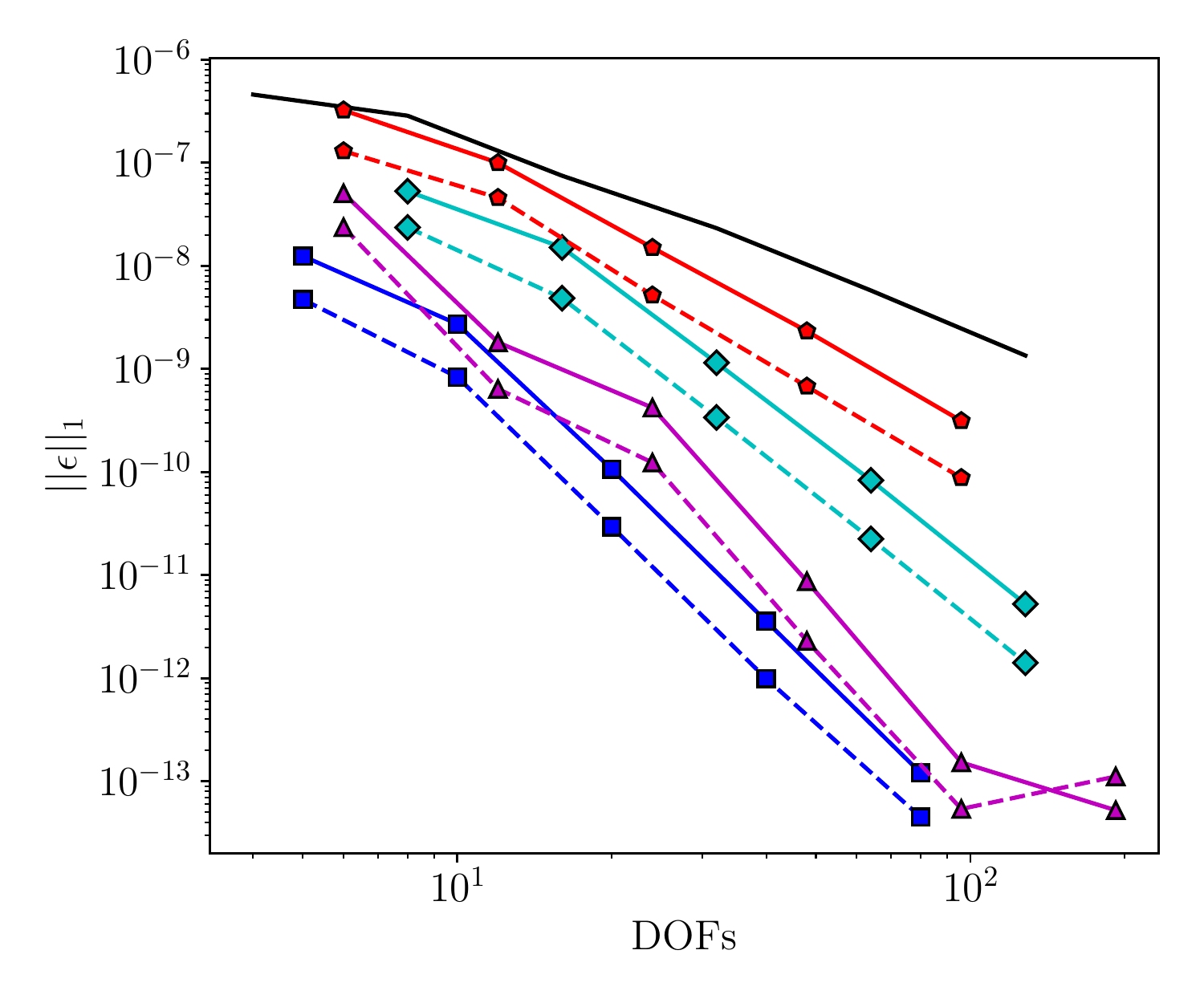}
\caption{Alfvén magnetosonic wave}
\label{fig:EOC2_warped_alfven}
\end{subfigure}
\begin{subfigure}[b]{0.38\linewidth}
\includegraphics[trim=0 0 0 0, clip, width=\linewidth] {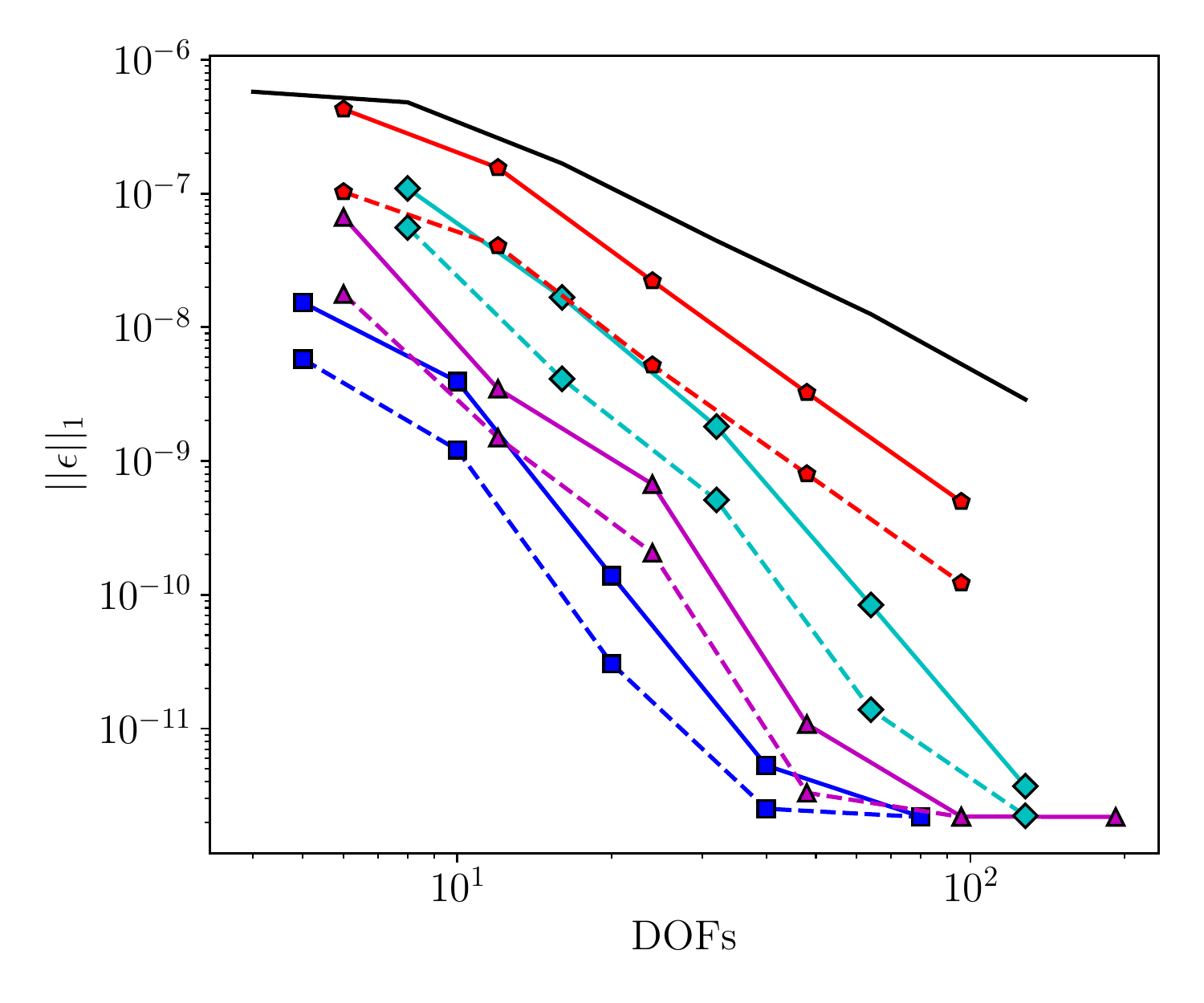}
\caption{Slow magnetosonic wave}
\label{fig:EOC2_warped_slow}
\end{subfigure}
\begin{subfigure}[b]{0.38\linewidth}
\includegraphics[trim=0 0 0 0, clip, width=\linewidth] {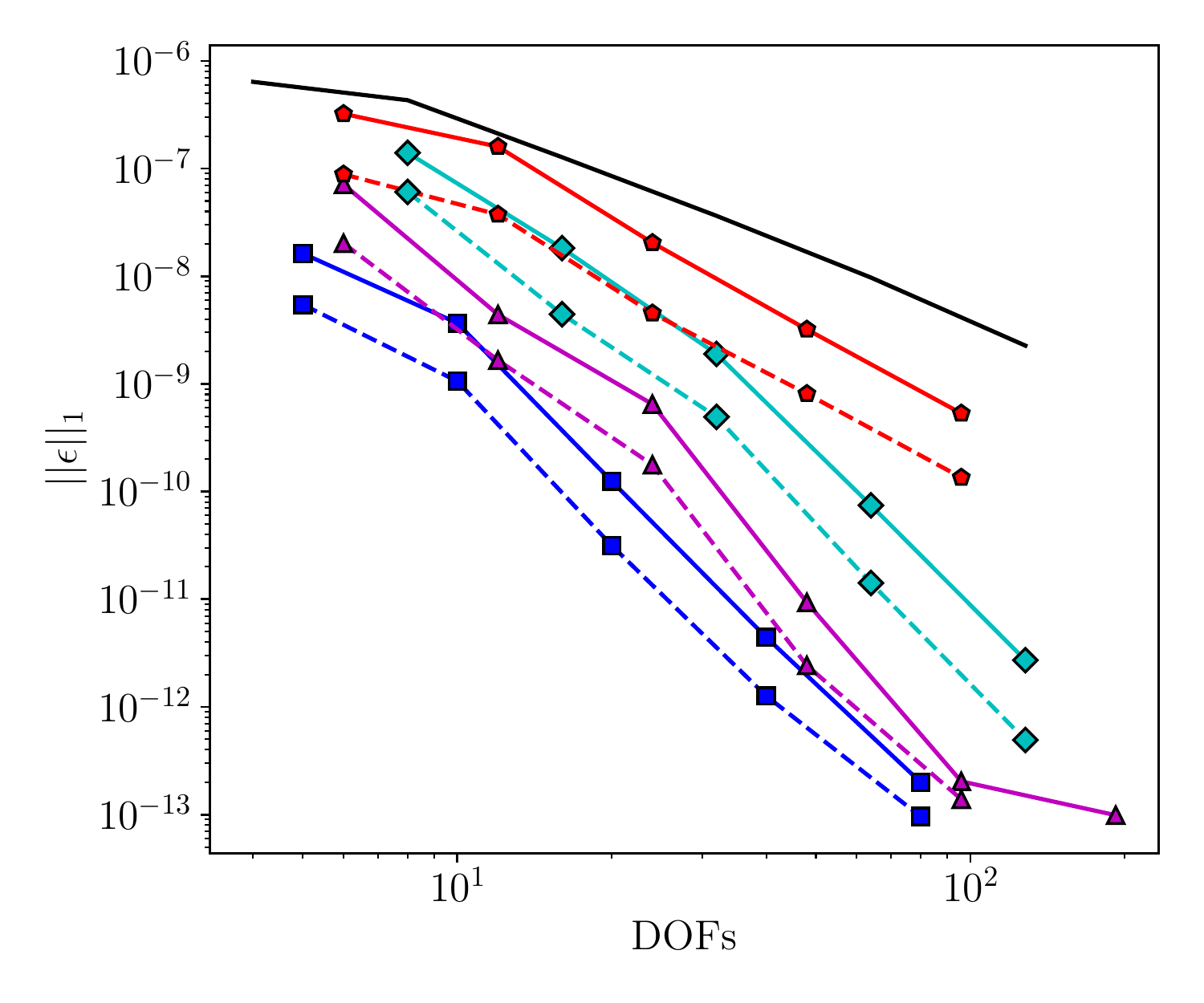}
\caption{Contact wave}
\label{fig:EOC2_warped_contact}
\end{subfigure}
\caption{Convergence results on moderately-warped meshes and comparison with Athena (fast, Alfvén, slow, and contact waves). 
Solid lines are LGL and dashed lines are Gauss.}
\label{fig:WarpedAthena}
\end{figure}

\pagebreak
\subsubsection{Entropy Conservation and Stability}

To test entropy conservation and stability we initialize a weak magnetic blast in the heavily warped domain of Section~\ref{sec:mansol}. 
We use a similar setup as in \cite{Bohm2018,Rueda-Ramirez2020}, where the initial condition is obtained as a blend of two states,
\begin{equation}
\state{u}(t=0) = \frac{\state{u}_{\mathrm{inner}}+ \lambda \state{u}_{\mathrm{outer}}}{1+\lambda}, \ \
\lambda = \exp \left[ \frac{5}{\delta_0} \left( r - r_0 \right) \right], \ \
r = \norm{\vec{x} - \vec{x}_c},
\end{equation}
where $\vec{x}_c = (0,0,0)^T$ is the center of the blast, $r_0 = 0.1$ is the distance to the center of the blast, $\delta_0 = 0.1$ is the approximate distance in which the two states are blended, and the inner and outer states are given in Table \ref{tab:statesBlast}.
We tessellate the mesh using $10^3$ elements and run the simulation until the final time $t=0.4$.

\begin{table}[htbp!]
	\centering
		\begin{tabular}{c|ccccccccc}
			     & $\rho$ 	& $v_1$	& $v_2$	& $v_3$	& $p$ 	& $B_1$	& $B_2$	& $B_3$	& $\psi$ \\\hline
			$\state{u}_{\mathrm{inner}}$ & $1.2$	& $0.1$	& $0.0$	& $0.1$	&$0.9$	& $1.0$	& $1.0$	& $1.0$	& $0.0$ \\
			$\state{u}_{\mathrm{outer}}$ & $1.0$	& $0.2$	& $-0.4$& $0.2$	&$0.3$	& $1.0$	& $1.0$	& $1.0$	& $0.0$ \\\hline 
		\end{tabular}
		\caption{Primitive states for the entropy conservation and entropy stability tests.}
\label{tab:statesBlast}
\end{table}

We report results for entropy-conservative (denoted EC) and entropy-stable (denoted ES) schemes.
The EC scheme uses the entropy-conservative flux of \citet{Derigs2018} for the volume and surface numerical fluxes.
The ES scheme uses the same entropy-conservative flux for the volume fluxes and the nine-waves entropy-stable flux \cite{Derigs2018} for the surface numerical fluxes.

Figure~\ref{fig:Blast} illustrates a slice cut visualization of the heavily warped computational domain with the density and pressure contours at the final time $t=0.4$ for the $N=6$ ES scheme.
The blast explosion gets deformed in the direction of the magnetic field.

\begin{figure}[htb]
\centering
\includegraphics[trim=0 0 0 0, clip, width=0.8\linewidth]{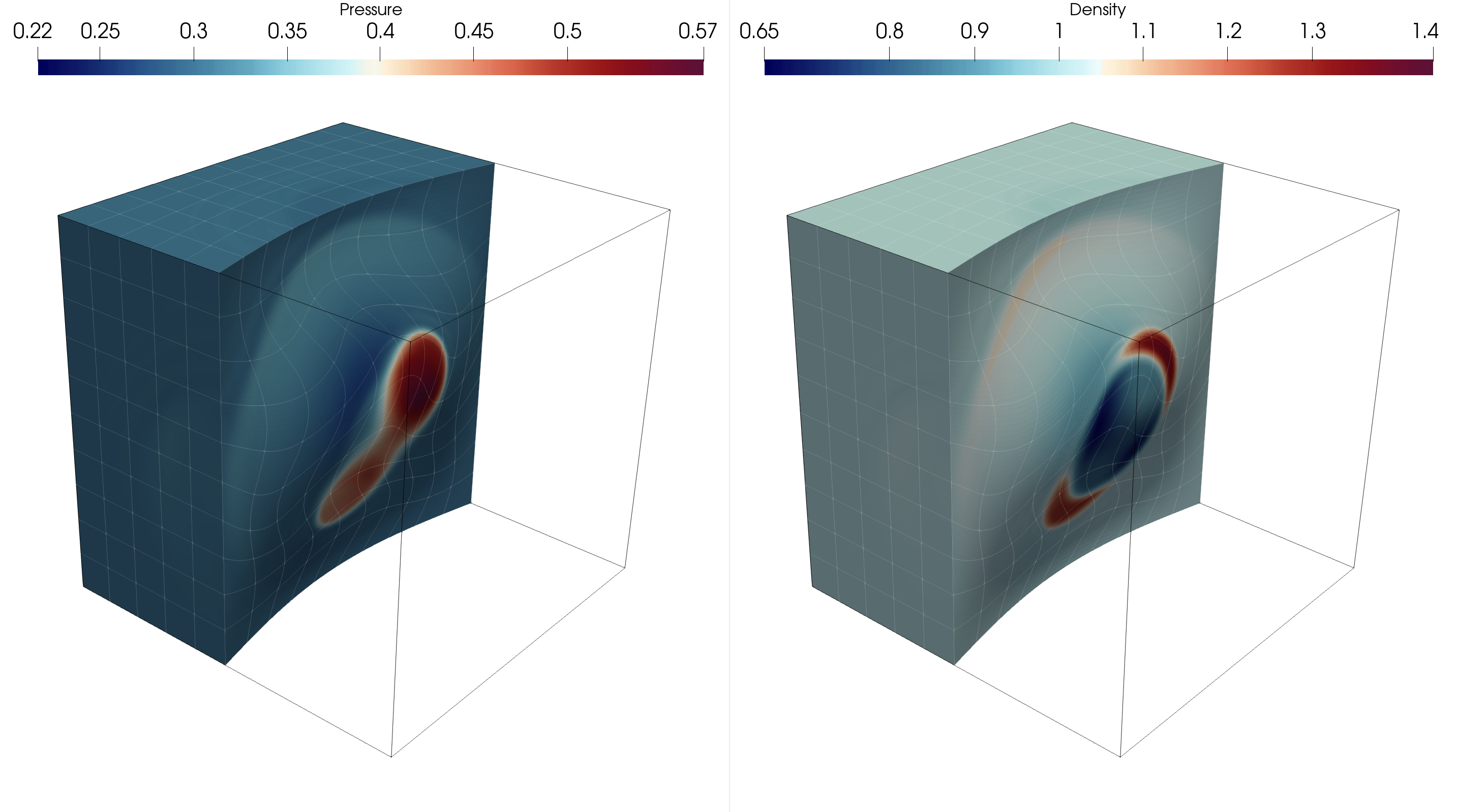}
\caption{Slice cut visualization of the domain showing the heavily warped mesh and the density and pressure distributions at $t=0.4$ for the MHD blast simulation with the $N=6$ ES scheme.}
\label{fig:Blast}
\end{figure}

Figure~\ref{fig:Blast_EC_ES} shows the total entropy change at the end of the simulation for the EC and ES schemes with $N=4$ and $N=6$.
The total entropy in the domain is computed as
\begin{equation}
S_{\Omega} = \int^N_{\Omega} S \d \vec{x}.
\end{equation}

As can be seen in Figure \ref{fig:Blast_EC_ES}, the ES schemes are always entropy-dissipative, and the entropy dissipation of the EC scheme converges to zero with fourth-order accuracy as the time-step size is reduced.
The reason is that, although the EC scheme is semi-discretely entropy conservative, the time-integration scheme adds entropy dissipation depending on the time-step size.

\begin{figure}[htb]
\centering
\includegraphics[trim=0 0 0 0, clip, width=0.55\linewidth]{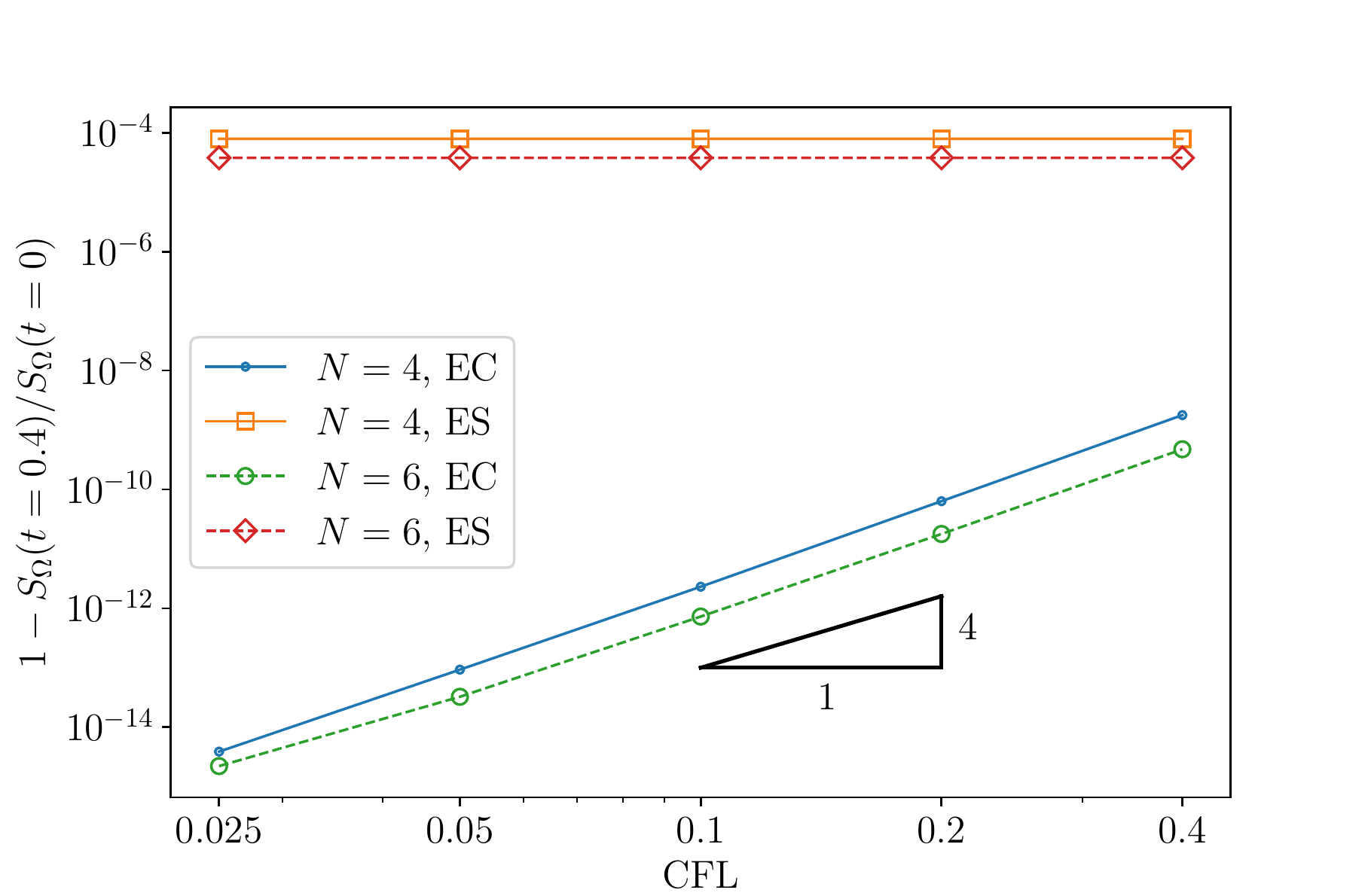}
\caption{Log-log plot of the entropy change from the initial entropy, $S_{\Omega}(t=0)$, to $S_{\Omega}(t=0.4)$ as a function of the CFL number for the EC and ES schemes.}
\label{fig:Blast_EC_ES}
\end{figure}

\subsection{Magnetized Kelvin-Helmholtz Instability}

We consider the magnetized Kelvin-Helmholtz instability problem proposed by \citet{mignone2010high} to study the effect of the grid size and the numerical dissipation of different methods.
The initial condition is given by a shear layer with velocities in the $x$ direction, constant density and pressure, a small single-mode perturbation in the $y$-velocity, and a uniform magnetic field in the $xz$ plane,
\begin{align}
    \rho(x,y,t=0) &= 1, &p(x,y,t=0) &= \frac{1}{\gamma}, &\psi(x,y,t=0) &= 0, \nonumber\\
    v_1 (x,y,t=0) &= \frac{M}{2} \tanh \left( \frac{y}{y_0} \right),
    &v_2(x,y,t=0) &= v_{2,0} \sin (2 \pi x) \exp \left(- \frac{y^2}{\sigma^2} \right), &v_3(x,y,t=0) &= 0, \nonumber \\
    B_1 (x,y,t=0) &= c_a \cos \theta,
    &B_2(x,y,t=0) &= 0, &B_3(x,y,t=0) &= c_a \sin \theta, 
\end{align}
where $\gamma = 5/3$, the Mach number is $M=1$, $y_0=1/20$ is the steepness of the shear, $c_a=0.1$ is the Alfvén speed, and $\theta = \pi/3$ is the angle of the initial magnetic field.
Moreover, the parameters of the perturbation are $v_{2,0}=0.01$ and $\sigma=0.1$.

We carry out the computations with our three-dimensional code using the polynomial degrees $N=3,7$, and the computational domain $x \in [0,1]$, $y \in [-1,1]$ and $z \in [-100,100]$ with $K_x \times 2 K_x \times 1$ Cartesian elements.
We adjust the number of elements in the $x$ direction, $K_x$, to test different resolutions: $64 \times 128$, $128 \times 256$ and $256 \times 512$ degrees of freedom.
The size and number of elements in the $z$ direction is chosen such that the state quantities cannot present significant fluctuations in $z$.
The left, right, front and back boundaries are periodic, whereas the top and bottom boundaries are perfectly conducting slip walls.

We remark that the Lorenz force acts on the $z$ component of the momentum during the simulation due to the non-zero magnetic field in $z$, giving rise to three-dimensional effects in this \textit{pseudo $2D$} example.

For this example, we use the entropy-conservative flux of \citet{Derigs2018} for the volume numerical fluxes and the nine-waves entropy-stable flux \cite{Derigs2018} for the surface numerical fluxes.

Figure~\ref{fig:KHI_contours} shows the evolution of the magnetized Kelvin-Helmholtz instability problem for the fourth- ($N=3$) and eighth-order ($N=7$) DGSEM using Gauss and LGL nodes and the highest resolution that we tested ($256 \times 512$ DOFs).
We show the ratio of the poloidal field, 
\begin{equation}
    B_p := \sqrt{B_1^2 + B_2^2},
\end{equation}
to the toroidal field, $B_t := B_3$, using the same color range as \citet{mignone2010high} for comparison purposes.
At early stages of the simulation ($t \le 5$), the perturbation follows a linear growth phase, in which the cat’s eye vortex structure forms, see e.g. the top row of Figure~\ref{fig:KHI_contours} and \cite{mignone2010high}.
As the simulation continues, magnetic field lines become distorted and energy is transferred to smaller scales in the onset of MHD turbulence.
Energy is then dissipated by the artificial viscosity and resistivity of the methods.

At early stages of the simulation ($t \le 5$), the Gauss- and LGL-DGSEM discretizations show a similar $B_p/B_t$ field distributions, regardless of the polynomial degree.
As the flow field transitions to turbulence ($t \approx 8$), slight differences start to appear between the numerical solutions of the different methods: the eight-order LGL and Gauss, and the fourth-order Gauss methods are remarkably similar, whereas the fourth-order LGL scheme shows fewer small vortical structures.
As turbulence develops in the domain, it is clear that increasing the order of the approximation and switching from LGL to Gauss nodes allows the development of smaller scales.

\begin{figure}[h!]
\newcommand{\tablerow}[1]{
 \includegraphics[trim=335 0 1766 0,clip,height=0.28\linewidth]{figs/KHI/01_N3/LGL_vs_Gauss_t_#1.png}
 &
 \includegraphics[trim=335 0 1766 0,clip,height=0.28\linewidth]{figs/KHI/02_N7/LGL_vs_Gauss_t_#1.png}
 &
 \includegraphics[trim=1766 0 335 0,clip,height=0.28\linewidth]{figs/KHI/01_N3/LGL_vs_Gauss_t_#1.png}
 &
 \includegraphics[trim=1766 0 335 0,clip,height=0.28\linewidth]{figs/KHI/02_N7/LGL_vs_Gauss_t_#1.png}
 &
 \includegraphics[trim=2566 0 0 0,clip,height=0.28\linewidth]{figs/KHI/02_N7/LGL_vs_Gauss_t_#1.png}
 \\
}
\centering

\begin{tabular}{ccccc}
   LGL (N=3) & LGL (N=7) & Gauss (N=3) & Gauss (N=7) & \\
    \tablerow{05}
    \tablerow{08}
    \tablerow{12}
    \tablerow{20}
\end{tabular}
\caption{Evolution of the magnetized Kelvin-Helmholtz instability problem at times $t=5$ (top panel), $t=8$ (second panel), $t=12$ (third panel) and $t=20$ (bottom panel) with the highest resolution ($256 \times 512$ DOFs).
We show the ratio of the poloidal field, $B_p$, to the toroidal field, $B_t := B_3$. 
}
\label{fig:KHI_contours}
\end{figure}

\citet{mignone2010high} proposed the use of the normalized volume-integrated poloidal magnetic energy and the so-called growth rate to compare the numerical dissipation of different schemes and grid resolutions during the transition to MHD turbulence.
The normalized volume-integrated poloidal magnetic energy is defined as
\begin{equation}
    \langle B_p^2 \rangle(t) := \frac{\int_{\Omega} B_p^2(t) \d \vec{x}}{\int_{\Omega} B_p^2(t=0) \d \vec{x}},
\end{equation}
and the growth rate is defined as
\begin{equation}
    \Delta v_y(t) := \frac{v_2^{\max} - v_2^{\min}}{2}.
\end{equation}

Figure~\ref{fig:KHI_plots} shows the evolution of $\langle B_p^2 \rangle$ and $\Delta v_y$ during the transition to turbulence for the magnetized KHI example using the fourth- ($N=3$) and eighth-order ($N=7$) DGSEM with Gauss and LGL nodes on different grid resolutions.

As in \cite{mignone2010high}, we observe that the poloidal magnetic energy grows faster in the transition to turbulence when the resolution is increased and when a scheme with a higher convergence order is selected, which indicates that the overall numerical dissipation is smaller.
Figures~\ref{fig:KHI_plots_N3_Bp}~and~\ref{fig:KHI_plots_N7_Bp} suggest that the Gauss-DGSEM allows for a less dissipative transition to turbulence that the LGL-DGSEM, being the difference greater at low polynomial degrees and coarse resolutions.

The evolution of the growth rate, $\Delta v_y$, follows a similar trend as the poloidal magnetic energy, which was also observed by \citet{mignone2010high}.
Here, however, the difference between the Gauss- and LGL-DGSEM gets smaller as the resolution and convergence order of the scheme are increased.

In Figure~\ref{fig:KHI_plots}, we also plot the \textit{best} result obtained by \citet{mignone2010high} with the highest resolution and convergence order that they tested ($256 \times 512$ degrees of freedom and a fifth-order finite difference scheme) as a reference.
In \cite{mignone2010high}, the same trend for $\langle B_p^2 \rangle$ and $\Delta v_y$ was observed when the resolution and convergence order was increased.

\begin{figure}[h!]
\centering
\begin{subfigure}[b]{0.38\linewidth}
\includegraphics[trim=0 0 0 0, clip,
width=\linewidth]{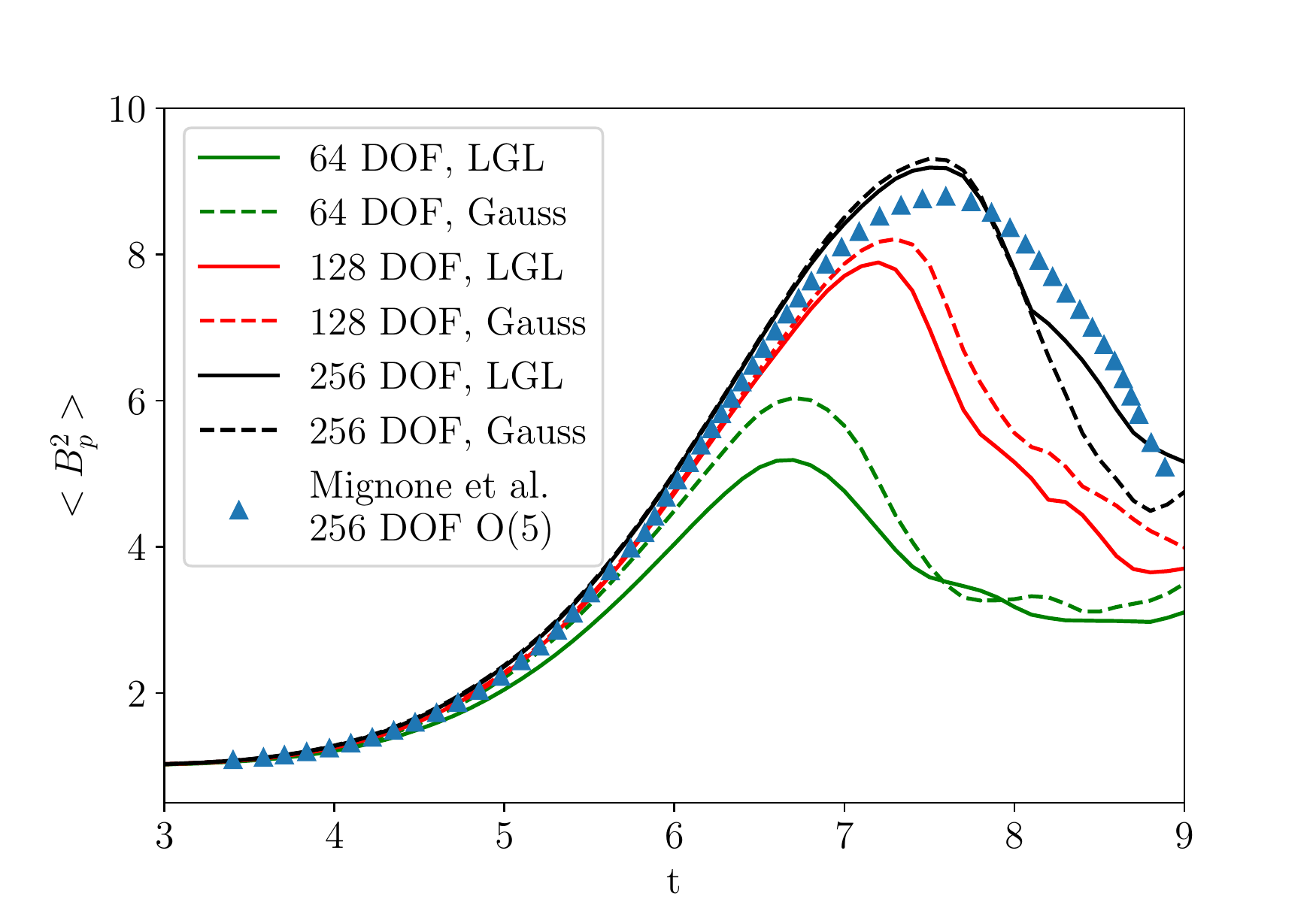}
\caption{Poloidal magnetic energy, $N=3$}
\label{fig:KHI_plots_N3_Bp}
\end{subfigure}
\begin{subfigure}[b]{0.38\linewidth}
\includegraphics[trim=0 0 0 0, clip,
width=\linewidth]{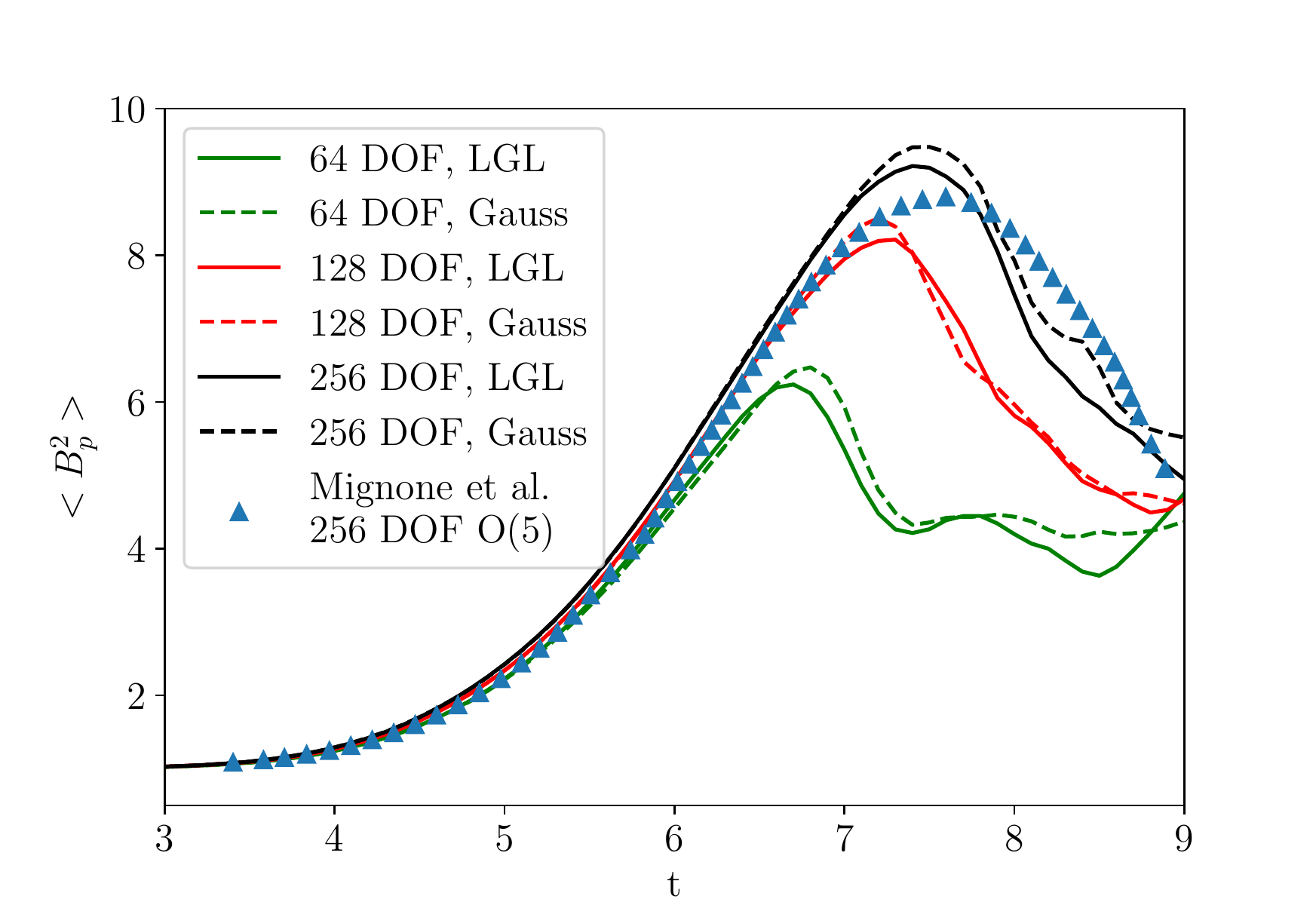}
\caption{Poloidal magnetic energy, $N=7$}
\label{fig:KHI_plots_N7_Bp}
\end{subfigure}
\begin{subfigure}[b]{0.38\linewidth}
\includegraphics[trim=0 0 0 0, clip,
width=\linewidth]{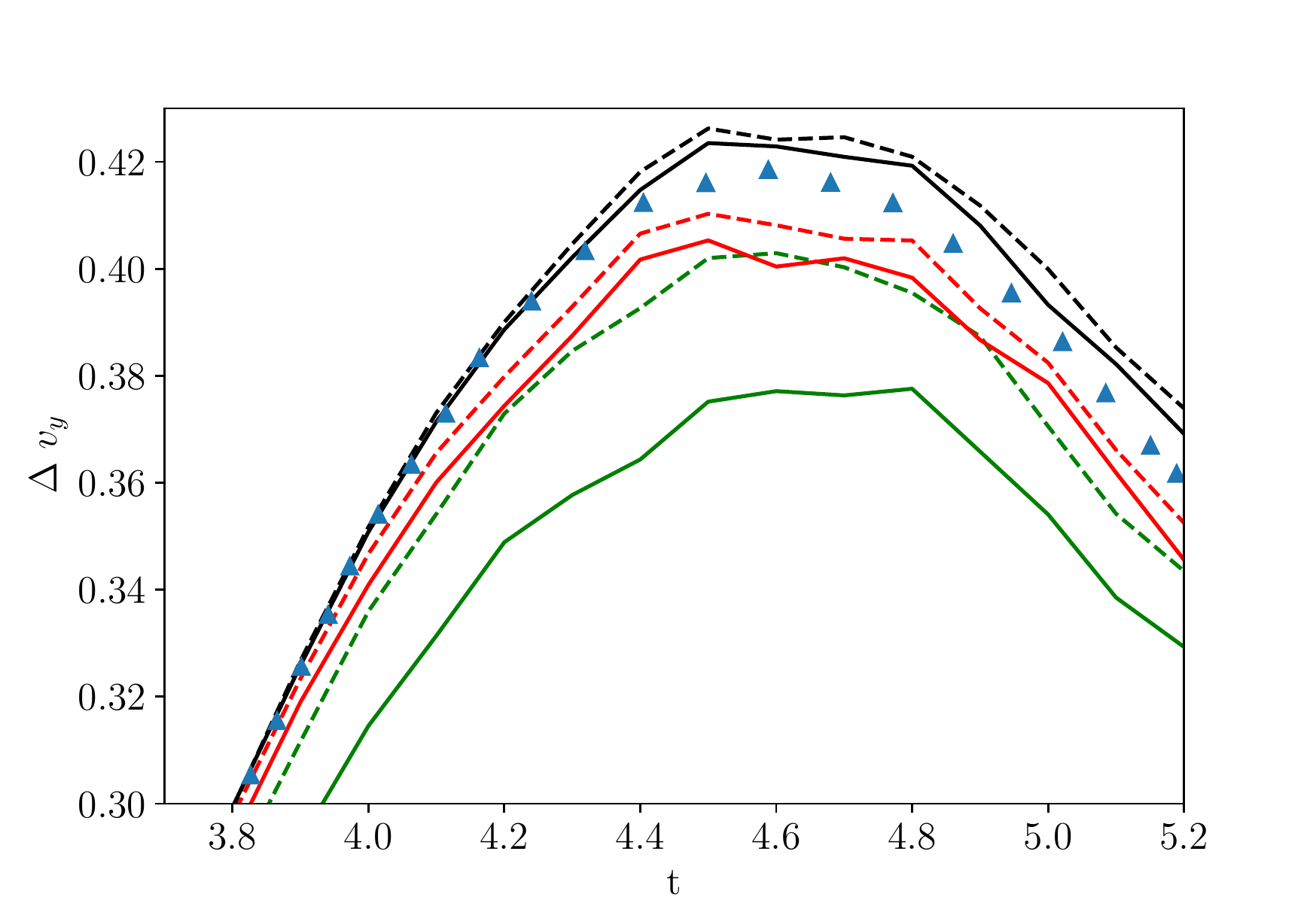}
\caption{Growth rate, $N=3$}
\end{subfigure}
\begin{subfigure}[b]{0.38\linewidth}
\includegraphics[trim=0 0 0 0, clip,
width=\linewidth]{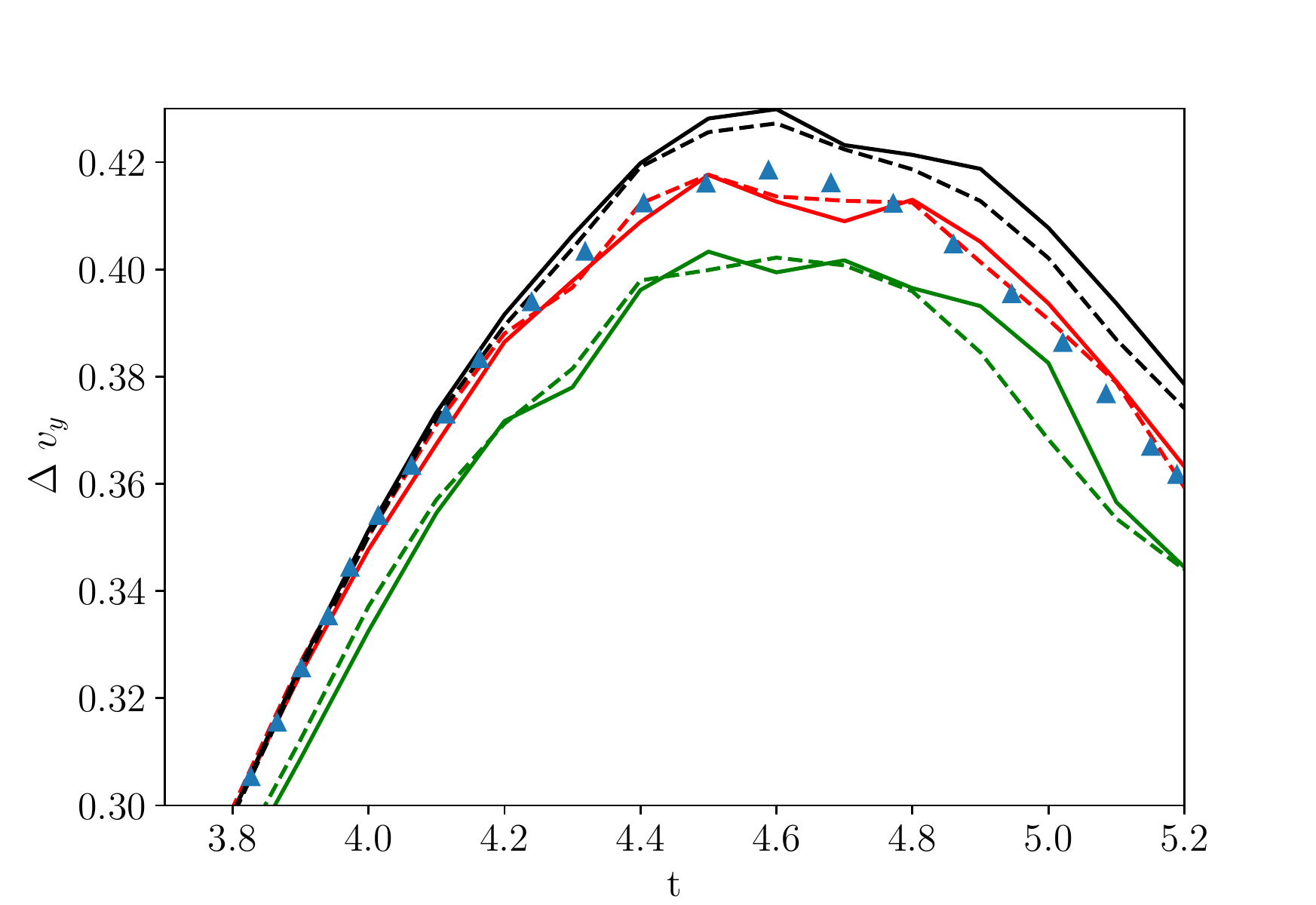}
\caption{Growth rate, $N=7$}
\end{subfigure}
\caption{
Normalized volume-integrated poloidal magnetic energy $\langle B_p^2 \rangle$ (top panels) and growth rate $\Delta v_y :=$ (bottom panels) for the Gauss- and LGL-DGSEM using different resolutions and polynomial degrees.
The number of degrees of freedom (DOF) corresponds to the $x$ direction.
As a reference, we plot in triangles the result obtained by \citet{mignone2010high} with the highest resolution and a fifth-order finite difference method.
}
\label{fig:KHI_plots}
\end{figure}

\section{Conclusions} \label{sec:Conclusions}

In this work, we have presented a provably entropy-stable Discontinuous Galerkin Spectral Element Method based on Gauss points (Gauss-DGSEM) for the GLM-MHD equations.
We have shown numerically that the new method provides formal high-order accuracy and entropy consistency on three-dimensional curvilinear meshes, and better accuracy than the entropy-stable LGL-DGSEM.

Moreover, we reformulated Bohm et al.'s \cite{Bohm2018} entropy-stable LGL-DGSEM discretization of the non-conservative terms of the GLM-MHD equations in a way that fits the Gauss and LGL variants, and proposed two different non-conservative terms that can be used to achieve entropy consistency with generalized SBP operators.

The methods detailed in this paper are implemented in the open-source framework FLUXO (\url{www.github.com/project-fluxo}).
\section*{Acknowledgments}

This work has received funding from the European Research Council through the ERC Starting Grant “An Exascale aware and Un-crashable Space-Time-Adaptive Discontinuous Spectral Element Solver for Non-Linear Conservation Laws” (Extreme), ERC grant agreement no. 714487 (Gregor J. Gassner and Andrés Rueda-Ramírez). 

Jesse Chan gratefully acknowledges support from the National Science Foundation under award DMS-CAREER-1943186.

We furthermore thank the Regional Computing Center of the University of Cologne (RRZK) for providing computing time on the High Performance Computing (HPC) system ODIN as well as support.

\printcredits

\bibliographystyle{model1-num-names}

\bibliography{Biblio.bib}

\begin{thebibliography}{46}
\expandafter\ifx\csname natexlab\endcsname\relax\def\natexlab#1{#1}\fi
\providecommand{\bibinfo}[2]{#2}
\ifx\xfnm\relax \def\xfnm[#1]{\unskip,\space#1}\fi
\bibitem[{Chan et~al.(2019)Chan, Fernandez, Carpenter, {Del Rey
  Fern{\'{a}}ndez}, and Carpenter}]{chan2019efficient}
\bibinfo{author}{J.~Chan}, \bibinfo{author}{D.~C. D.~R. Fernandez},
  \bibinfo{author}{M.~H. Carpenter}, \bibinfo{author}{D.~C. {Del Rey
  Fern{\'{a}}ndez}}, \bibinfo{author}{M.~H. Carpenter},
\newblock \bibinfo{title}{{Efficient entropy stable Gauss collocation
  methods}},
\newblock \bibinfo{journal}{SIAM Journal on Scientific Computing}
  \bibinfo{volume}{41} (\bibinfo{year}{2019}) \bibinfo{pages}{A2938----A2966}.
\bibitem[{Munz et~al.(2000)Munz, Omnes, Schneider, Sonnendr{\"{u}}cker, and
  Vo{\ss}}]{Munz2000}
\bibinfo{author}{C.~D. Munz}, \bibinfo{author}{P.~Omnes},
  \bibinfo{author}{R.~Schneider}, \bibinfo{author}{E.~Sonnendr{\"{u}}cker},
  \bibinfo{author}{U.~Vo{\ss}},
\newblock \bibinfo{title}{{Divergence Correction Techniques for Maxwell Solvers
  Based on a Hyperbolic Model}},
\newblock \bibinfo{journal}{Journal of Computational Physics}
  \bibinfo{volume}{161} (\bibinfo{year}{2000}) \bibinfo{pages}{484--511}.
\bibitem[{Dedner et~al.(2002)Dedner, Kemm, Kr{\"{o}}ner, Munz, Schnitzer, and
  Wesenberg}]{Dedner2002}
\bibinfo{author}{A.~Dedner}, \bibinfo{author}{F.~Kemm},
  \bibinfo{author}{D.~Kr{\"{o}}ner}, \bibinfo{author}{C.~D. Munz},
  \bibinfo{author}{T.~Schnitzer}, \bibinfo{author}{M.~Wesenberg},
\newblock \bibinfo{title}{{Hyperbolic divergence cleaning for the MHD
  equations}},
\newblock \bibinfo{journal}{Journal of Computational Physics}
  \bibinfo{volume}{175} (\bibinfo{year}{2002}) \bibinfo{pages}{645--673}.
\bibitem[{Derigs et~al.(2018)Derigs, Winters, Gassner, Walch, and
  Bohm}]{Derigs2018}
\bibinfo{author}{D.~Derigs}, \bibinfo{author}{A.~R. Winters},
  \bibinfo{author}{G.~J. Gassner}, \bibinfo{author}{S.~Walch},
  \bibinfo{author}{M.~Bohm},
\newblock \bibinfo{title}{{Ideal GLM-MHD: About the entropy consistent
  nine-wave magnetic field divergence diminishing ideal magnetohydrodynamics
  equations}},
\newblock \bibinfo{journal}{Journal of Computational Physics}
  \bibinfo{volume}{364} (\bibinfo{year}{2018}) \bibinfo{pages}{420--467}.
\bibitem[{Chandrashekar and Klingenberg(2016)}]{Chandrashekar2016}
\bibinfo{author}{P.~Chandrashekar}, \bibinfo{author}{C.~Klingenberg},
\newblock \bibinfo{title}{{Entropy stable finite volume scheme for ideal
  compressible MHD on 2-D Cartesian meshes}},
\newblock \bibinfo{journal}{SIAM Journal on Numerical Analysis}
  \bibinfo{volume}{54} (\bibinfo{year}{2016}) \bibinfo{pages}{1313--1340}.
\bibitem[{Hindenlang et~al.(2012)Hindenlang, Gassner, Altmann, Beck,
  Staudenmaier, and Munz}]{Hindenlang2012}
\bibinfo{author}{F.~Hindenlang}, \bibinfo{author}{G.~J. Gassner},
  \bibinfo{author}{C.~Altmann}, \bibinfo{author}{A.~Beck},
  \bibinfo{author}{M.~Staudenmaier}, \bibinfo{author}{C.~D. Munz},
\newblock \bibinfo{title}{{Explicit discontinuous Galerkin methods for unsteady
  problems}},
\newblock \bibinfo{journal}{Computers and Fluids} \bibinfo{volume}{61}
  (\bibinfo{year}{2012}) \bibinfo{pages}{86--93}.
\bibitem[{Wang et~al.(2013)Wang, Fidkowski, Abgrall, Bassi, Caraeni, Cary,
  Deconinck, Hartmann, Hillewaert, Huynh, Kroll, May, Persson, van Leer,
  Visbal, van Leer, and Visbal}]{Wang2013High}
\bibinfo{author}{Z.~J. Wang}, \bibinfo{author}{K.~Fidkowski},
  \bibinfo{author}{R.~Abgrall}, \bibinfo{author}{F.~Bassi},
  \bibinfo{author}{D.~Caraeni}, \bibinfo{author}{A.~Cary},
  \bibinfo{author}{H.~Deconinck}, \bibinfo{author}{R.~Hartmann},
  \bibinfo{author}{K.~Hillewaert}, \bibinfo{author}{H.~T. Huynh},
  \bibinfo{author}{N.~Kroll}, \bibinfo{author}{G.~May}, \bibinfo{author}{P.-O.
  Persson}, \bibinfo{author}{B.~van Leer}, \bibinfo{author}{M.~Visbal},
  \bibinfo{author}{B.~van Leer}, \bibinfo{author}{M.~Visbal},
\newblock \bibinfo{title}{{High-order CFD methods: current status and
  perspective}},
\newblock \bibinfo{journal}{International Journal for Numerical Methods in
  Fluids} \bibinfo{volume}{72} (\bibinfo{year}{2013})
  \bibinfo{pages}{811--845}.
\bibitem[{Cockburn et~al.(2000)Cockburn, Karniadakis, and Shu}]{Cockburn2000}
\bibinfo{author}{B.~Cockburn}, \bibinfo{author}{G.~E. Karniadakis},
  \bibinfo{author}{C.-W. Shu},
\newblock \bibinfo{title}{{The Development of Discontinuous Galerkin Methods}},
\newblock \bibinfo{journal}{Discontinuous Galerkin Methods}
  \bibinfo{volume}{11} (\bibinfo{year}{2000}) \bibinfo{pages}{3--50}.
\bibitem[{Rivi{\`{e}}re(2008)}]{Riviere2008}
\bibinfo{author}{B.~Rivi{\`{e}}re}, \bibinfo{title}{{Discontinuous Galerkin
  Methods for Solving Elliptic and Parabolic Equations Theory and
  Implementation}}, \bibinfo{publisher}{SIAM}, \bibinfo{year}{2008}.
\bibitem[{Kopriva et~al.(2002)Kopriva, Woodruff, and Hussaini}]{Kopriva2002}
\bibinfo{author}{D.~A. Kopriva}, \bibinfo{author}{S.~L. Woodruff},
  \bibinfo{author}{M.~Y. Hussaini},
\newblock \bibinfo{title}{{Computation of electromagnetic scattering with a
  non-conforming discontinuous spectral element method}},
\newblock \bibinfo{journal}{International Journal for Numerical Methods in
  Engineering} \bibinfo{volume}{53} (\bibinfo{year}{2002})
  \bibinfo{pages}{105--122}.
\bibitem[{Rueda-Ram{\'{i}}rez et~al.(2019{\natexlab{a}})Rueda-Ram{\'{i}}rez,
  Manzanero, Ferrer, Rubio, and Valero}]{RuedaRamirez2019a}
\bibinfo{author}{A.~M. Rueda-Ram{\'{i}}rez}, \bibinfo{author}{J.~Manzanero},
  \bibinfo{author}{E.~Ferrer}, \bibinfo{author}{G.~Rubio},
  \bibinfo{author}{E.~Valero},
\newblock \bibinfo{title}{{A p-multigrid strategy with anisotropic p-adaptation
  based on truncation errors for high-order discontinuous Galerkin methods}},
\newblock \bibinfo{journal}{Journal of Computational Physics}
  \bibinfo{volume}{378} (\bibinfo{year}{2019}{\natexlab{a}})
  \bibinfo{pages}{209--233}.
\bibitem[{Rueda-Ram{\'{i}}rez et~al.(2019{\natexlab{b}})Rueda-Ram{\'{i}}rez,
  Rubio, Ferrer, and Valero}]{RuedaRamirez2019}
\bibinfo{author}{A.~M. Rueda-Ram{\'{i}}rez}, \bibinfo{author}{G.~Rubio},
  \bibinfo{author}{E.~Ferrer}, \bibinfo{author}{E.~Valero},
\newblock \bibinfo{title}{{Truncation Error Estimation in the p-Anisotropic
  Discontinuous Galerkin Spectral Element Method}},
\newblock \bibinfo{journal}{Journal of Scientific Computing}
  \bibinfo{volume}{78} (\bibinfo{year}{2019}{\natexlab{b}})
  \bibinfo{pages}{433--466}.
\bibitem[{Gassner(2013)}]{Gassner2013}
\bibinfo{author}{G.~J. Gassner},
\newblock \bibinfo{title}{{A Skew-Symmetric Discontinuous Galerkin Spectral
  Element Discretization and Its Relation to SBP-SAT Finite Difference
  Methods}},
\newblock \bibinfo{journal}{SIAM Journal on Scientific Computing}
  \bibinfo{volume}{35} (\bibinfo{year}{2013}) \bibinfo{pages}{A1233--A1253}.
\bibitem[{Fisher et~al.(2013)Fisher, Carpenter, Nordstr{\"{o}}m, Yamaleev, and
  Swanson}]{Fisher2013}
\bibinfo{author}{T.~C. Fisher}, \bibinfo{author}{M.~H. Carpenter},
  \bibinfo{author}{J.~Nordstr{\"{o}}m}, \bibinfo{author}{N.~K. Yamaleev},
  \bibinfo{author}{C.~Swanson},
\newblock \bibinfo{title}{{Discretely conservative finite-difference
  formulations for nonlinear conservation laws in split form: Theory and
  boundary conditions}},
\newblock \bibinfo{journal}{Journal of Computational Physics}
  \bibinfo{volume}{234} (\bibinfo{year}{2013}) \bibinfo{pages}{353--375}.
\bibitem[{Carpenter et~al.(2014)Carpenter, Fisher, Nielsen, and
  Frankel}]{Carpenter2014}
\bibinfo{author}{M.~H. Carpenter}, \bibinfo{author}{T.~C. Fisher},
  \bibinfo{author}{E.~J. Nielsen}, \bibinfo{author}{S.~H. Frankel},
\newblock \bibinfo{title}{{Entropy stable spectral collocation schemes for the
  Navier-Stokes Equations: Discontinuous interfaces}},
\newblock \bibinfo{journal}{SIAM Journal on Scientific Computing}
  \bibinfo{volume}{36} (\bibinfo{year}{2014}) \bibinfo{pages}{B835--B867}.
\bibitem[{Wintermeyer et~al.(2017)Wintermeyer, Winters, Gassner, and
  Kopriva}]{wintermeyer2017entropy}
\bibinfo{author}{N.~Wintermeyer}, \bibinfo{author}{A.~R. Winters},
  \bibinfo{author}{G.~J. Gassner}, \bibinfo{author}{D.~A. Kopriva},
\newblock \bibinfo{title}{An entropy stable nodal discontinuous {G}alerkin
  method for the two dimensional shallow water equations on unstructured
  curvilinear meshes with discontinuous bathymetry},
\newblock \bibinfo{journal}{Journal of Computational Physics}
  \bibinfo{volume}{340} (\bibinfo{year}{2017}) \bibinfo{pages}{200--242}.
\bibitem[{Manzanero et~al.(2020)Manzanero, Rubio, Kopriva, Ferrer, and
  Valero}]{manzanero2020entropy}
\bibinfo{author}{J.~Manzanero}, \bibinfo{author}{G.~Rubio},
  \bibinfo{author}{D.~A. Kopriva}, \bibinfo{author}{E.~Ferrer},
  \bibinfo{author}{E.~Valero},
\newblock \bibinfo{title}{An entropy--stable discontinuous {G}alerkin
  approximation for the incompressible navier--stokes equations with variable
  density and artificial compressibility},
\newblock \bibinfo{journal}{Journal of Computational Physics}
  \bibinfo{volume}{408} (\bibinfo{year}{2020}) \bibinfo{pages}{109241}.
\bibitem[{Gassner et~al.(2018)Gassner, Winters, Hindenlang, and
  Kopriva}]{Gassner2018}
\bibinfo{author}{G.~J. Gassner}, \bibinfo{author}{A.~R. Winters},
  \bibinfo{author}{F.~J. Hindenlang}, \bibinfo{author}{D.~A. Kopriva},
\newblock \bibinfo{title}{{The BR1 scheme is stable for the compressible
  Navier--Stokes equations}},
\newblock \bibinfo{journal}{Journal of Scientific Computing}
  \bibinfo{volume}{77} (\bibinfo{year}{2018}) \bibinfo{pages}{154--200}.
\bibitem[{Renac(2019)}]{Renac2019}
\bibinfo{author}{F.~Renac},
\newblock \bibinfo{title}{{Entropy stable DGSEM for nonlinear hyperbolic
  systems in nonconservative form with application to two-phase flows}},
\newblock \bibinfo{journal}{Journal of Computational Physics}
  \bibinfo{volume}{382} (\bibinfo{year}{2019}) \bibinfo{pages}{1--26}.
\bibitem[{Manzanero et~al.(2020)Manzanero, Rubio, Kopriva, Ferrer, and
  Valero}]{manzanero2020entropy2}
\bibinfo{author}{J.~Manzanero}, \bibinfo{author}{G.~Rubio},
  \bibinfo{author}{D.~A. Kopriva}, \bibinfo{author}{E.~Ferrer},
  \bibinfo{author}{E.~Valero},
\newblock \bibinfo{title}{Entropy--stable discontinuous {G}alerkin
  approximation with summation--by--parts property for the incompressible
  navier--stokes/cahn--hilliard system},
\newblock \bibinfo{journal}{Journal of Computational Physics}
  \bibinfo{volume}{408} (\bibinfo{year}{2020}) \bibinfo{pages}{109363}.
\bibitem[{Bohm et~al.(2018)Bohm, Winters, Gassner, Derigs, Hindenlang, and
  Saur}]{Bohm2018}
\bibinfo{author}{M.~Bohm}, \bibinfo{author}{A.~R. Winters},
  \bibinfo{author}{G.~J. Gassner}, \bibinfo{author}{D.~Derigs},
  \bibinfo{author}{F.~Hindenlang}, \bibinfo{author}{J.~Saur},
\newblock \bibinfo{title}{{An entropy stable nodal discontinuous Galerkin
  method for the resistive MHD equations. Part I: Theory and numerical
  verification}},
\newblock \bibinfo{journal}{Journal of Computational Physics}
  \bibinfo{volume}{1} (\bibinfo{year}{2018}) \bibinfo{pages}{1--35}.
\bibitem[{Chan and Wilcox(2019)}]{chan2019discretely}
\bibinfo{author}{J.~Chan}, \bibinfo{author}{L.~C. Wilcox},
\newblock \bibinfo{title}{On discretely entropy stable weight-adjusted
  discontinuous {G}alerkin methods: curvilinear meshes},
\newblock \bibinfo{journal}{Journal of Computational Physics}
  \bibinfo{volume}{378} (\bibinfo{year}{2019}) \bibinfo{pages}{366--393}.
\bibitem[{Ortleb(2016)}]{ortleb2016kinetic}
\bibinfo{author}{S.~Ortleb},
\newblock \bibinfo{title}{Kinetic energy preserving dg schemes based on
  summation-by-parts operators on interior node distributions},
\newblock \bibinfo{journal}{PAMM} \bibinfo{volume}{16} (\bibinfo{year}{2016})
  \bibinfo{pages}{857--858}.
\bibitem[{Ortleb(2017)}]{ortleb2017kinetic}
\bibinfo{author}{S.~Ortleb},
\newblock \bibinfo{title}{A kinetic energy preserving dg scheme based on
  gauss--legendre points},
\newblock \bibinfo{journal}{Journal of Scientific Computing}
  \bibinfo{volume}{71} (\bibinfo{year}{2017}) \bibinfo{pages}{1135--1168}.
\bibitem[{Del Rey~Fern{\'a}ndez et~al.(2019)Del Rey~Fern{\'a}ndez, Boom,
  Carpenter, and Zingg}]{del2019extension}
\bibinfo{author}{D.~C. Del Rey~Fern{\'a}ndez}, \bibinfo{author}{P.~D. Boom},
  \bibinfo{author}{M.~H. Carpenter}, \bibinfo{author}{D.~W. Zingg},
\newblock \bibinfo{title}{Extension of tensor-product generalized and
  dense-norm summation-by-parts operators to curvilinear coordinates},
\newblock \bibinfo{journal}{Journal of Scientific Computing}
  \bibinfo{volume}{80} (\bibinfo{year}{2019}) \bibinfo{pages}{1957--1996}.
\bibitem[{Hicken et~al.(2016)Hicken, Del Rey~Fern{\'a}ndez, and
  Zingg}]{hicken2016multidimensional}
\bibinfo{author}{J.~E. Hicken}, \bibinfo{author}{D.~C. Del Rey~Fern{\'a}ndez},
  \bibinfo{author}{D.~W. Zingg},
\newblock \bibinfo{title}{Multidimensional summation-by-parts operators:
  general theory and application to simplex elements},
\newblock \bibinfo{journal}{SIAM Journal on Scientific Computing}
  \bibinfo{volume}{38} (\bibinfo{year}{2016}) \bibinfo{pages}{A1935--A1958}.
\bibitem[{Rueda-Ram{\'{i}}rez et~al.(2020)Rueda-Ram{\'{i}}rez, Ferrer, Kopriva,
  Rubio, and Valero}]{RuedaRamirez2020}
\bibinfo{author}{A.~M. Rueda-Ram{\'{i}}rez}, \bibinfo{author}{E.~Ferrer},
  \bibinfo{author}{D.~A. Kopriva}, \bibinfo{author}{G.~Rubio},
  \bibinfo{author}{E.~Valero},
\newblock \bibinfo{title}{{A statically condensed discontinuous Galerkin
  spectral element method on Gauss-Lobatto nodes for the compressible
  Navier-Stokes equations}},
\newblock \bibinfo{journal}{Journal of Computational Physics}
  (\bibinfo{year}{2020}).
\bibitem[{Rueda-Ram{\'{i}}rez et~al.(2021)Rueda-Ram{\'{i}}rez, Hennemann,
  Hindenlang, Winters, and Gassner}]{Rueda-Ramirez2020}
\bibinfo{author}{A.~M. Rueda-Ram{\'{i}}rez}, \bibinfo{author}{S.~Hennemann},
  \bibinfo{author}{F.~J. Hindenlang}, \bibinfo{author}{A.~R. Winters},
  \bibinfo{author}{G.~J. Gassner}, \bibinfo{title}{{An entropy stable nodal
  discontinuous Galerkin method for the resistive MHD equations. Part II:
  Subcell finite volume shock capturing}}, volume \bibinfo{volume}{444},
  \bibinfo{year}{2021}.
\bibitem[{Ismail and Roe(2009)}]{Ismail2009}
\bibinfo{author}{F.~Ismail}, \bibinfo{author}{P.~L. Roe},
\newblock \bibinfo{title}{{Affordable, entropy-consistent Euler flux functions
  II: Entropy production at shocks}},
\newblock \bibinfo{journal}{Journal of Computational Physics}
  \bibinfo{volume}{228} (\bibinfo{year}{2009}) \bibinfo{pages}{5410--5436}.
\bibitem[{Powell et~al.(1999)Powell, Roe, Linde, Gombosi, and {De
  Zeeuw}}]{Powell2001}
\bibinfo{author}{K.~G. Powell}, \bibinfo{author}{P.~L. Roe},
  \bibinfo{author}{T.~J. Linde}, \bibinfo{author}{T.~I. Gombosi},
  \bibinfo{author}{D.~L. {De Zeeuw}},
\newblock \bibinfo{title}{{A Solution-Adaptive Upwind Scheme for Ideal
  Magnetohydrodynamics}},
\newblock \bibinfo{journal}{Journal of Computational Physics}
  \bibinfo{volume}{154} (\bibinfo{year}{1999}) \bibinfo{pages}{284--309}.
\bibitem[{Derigs et~al.(2017)Derigs, Winters, Gassner, and Walch}]{Derigs2017}
\bibinfo{author}{D.~Derigs}, \bibinfo{author}{A.~R. Winters},
  \bibinfo{author}{G.~J. Gassner}, \bibinfo{author}{S.~Walch},
\newblock \bibinfo{title}{{A novel averaging technique for discrete
  entropy-stable dissipation operators for ideal MHD}},
\newblock \bibinfo{journal}{Journal of Computational Physics}
  \bibinfo{volume}{330} (\bibinfo{year}{2017}) \bibinfo{pages}{624--632}.
\bibitem[{Gassner et~al.(2016)Gassner, Winters, and Kopriva}]{Gassner2016}
\bibinfo{author}{G.~J. Gassner}, \bibinfo{author}{A.~R. Winters},
  \bibinfo{author}{D.~A. Kopriva},
\newblock \bibinfo{title}{{Split form nodal discontinuous Galerkin schemes with
  summation-by-parts property for the compressible Euler equations}},
\newblock \bibinfo{journal}{Journal of Computational Physics}
  \bibinfo{volume}{327} (\bibinfo{year}{2016}) \bibinfo{pages}{39--66}.
\bibitem[{Fisher and Carpenter(2013)}]{Fisher2013a}
\bibinfo{author}{T.~C. Fisher}, \bibinfo{author}{M.~H. Carpenter},
\newblock \bibinfo{title}{{High-order entropy stable finite difference schemes
  for nonlinear conservation laws: Finite domains}},
\newblock \bibinfo{journal}{Journal of Computational Physics}
  \bibinfo{volume}{252} (\bibinfo{year}{2013}) \bibinfo{pages}{518--557}.
\bibitem[{Hennemann et~al.(2020)Hennemann, Rueda-Ram{\'{i}}rez, Hindenlang, and
  Gassner}]{Hennemann2020}
\bibinfo{author}{S.~Hennemann}, \bibinfo{author}{A.~M. Rueda-Ram{\'{i}}rez},
  \bibinfo{author}{F.~J. Hindenlang}, \bibinfo{author}{G.~J. Gassner},
\newblock \bibinfo{title}{{A provably entropy stable subcell shock capturing
  approach for high order split form DG for the compressible Euler equations}},
\newblock \bibinfo{journal}{Journal of Computational Physics}
  (\bibinfo{year}{2020}) \bibinfo{pages}{109935}.
\bibitem[{Kopriva(2006)}]{Kopriva2006}
\bibinfo{author}{D.~A. Kopriva},
\newblock \bibinfo{title}{{Metric identities and the discontinuous spectral
  element method on curvilinear meshes}},
\newblock \bibinfo{journal}{Journal of Scientific Computing}
  \bibinfo{volume}{26} (\bibinfo{year}{2006}) \bibinfo{pages}{301--327}.
\bibitem[{Kopriva(2009)}]{kopriva2009implementing}
\bibinfo{author}{D.~A. Kopriva}, \bibinfo{title}{Implementing spectral methods
  for partial differential equations: Algorithms for scientists and engineers},
  \bibinfo{publisher}{Springer Science \& Business Media},
  \bibinfo{year}{2009}.
\bibitem[{Manzanero(2020)}]{Manzanero2020}
\bibinfo{author}{J.~Manzanero}, \bibinfo{title}{{A high-order discontinuous
  {G}alerkin multiphase flow solver for industrial applications}}, Ph.D.
  thesis, Universidad Polit{\'{e}}cnica de Madrid, \bibinfo{year}{2020}.
\bibitem[{Carpenter and Kennedy(1994)}]{carpenter1994fourth}
\bibinfo{author}{M.~H. Carpenter}, \bibinfo{author}{C.~A. Kennedy},
  \bibinfo{title}{Fourth-order 2N-storage Runge-Kutta schemes},
  \bibinfo{type}{Technical Report}, \bibinfo{year}{1994}.
\bibitem[{Krais et~al.(2021)Krais, Beck, Bolemann, Frank, Flad, Gassner,
  Hindenlang, Hoffmann, Kuhn, Sonntag, and Munz}]{Krais2019}
\bibinfo{author}{N.~Krais}, \bibinfo{author}{A.~Beck},
  \bibinfo{author}{T.~Bolemann}, \bibinfo{author}{H.~Frank},
  \bibinfo{author}{D.~Flad}, \bibinfo{author}{G.~Gassner},
  \bibinfo{author}{F.~Hindenlang}, \bibinfo{author}{M.~Hoffmann},
  \bibinfo{author}{T.~Kuhn}, \bibinfo{author}{M.~Sonntag},
  \bibinfo{author}{C.~D. Munz},
\newblock \bibinfo{title}{{FLEXI: A high order discontinuous Galerkin framework
  for hyperbolic–parabolic conservation laws}},
\newblock \bibinfo{journal}{Computers and Mathematics with Applications}
  \bibinfo{volume}{81} (\bibinfo{year}{2021}) \bibinfo{pages}{186--219}.
\bibitem[{Hindenlang et~al.(2015)Hindenlang, Bolemann, and
  Munz}]{hindenlang2015mesh}
\bibinfo{author}{F.~Hindenlang}, \bibinfo{author}{T.~Bolemann},
  \bibinfo{author}{C.-D. Munz}, \bibinfo{title}{{Mesh Curving Techniques for
  High Order Discontinuous Galerkin Simulations}}, Ph.D. thesis, University of
  Stuttgart, \bibinfo{year}{2015}.
\bibitem[{Gassner and Kopriva(2011)}]{gassner2011comparison}
\bibinfo{author}{G.~Gassner}, \bibinfo{author}{D.~A. Kopriva},
\newblock \bibinfo{title}{A comparison of the dispersion and dissipation errors
  of gauss and gauss--lobatto discontinuous galerkin spectral element methods},
\newblock \bibinfo{journal}{SIAM Journal on Scientific Computing}
  \bibinfo{volume}{33} (\bibinfo{year}{2011}) \bibinfo{pages}{2560--2579}.
\bibitem[{Stone et~al.(2008)Stone, Gardiner, Teuben, Hawley, and
  Simon}]{Stone2008}
\bibinfo{author}{J.~M. Stone}, \bibinfo{author}{T.~A. Gardiner},
  \bibinfo{author}{P.~Teuben}, \bibinfo{author}{J.~F. Hawley},
  \bibinfo{author}{J.~B. Simon},
\newblock \bibinfo{title}{{Athena: A New Code for Astrophysical MHD}},
\newblock \bibinfo{journal}{The Astrophysical Journal Supplement Series}
  \bibinfo{volume}{178} (\bibinfo{year}{2008}) \bibinfo{pages}{137--177}.
\bibitem[{Hindenlang and Gassner(2019)}]{hindenlang2019new}
\bibinfo{author}{F.~Hindenlang}, \bibinfo{author}{G.~Gassner},
\newblock \bibinfo{title}{A new entropy conservative two-point flux for ideal
  mhd equations derived from first principles},
\newblock \bibinfo{journal}{Talk presented at HONOM}  (\bibinfo{year}{2019}).
\bibitem[{Winters and Gassner(2016)}]{Winters2016}
\bibinfo{author}{A.~R. Winters}, \bibinfo{author}{G.~J. Gassner},
\newblock \bibinfo{title}{{Affordable, entropy conserving and entropy stable
  flux functions for the ideal MHD equations}},
\newblock \bibinfo{journal}{Journal of Computational Physics}
  \bibinfo{volume}{304} (\bibinfo{year}{2016}) \bibinfo{pages}{72--108}.
\bibitem[{Mignone et~al.(2010)Mignone, Tzeferacos, and Bodo}]{mignone2010high}
\bibinfo{author}{A.~Mignone}, \bibinfo{author}{P.~Tzeferacos},
  \bibinfo{author}{G.~Bodo},
\newblock \bibinfo{title}{High-order conservative finite difference {GLM--MHD}
  schemes for cell-centered {MHD}},
\newblock \bibinfo{journal}{Journal of Computational Physics}
  \bibinfo{volume}{229} (\bibinfo{year}{2010}) \bibinfo{pages}{5896--5920}.
\bibitem[{Chen and Shu(2020)}]{chen2020review}
\bibinfo{author}{T.~Chen}, \bibinfo{author}{C.-W. Shu},
\newblock \bibinfo{title}{{Review of entropy stable discontinuous Galerkin
  methods for systems of conservation laws on unstructured simplex meshes}}
  (\bibinfo{year}{2020}).

\end{thebibliography}

\section*{Appendices}
\appendix

\section{Direct Carryover of the GLM-MHD Equations to Gauss-DGSEM } \label{sec:FirstAttemptGaussMHD}

The most straightforward way to carry over the non-conservative discretization of \citet{Bohm2018}, \eqref{eq:LGL_DGSEM_GLMMHD}, to the Gauss-DGSEM of \citet{chan2019efficient}, \eqref{eq:Gauss_DGSEM_cons}, is to generalize the non-conservative part of the volume integral using $\hat{\mat{B}}$ and use the traditional volume non-conservative term for the new terms that connect the inner degrees of freedom with the boundaries:
\small
\begin{empheq}{align} \label{eq:Gauss_DGSEM_GLMMHD_other} 
J& \omega_j \dot{\state{u}}_j 
+ \sum_{k=0}^N \left( \hat{S}_{jk} \state{f}^{*}_{(j,k)} + (Q_{jk} - \hat{B}_{jk}) {\Jan}^*_{(j,k)} \right)
\nonumber \\
-& \ell_{j}(-1) \left[
   \state{f}^* \left( \state{u}_j , \tilde{\state{u}}_L \right)
 + {\Jan}^* \left( \state{u}_j , \tilde{\state{u}}_L \right)
 - \sum_{k=0}^N \ell_{k}(-1) \left( 
 	\state{f}^* \left( \tilde{\state{u}}_L , \state{u}_k \right)
   +{\Jan}^* \left( \tilde{\state{u}}_L , \state{u}_k \right) 
 \right)
 + \numfluxb{f}^a \left( \tilde{\state{u}}_L, \tilde{\state{u}}^{+}_L \right)
 + \numnonconsD{\Jan} \left( \tilde{\state{u}}_L, \tilde{\state{u}}^{+}_L \right)
 \right]
\nonumber\\
+& \ell_{j}(+1) \left[
 \state{f}^* \left( \state{u}_j , \tilde{\state{u}}_R \right)
 + {\Jan}^* \left( \state{u}_j , \tilde{\state{u}}_R \right)
 - \sum_{k=0}^N \ell_{k}(+1) \left(
 	 \state{f}^* \left( \state{u}_k , \tilde{\state{u}}_R \right)
 	 +{\Jan}^* \left( \tilde{\state{u}}_R , \state{u}_k \right) 
 \right)
 + \numfluxb{f}^a \left( \tilde{\state{u}}_R, \tilde{\state{u}}^{+}_R \right)
 + \numnonconsD{\Jan} \left( \tilde{\state{u}}_R, \tilde{\state{u}}^{+}_R \right)
\right]
= \state{0}.
\end{empheq}
\normalsize

\begin{remark}
It is plain to see that \eqref{eq:Gauss_DGSEM_GLMMHD_other} is algebraically equivalent to the entropy-stable LGL-DGSEM discretization of the GLM-MHD system of \citet{Bohm2018}, \eqref{eq:LGL_DGSEM_GLMMHD}, when LGL nodes are used.
Hence, it provides entropy conservation/stability for that choice of nodes.
However, when Gauss nodes are used, \eqref{eq:Gauss_DGSEM_GLMMHD_other} differs from \eqref{eq:Gauss_DGSEM_GLMMHD}, which can be easily seen when we subtract the two equations.

When we subtract the volume terms of \eqref{eq:Gauss_DGSEM_GLMMHD_other} from \eqref{eq:Gauss_DGSEM_GLMMHD} we obtain
\begin{align} \label{eq:volTerms}
\state{V} &= \sum_{k=0}^N \left( 
    \hat{S}_{jk} \numnonconsD{\Jan}_{(j,k)}
    - \left( Q_{jk} - \hat{B}_{jk} \right) \Jan^*_{(j,k)}
    \right)
    \nonumber\\
\text{(def. of $\hat{\mat{S}}$, \eqref{eq:Shat_mat} \& identity \eqref{eq:nonConsIdentity})} \qquad
    &= \sum_{k=0}^N \left( 
    \left( 2Q_{jk} - \hat{B}_{jk} \right) \numnonconsD{\Jan}_{(j,k)} 
    - \left( Q_{jk} - \hat{B}_{jk} \right)  \left( 2\numnonconsD{\Jan}_{(j,k)} - \Jan_j \right)
    \right)
    \nonumber\\
\text{(SBP property \eqref{eq:genSBPprop}, and simp.)} \qquad
    &=
    \sum_{k=0}^N 
    \hat{B}_{jk} \left(
    \numnonconsD{\Jan}_{(j,k)} - \Jan_j
    \right).
\end{align}
Clearly, $\state{V}=\state{0}$ when LGL nodes are used due to the consistency property of $\numnonconsD{\Jan}$ and because $\hat{\mat{B}} \eqLGL \mat{B}$ is a diagonal matrix.
However, $\hat{\mat{B}}$ is dense for Gauss nodes, and so $\state{V} \ne \state{0}$ in general.

When we subtract the rest of the terms of \eqref{eq:Gauss_DGSEM_GLMMHD_other} from \eqref{eq:Gauss_DGSEM_GLMMHD} we obtain
\begin{align} \label{eq:otherTerms}
\state{F} =
   &- \ell_{j}(-1) \left[
   \numnonconsD{\Jan} \left( \state{u}_j , \tilde{\state{u}}_L \right)
   - \Jan^* \left( \state{u}_j , \tilde{\state{u}}_L \right)
 - \sum_{k=0}^N \ell_{k}(-1) \left( 
 	\numnonconsD{\Jan} \left( \tilde{\state{u}}_L , \state{u}_k 
 	\right) 
 	-\Jan^* \left( \tilde{\state{u}}_L , \state{u}_k 
 	\right) 
 \right)
 \right]
 \nonumber\\
 &+
 \ell_{j}(+1) \left[
   \numnonconsD{\Jan} \left( \state{u}_j , \tilde{\state{u}}_R \right)
   - \Jan^* \left( \state{u}_j , \tilde{\state{u}}_R \right)
 - \sum_{k=0}^N \ell_{k}(+1) \left( 
 	\numnonconsD{\Jan} \left( \tilde{\state{u}}_R , \state{u}_k 
 	\right) 
 	-\Jan^* \left( \tilde{\state{u}}_R , \state{u}_k 
 	\right) 
 \right)
 \right]
  \nonumber\\
\text{(identity \eqref{eq:nonConsIdentity})} \qquad
    =&- \ell_{j}(-1) \left[
   \Jan_j 
   - \numnonconsD{\Jan} \left( \state{u}_j , \tilde{\state{u}}_L \right)
 - \sum_{k=0}^N \ell_{k}(-1) \left( 
 	\Jan \left( \tilde{\state{u}}_L \right) 
 	- \numnonconsD{\Jan} \left( \tilde{\state{u}}_L , \state{u}_k 
 	\right) 
 \right)
 \right]
 \nonumber\\
 &+
 \ell_{j}(+1) \left[
   \Jan_j
   -\numnonconsD{\Jan} \left( \state{u}_j , \tilde{\state{u}}_R \right)
 - \sum_{k=0}^N \ell_{k}(+1) \left( 
 	\Jan \left( \tilde{\state{u}}_R \right) 
 	-\numnonconsD{\Jan} \left( \tilde{\state{u}}_R , \state{u}_k 
 	\right) 
 \right)
 \right],
\end{align}
which is again zero for LGL, but not necessarily for Gauss.

When we sum the contributions of \eqref{eq:volTerms} and \eqref{eq:otherTerms}, some terms cancel out due to the generalized SBP properties, \eqref{eq:genSBPprop}.
We obtain
\begin{align*}
    \state{V} + \state{F} =&+ \ell_{j}(-1) \left[
    \numnonconsD{\Jan} \left( \state{u}_j , \tilde{\state{u}}_L \right)
 + \sum_{k=0}^N \ell_{k}(-1) \left( 
 	\Jan \left( \tilde{\state{u}}_L \right) 
 	- \numnonconsD{\Jan} \left( \tilde{\state{u}}_L , \state{u}_k 
 	\right) 
 \right)
 \right]
 \nonumber\\
 &-
 \ell_{j}(+1) \left[
   \numnonconsD{\Jan} \left( \state{u}_j , \tilde{\state{u}}_R \right)
 + \sum_{k=0}^N \ell_{k}(+1) \left( 
 	\Jan \left( \tilde{\state{u}}_R \right) 
 	-\numnonconsD{\Jan} \left( \tilde{\state{u}}_R , \state{u}_k 
 	\right) 
 \right)
 \right]
 +
 \sum_{k=0}^N 
    \hat{B}_{jk} \numnonconsD{\Jan}_{(j,k)},
\end{align*}
which is non-zero in the general case for Gauss nodes.
\end{remark}

Due to the difference between the two Gauss-DGSEM discretizations, \eqref{eq:Gauss_DGSEM_GLMMHD_other} fails to provide entropy conservation/stability, as is shown in the next section.

\subsection{Entropy Balance}
To obtain the entropy balance of \eqref{eq:Gauss_DGSEM_GLMMHD_other}, we follow a very similar manipulation of terms as in the proof of Lemma \ref{lemma:Entropy_Gauss_DGSEM_GLMMHD}.
For the volume terms we obtain
\begin{align} \label{eq:EntBalance_Gauss_GLMMHD_other_vol}
(a)
	=
	 \sum_{j=0}^{N} \entVar^T_j \sum_{k=0}^N \left( \hat{S}_{jk} \state{f}^{*}_{(j,k)} + (Q_{jk} - \hat{B}_{jk}) {\Jan}^*_{(j,k)} \right) 
	=
	 {\Psi}_L - {\Psi}_R - \frac{1}{2} \sum_{j,k=0}^N \hat{S}_{jk} r_{(j,k)}
	- \sum_{j,k=0}^N \entVar_j^T \left[ \hat{B}_{jk} \left( \numnonconsD{\Jan}_{(j,k)} - \Jan_j \right) \right].
\end{align}
It is plain to see that the last term of \eqref{eq:EntBalance_Gauss_GLMMHD_other_vol} vanishes for LGL discretizations due to the diagonal boundary matrix and the consistency property of the surface numerical non-conservative term.
However, for Gauss discretizations, the last term is not guaranteed to vanish.

For the \textit{new} terms that connect all degrees of freedom with the left boundary we obtain
\begin{align} \label{eq:EntBalance_Gauss_GLMMHD_other_new}
(b)
	=& \sum_{j=0}^N \entVar_j^T \ell_{j}(-1) 
	\left[
   \state{f}^* \left( \state{u}_j , \tilde{\state{u}}_L \right)
 + {\Jan}^* \left( \state{u}_j , \tilde{\state{u}}_L \right)
 - \sum_{k=0}^N \ell_{k}(-1) \left( 
 	\state{f}^* \left( \tilde{\state{u}}_L , \state{u}_k \right)
   +{\Jan}^* \left( \tilde{\state{u}}_L , \state{u}_k \right) 
 \right)
 + \numfluxb{f}^a \left( \tilde{\state{u}}_L, \tilde{\state{u}}^{+}_L \right)
 \right]
\nonumber \\
	=& {\Psi}_L - \tilde{\Psi}_L - \sum_{j=0}^N \ell_{j}(-1) \tilde{r}_{(j,L)}
	+ \sum_{j=0}^N \ell_j (-1)
	\left[
		\entVar_j^T \left( \numnonconsD{\Jan} \left( \state{u}_j, \tilde{\state{u}}_L \right) - \Jan_j \right)
	 -  \tilde{\entVar}_L^T \left( \numnonconsD{\Jan} \left( \tilde{\state{u}}_L, \state{u}_j \right) - \Jan \left( \tilde{\state{u}}_L \right) \right)
	\right].
\end{align}
Again, the last sum of \eqref{eq:EntBalance_Gauss_GLMMHD_other_new} vanishes for LGL discretizations due to the existence of nodes at the boundaries, but not for Gauss.

The entropy balance of the left boundary term is exactly the same as in the proof of Lemma \ref{lemma:Entropy_Gauss_DGSEM_GLMMHD},
\begin{equation}
(c) = \sum_{j=0}^N \entVar_j^T \ell_{j}(-1)
\left[
 \numfluxb{f}^a \left( \tilde{\state{u}}_L, \tilde{\state{u}}^{+}_L \right)
+\numnonconsD{\Jan} \left( \tilde{\state{u}}_L, \tilde{\state{u}}^{+}_L \right)
\right]
= \tilde{\entVar}_L^T 
\left[
 \numfluxb{f}^a \left( \tilde{\state{u}}_L, \tilde{\state{u}}^{+}_L \right)
+\numnonconsD{\Jan} \left( \tilde{\state{u}}_L, \tilde{\state{u}}^{+}_L \right)
\right].
\end{equation}

As in the previous proofs, terms $(d)$ and $(e)$ are analyzed in the same form as terms $(b)$ and $(c)$.
Gathering all contributions and manipulating the boundary terms we obtain
\begin{align}
\sum_{j=0}^N \omega_j J \dot{S}_j  =& -(a)+(b)+(c)-(d)-(e) 
\nonumber \\
=& 
\numflux{f}^S \left( \tilde{\state{u}}_L, \tilde{\state{u}}^{+}_L \right)
-\numflux{f}^S \left( \tilde{\state{u}}_R, \tilde{\state{u}}^{+}_R \right) 
+ \frac{1}{2} \left[
	\hat{r} \left( \tilde{\state{u}}_L, \tilde{\state{u}}^{+}_L \right)
  + \hat{r} \left( \tilde{\state{u}}_R, \tilde{\state{u}}^{+}_R \right)
\right]
\nonumber \\
&+ \frac{1}{2} \sum_{j,k=0}^N \hat{S}_{jk} r_{(j,k)}
+ \sum_{j=0}^N \left( \ell_{j}(+1) \tilde{r}_{(j,R)} - \ell_{j}(-1) \tilde{r}_{(j,L)} \right)
+ \dot{S}_{\text{Gauss}}.
\end{align}

The additional production term, $\dot{S}_{\text{Gauss}}$, gathers the entropy production of the discretization of the non-conservative terms in \eqref{eq:Gauss_DGSEM_GLMMHD_other},
\begin{align}
\dot{S}_{\text{Gauss}} = 
&
- \sum_{j=0}^N \ell_j (-1)
	\left[
		\entVar_j^T \numnonconsD{\Jan} \left( \state{u}_j, \tilde{\state{u}}_L \right) 
	 -  \tilde{\entVar}_L^T \left( \numnonconsD{\Jan} \left( \tilde{\state{u}}_L, \state{u}_j \right) - \Jan \left( \tilde{\state{u}}_L \right) \right)
	\right]
\nonumber\\
&
+ \sum_{j=0}^N \ell_j (+1)
	\left[
		\entVar_j^T \numnonconsD{\Jan} \left( \state{u}_j, \tilde{\state{u}}_R \right) 
	 -  \tilde{\entVar}_R^T \left( \numnonconsD{\Jan} \left( \tilde{\state{u}}_R, \state{u}_j \right) - \Jan \left( \tilde{\state{u}}_R \right) \right)
	\right]
+ \sum_{j,k=0}^N \entVar_j^T \hat{B}_{jk} \numnonconsD{\Jan}_{(j,k)},
\end{align}
where some of the additional non-zero terms of $(a)$ cancel out with additional non-zero terms of $(b)$ and $(d)$.
Clearly, $\dot{S}_{\text{Gauss}}$ vanishes in LGL discretizations, but not necessarily in Gauss discretizations.

\section{Hybridized SBP formulation}

In this section, we describe the formulation of the entropy stable Gauss collocation scheme in one dimension using ``hybridized'' summation-by-parts operators \cite{chen2020review}. Formulations using hybridized operators are similar to formulations using traditional SBP operators, but are equivalent to the formulation (\ref{eq:Gauss_DGSEM_cons}) which introduce additional surface terms. 

First, we define the matrix $\mat{V}_h$ 
\[
\mat{V}_h = \begin{bmatrix}
\mat{I}\\
\mat{V}_f
\end{bmatrix} \in \mathbb{R}^{N+3, N+1}
\]
Multiplication by $\mat{V}_h$ maps nodal values at Gauss nodes to the vector containing nodal values at \textit{both} Gauss nodes and face nodes. Next, we introduce the hybridized SBP operator
\[
\mat{Q}_h = \frac{1}{2}\begin{bmatrix}
\mat{S} & \mat{V}_f^T\mat{B}\\
-\mat{B}\,\mat{V}_f & \mat{B}
\end{bmatrix}
\]
where we have used the skew-symmetric matrix $\mat{\hat{S}} = 2\mat{Q}-\mat{V}_f^T\mat{B}\,\mat{V}_f = \mat{Q}-\mat{Q}^T$. The operator $\mat{Q}_h$ satisfies a generalized SBP property and can be used to construct degree $N$ approximations to derivatives \cite{chan2019efficient}. 

We now introduce the skew-symmetric hybridized matrix $\mat{\hat{S}}_h = \mat{Q}_h - \mat{Q}_h^T$. Then, (\ref{eq:Gauss_DGSEM_GLMMHD}) can equivalently be formulated as
\begin{empheq}[box=\fbox]{align}
J \omega_j \dot{\state{u}}_j 
+ \sum_{k=0}^{N+2} 2 {V}_{h, kj} & \state{r}_{h,k}
+ \sum_{k=0}^N \left({V}_f\right)_{kj} \state{s}_k
= \state{0},  \label{eq:Gauss_hybridized_noncons}
\end{empheq}
where $\state{r}_{h,k}, \state{s}_k$ are volume and surface contributions computed in terms of the hybridized SBP operator 
\begin{empheq}[box=\fbox]{align}
\state{r}_{h,j} = \sum_{k=0}^{N+2} {\hat{S}}_{h, jk} \left(\state{f}^{*}_{(j,k)} + \numnonconsD{\Jan}_{(j,k)} \right), \qquad
\state{s}_j = \delta_{jN}
\left( \numfluxb{f}^a_{(N,R)} + \numnonconsD{\Jan}_{(N,R)} \right)
-\delta_{j0}\left( \numfluxb{f}^a_{(0,L)} + \numnonconsD{\Jan}_{(0,L)}  \right).
\end{empheq}
Note that, in contrast to (\ref{eq:Gauss_DGSEM_GLMMHD}), the formulation (\ref{eq:Gauss_hybridized_noncons}) combines the additional local surface terms into the definition of the volume operator. The resulting contributions $\state{r}_h$ and $\state{s}$ more closely resemble the simpler LGL formulation (\ref{eq:LGL_DGSEM_GLMMHD_cleanedUp}).

To show equivalence, we first note that since the entries of $(\mat{V}_f)_{1j}$ and $(\mat{V}_f)_{2j}$ are $\ell_j(-1)$ and $\ell_j(+1)$, the surface term $\sum_{k=0}^N \left({V}_f\right)_{kj} \state{s}_k$ can be rewritten as
\begin{equation}
-\ell_j(-1) \left( \numfluxb{f}^a_{(0,L)} + \numnonconsD{\Jan}_{(0,L)}  \right)
+ \ell_j(+1)\left( \numfluxb{f}^a_{(N,R)} + \numnonconsD{\Jan}_{(N,R)} \right).
\label{eq:hybridized_1}
\end{equation}
Next, we split the since the contribution $\sum_{k=0}^{N+2} 2 {V}_{h, kj} \state{r}_{h,k}$ into two parts. Note this can be written equivalently as a matrix-vector product
\begin{equation*}
\mat{V}_h^T \state{r}_h = \begin{bmatrix}
\mat{I} \\
\mat{V}_f
\end{bmatrix}^T \begin{bmatrix}
\state{r}\\
\state{r}_f
\end{bmatrix} = \state{r} + \mat{V}_f^T\state{r}_f.
\end{equation*}
upper left block of $\mat{\hat{S}}_h$ is the skew symmetric part of the generalized SBP operator $\mat{\hat{S}}$, one can write the entries of the contribution $\state{r}_j$ as
\begin{equation}
\sum_{k=0}^N \hat{S}_{jk} \left(\state{f}^{*}_{(j,k)} + \numnonconsD{\Jan}_{(j,k)} \right) - \ell_j(-1)\left(\state{f}^{*}_{(j,k)} + \numnonconsD{\Jan}_{(j,k)}\right) + \ell_j(+1)\left(\state{f}^{*}_{(j,k)} + \numnonconsD{\Jan}_{(j,k)}\right).
\label{eq:hybridized_2}
\end{equation}
Finally, we consider the contribution $\mat{V}_f^T\state{r}_f$. The term $\state{r}_f$ is equivalent to 
\begin{align*}
\state{r}_{f,j} &= \sum_{k=0}^N -\delta_{j0}\ell_k(-1)\left(\state{f}^{*}_{(L,k)} + \numnonconsD{\Jan}_{(L,k)}\right) + \delta_{jN}\ell_k(+1)\left(\state{f}^{*}_{(R,k)} + \numnonconsD{\Jan}_{(R,k)}\right) \\
&=\sum_{k=0}^N -\delta_{j0}\ell_k(-1)\left(\state{f}^{*}(\tilde{\state{u}}_L,\state{u}_k) + \numnonconsD{\Jan}(\tilde{\state{u}}_L,\state{u}_k)\right) + \delta_{jN}\ell_k(+1)\left(\state{f}^{*}(\tilde{\state{u}}_R,\state{u}_k) + \numnonconsD{\Jan}(\tilde{\state{u}}_R,\state{u}_k)\right) 
\end{align*}
Multiplication by $\mat{V}_f^T$ then yields 
\begin{equation}
\mat{V}_f^T\state{r}_{f} = -\ell_j(-1)\left(\sum_{k=0}^N \ell_k(-1)\left(\state{f}^{*}(\tilde{\state{u}}_L,\state{u}_k) +
\numnonconsD{\Jan}(\tilde{\state{u}}_L,\state{u}_k)\right)\right) + 
\ell_j(+1) \left(\sum_{k=0}^N\ell_k(+1)\left(\state{f}^{*}(\tilde{\state{u}}_R,\state{u}_k) + \numnonconsD{\Jan}(\tilde{\state{u}}_R, \state{u}_k)\right)\right).
\label{eq:hybridized_3}
\end{equation}
Combining (\ref{eq:hybridized_1}), (\ref{eq:hybridized_2}), and (\ref{eq:hybridized_3}) then recovers the Gauss-DGSEM formulation for GLM-MHD (\ref{eq:Gauss_DGSEM_GLMMHD}). 


%
%

\end{document}